 \def\smallsection#1{\smallskip\noindent\textbf{#1}.}
\def\?[#1]{\textbf{[#1]}\marginpar{\Large{\textbf{??}}}}
\def\smallsection#1{\smallskip\noindent\textbf{#1}.}
\numberwithin{equation}{section}
\title[$L^2$ linear stability of the steady state for the BGK equation on the torus]{Properties of Non-Equilibrium Steady States for the non-linear BGK equation on the torus}
\author{Josephine Evans}
\address{Warwick Mathematics Institute, Zeeman Building, University of Warwick, CV4 7AL}
\email{Josephine.Evans@warwick.ac.uk}
\author{Angeliki Menegaki}
\address{Institut des hautes études Scientifiques, 35 Rte de Chartres, 91440, Bures-sur-Yvette, France}
\email{menegaki@ihes.fr}
\keywords{Non-Equilibrium steady state, BGK model with spatial varying temperature, Existence Uniqueness and $L^2$ stability results, heat transfer}
\newtheorem{thm}{Theorem}
\newtheorem{defn}{Definition}
\newtheorem{lemma}{Lemma}
\newtheorem{prop}{Proposition}
\newtheorem{remark}{Remark}
\begin{document}
\maketitle
\begin{abstract}
\noindent We study the non-linear BGK model in 1d coupled to a spatially varying thermostat. We show existence, local uniqueness and linear stability of a steady state when the linear coupling term is large compared to the non-linear self interaction term. This model possesses a non-explicit spatially dependent non-equilibrium steady state. We are able to successfully use hypocoercivity theory in this case to prove that the linearised operator around this steady state poseses a spectral gap.
\end{abstract}
\tableofcontents

\section{Introduction}

Properties of non-equilibrium steady state (NESS) of systems in contact with thermal reservoirs (thermostats), such as existence, uniqueness, structure as well as speed of convergence towards them continue to be open major problem in statistical mechanics. 

There are a few open classical systems in a microscopical level introduced and study in order to understand -derive- the macroscopic Fourier's law of heat conduction from  microscopic Hamiltonian dynamics and for which there are some partial answers. A prototypical such example is the oscillator atom models with nearest neighbour interactions, perturbed at the boundaries with two reservoirs at different temperatures.  
Harmonic crystals is a case well studied where the non-equilibrium steady state is explicit but it corresponds to a rather unphysical scenario where Fourier's law breaks down \cite{RLL67}. 
Regarding anharmonic such crystals, even though there are partial answers regarding the non-equilibrium steady state that arises for certain cases -existence, uniqueness and exponential relaxation are ensured even quantitatively in the number of particles in certain cases- very little is known on the structure of such states. 
 Apart from microscopic oscillator chains, existence and uniqueness  is provided for a system of Newtonian particles with a long-range repulsive interaction potential \cite{GoldsteinKipnisIaniro85}. Again the situation is far from well-understood regarding a \emph{canonical} description of the state.

The same is true for systems in the mesoscopic level, i.e kinetic models described by a one-particle distribution function $f(t,x,v)$ where $t, x \in \Omega \subset \mathbb{R}^d, v \in \mathbb{R}$ are the time, space and velocity variables respectively. In particular there are no explicit solutions of the stationary (time-independent) Boltzmann equation in the more realistic non-equilibrium scenario and in many cases even the existence of such states is not known. 

In this article we study the non-linear BGK model of the Boltzmann’s equation, which is a kinetic relaxation model introduced by Bhatnagar, Gross and Krook in \cite{BGK54} as a toy model for Boltzmann flows. We continue our investigation as in \cite{EM21}, where we showed existence of a NESS for the one-dimensional model on the interval $(0,1)$ with diffusive boundary conditions playing the role of two thermal reservoirs at the boundaries.  

Here we change the boundary conditions to periodic boundaries, meaning that we consider a gas of
particles on the one-dimensional torus $\mathbb{T}$, 
 and our purpose is to study existence, uniqueness and $L_{x,v}^2$ stability for the non-linear BGK model coupled to a linear BGK operator with spatially varying temperature - being now a thermostat that acts all over the space. 

\subsection{Description of the model} We consider a gas of particles on the one-dimensional torus $\mathbb{T}$ where the collisions among the particles are represented by the nonlinear BGK operator.
\begin{equation}\label{eq:NL_BGK}
\partial_t f +v \partial_x f = \mathcal{L}f = \frac{1}{\kappa} \left( \rho_f\left(\alpha \mathcal{M}_{u_f,T_f} +(1-\alpha) \mathcal{M}_{\tau(x)}\right) - f \right), 
\end{equation} 
where $x \in \mathbb{T}, v \in \mathbb{R}$, $\tau(x)$ is a fixed function which varies with $x$ and $\alpha \in [0,1]$. Also with $\kappa$ we denote the Knudsen number, i.e.  the ratio between the mean free path and the typical observation length.

For a stationary (time-independent) solution $f(x,v)$ on the phase space  $\mathbb{T}\times \mathbb{R}$ we define the hydrodynamic moments, the spatial density $\rho_f(x)$, the bulk velocity $u_f(x)$ and the pressure $P_f(x)$ respectively as  
\begin{equation}
\begin{split}
\rho_f(x) &= \int_{-\infty}^\infty f(x,v) \mathrm{d}v, \quad \rho_f(x)u_f(x) = \int_{-\infty}^\infty v f(x,v)\mathrm{d}v  ,\quad\\ &  P_f(x) = \int_{-\infty}^\infty v^2 f(x,v) \mathrm{d}v = \rho_f(x) [T_f(x)+ u_f(x)^2 ] 
\end{split}
\end{equation}
 and then the local temperature profile corresponding to $f$ is $T_f$. 
We denote by $\mathcal{M}_{u_f, T_f}(v)$ the Maxwellian with temperature $T_f$:
\[ \mathcal{M}_{u_f,T_f}(v) = (2\pi T_f)^{-1/2} \exp \left(-\frac{|v-u_f|^2}{2T_f} \right). \] 

Our main objectives are (i) to determine the range of the parameters $\alpha, \kappa,$ and $\underline{\tau}\leq \tau \leq \bar{\tau} $ for which we have existence of a non-equilibrium stationary state $g(x,v)$ to \eqref{eq:NL_BGK}, (ii) show uniqueness of such a state for small $\alpha$'s and (iii) prove $L^2$ linear stability around $g$ via adaptations of hypocoercivity methods in this non-equilibrium scenario. 


\subsubsection{Motivation and state of the art} The main objective of our study is to understand fundamental properties -such as existence, uniqueness and possible stability- of non-equilibriums steady states in particle systems in kinetic theory. 
There are many works in this direction both for BGK and Boltzmann equations.\\
\noindent
\emph{On BGK models}:
There has been a lot of work done in this direction when one considers Boltzmann-type of models with several thermostats acting either at the boundaries or in the whole space. In particular in \cite{CELMM18, CELMM19}, the authors study the non-linear BGK model on the torus with periodic boundary conditions when scatterers at two different temperatures are imposed. In the cases covered there, one can find an explicit steady state which is spatially homogeneous, which is also unique. 

However our linear thermostat is spatially non-uniform this means our non-equilibrium steady state is non-explicit, spatially inhomogeneous and there is complex behaviour of the hydrodynamic quantities.

In \cite{EM21} we started studying as well the non-linear BGK equation on the domain $(0,1)$ when the boundary conditions are diffusive, meaning that whenever a particle hits one of the boundaries it is reflected back in the domain and its velocity is "thermalised" according to the temperature of the boundary. We study the case where the boundary temperatures are sufficiently away from each other, which is an extreme non-equilibrium regime -for a fixed Knudsen number-  and we showed existence of a steady state. A very similar scenario for the non-linear quadratic BGK model with large boundary data was studied in \cite{U92} providing also of existence, but the prescription of the boundary data is different.

\noindent
\emph{On Boltzmann models}: For the Boltzmann equation on bounded domains out-of-equilibrium there are many contributions in the perturbative case around the equilibrium, that is when the boundary temperatures are close to the equilibrium at some uniform temperature. In \cite{EGKM13} the authors constructed a unique steady state in the kinetic regime, i.e. finite Knudsen number, in the neighbourhood of the equilibrium and proved dynamical stability, generalising and expanding existence and uniqueness results in \cite{Guiraud72,Guiraud70} on convex domains.  
Existence and stability results on the NESS of Boltzmann equation expanding around small Knudsen number include \cite{A00, AEMN10, AEMN12}.
Moreover, existence of non-equilibrium steady states to the Boltzmann equation in the slab under diffuse reflection boundary conditions in a non-perturbative setting was proven in \cite{AN00}. 

Regarding spatially homogeneous Boltzmann in the presence of scatterers and of Kac's toy model -existence, uniqueness and exponential relaxation towards the NESS can be found in \cite{CLM15} and in \cite{E16}, respectively.

 In the context of deriving Fourier's law, i.e. a heat conduction law stating that the heat flux vector is proportional to the gradient of the temperature with the proportionality to be the thermal conductivity of the material, one needs to let the Knudsen number to go to $0$. Allowing thus a large number of collisions per unit time and establishing a hydrodynamic regime. 
Historically that was obtained formally by Boltzmann and Maxwell \cite{Maxwell1867,Boltzmann03} and a rigorous proof was given in \cite{ELM94,ELM95} in the slab geometry, in the close to equilibrium case. The question of deriving Fourier's law from a deterministic particle system remains a major open question in statistical mechanics \cite{BLR00, Lep16, Dhar08, BF19} and recent works include studies of harmonic as well as anharmonic oscillators in contact with thermal reservoirs at the boundaries \cite{RLL67, EPR99b, RBT02, Car07, CEHRB18, Hair09, HM09, Me20, BM20, BM23}, where existence, uniqueness of a NESS was provided for a large class of interaction potentials, exponential relaxation towards it, and in some more specific cases quantitative convergence rates as a function of the number of particles.  Fourier's law has been showed to hold also in cases of harmonic atom chains perturbed by a conservative stochastic dynamics as considered in \cite{BO05}. 
 
For a collection of the recent works on the stationary Boltzmann equation of bounded general domains on the perturbative close to equilibrium regime for both finite and close to $0$ Knudsen number, we suggest the recent survey \cite{EspositoMarra20}.

\subsection{Notation} We  write $A \lesssim B$ in order to say that $A \leq C B$ for some finite constant $C$ independent of $\alpha$. We also write $f(x) = \mathcal{O}(g(x))$ to denote that there is a constant $C>0$ such that $|f(x)| \leq C |g(x)|$ and $f(z) = o(g(z))$ for $z\to z_0$ if there is for any $\varepsilon>0$  a neighbourhood $U_\varepsilon$ of $z_0$ such that $|f(z)|\leq \varepsilon |g(z)|$.  
By $L_{x,v}^2$ we denote the space with functions that are in $L^2$  for both space and velocity variables $x,v$.
Finally we write for simplification $\mathcal{M}_{T_f}(v)$ for $\mathcal{M}_{0,T_f}(v)$.

\subsection{Plan of the paper} In the rest of this section we state the main results.  In Section \ref{sec:Existence and Uniqueness} we prove the existence and uniqueness of the steady for the full non-linear problem. This is split into subsections with the basic steps of the proof: in subsec. \ref{subs:Exist_and_uniq_linear} we prove existence and uniqueness for the linear version of the model, in subsec. \ref{subsec:bounds on pressur density temper} we provide with upper and lower bounds on the moments of the NESS and the requirements for the fixed point theorem are proved in \ref{subsec:contraction mapping}.
 In Section \ref{sec:Linear stability} we prove $L^2_{x,v}$ stability of the linearised operator around the NESS, by first proving a microscopic coercivity, i.e. in $v$ variable, subsec. \ref{subsec: micro coerc},  a macroscopic coercivity, i.e. in $x$ variable, subsec. \ref{subsec: macro coerc}, and boundedness of certain operators, subsec. \ref{subsec:boundedness auxiliar oper}, introduced to modify the entropy and to run our hypocoericivity argument.

\subsubsection{Techniques and main results}
Our first main theorem gives existence and a qualified uniqueness result for a steady state.

\begin{thm}[Existence and local uniqueness] \label{thm:existence}
Suppose there exist constants $\underline{\tau}$ and $\bar{\tau}$ such that $\underline{\tau} \leq \tau(x) \leq \bar{\tau}$ for all $x \in \mathbb{T}.$ Then if $\underline{\tau}$ is sufficiently large and $\alpha$ is sufficently small (depending on $\underline{\tau}, \bar{\tau}$), there exists a steady state $g=g(x,v)$ to the non-linear equation \eqref{eq:NL_BGK}. Moreover there exist constants $\underline{T}$ and $\bar{T}$ depending on $\alpha, \underline{\tau}, \bar{\tau}$ such that the temperature profile corresponding to this steady state $T_g(x) = \int_{-\infty}^\infty v^2 g(x,v) \mathrm{d}v / \int_{-\infty}^\infty g(x,v) \mathrm{d}v$ satisfies
\[ \underline{T} \leq T_g(x) \leq \bar{T}. \] Finally $g$ is the unique steady state in the class of functions $g$ satisfying this constraint.
\end{thm}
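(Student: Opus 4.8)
The plan is to construct the steady state $g$ as the fixed point of a map that takes a candidate temperature profile (more precisely, a candidate set of hydrodynamic moments) to the moments of the solution of the linear problem obtained by freezing the Maxwellian $\mathcal{M}_{u_f,T_f}$ in the nonlinear term. Concretely, given a profile $(\rho, u, T)$ in a suitable closed bounded convex set $\mathcal{K}$ of functions on $\mathbb{T}$ satisfying $\underline{T}\le T(x)\le \bar T$ (and analogous bounds on $\rho, u$), one forms the \emph{linear} BGK equation
\[
v\partial_x f = \frac{1}{\kappa}\Big(\rho_f\big(\alpha \mathcal{M}_{u,T} + (1-\alpha)\mathcal{M}_{\tau(x)}\big) - f\Big),
\]
solves it for its stationary solution (using the existence/uniqueness for the linear model from subsection~\ref{subs:Exist_and_uniq_linear}), extracts the new moments $(\rho_f, u_f, T_f)$, and calls this map $\Phi$. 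A genuine steady state of \eqref{eq:NL_BGK} is exactly a fixed point of $\Phi$, since at a fixed point the frozen Maxwellian agrees with the true one.

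The core analytic input is the a~priori control of moments established in subsection~\ref{subsec:bounds on pressur density temper}: it is there that one shows that if $\underline\tau$ is large and $\alpha$ small, then $\Phi(\mathcal K)\subseteq \mathcal K$ for an appropriate choice of $\underline T, \bar T$ (and the bounds on $\rho$ and $u$). The heuristic is that for $\alpha$ small the linear thermostat term $(1-\alpha)\mathcal{M}_{\tau(x)}$ dominates, so the output temperature profile is a small perturbation of something pinned between $\underline\tau$ and $\bar\tau$; largeness of $\underline\tau$ is what keeps the Maxwellian tails and the resulting temperature integrals from degenerating, giving uniform two-sided bounds. I would then invoke the contraction estimate of subsection~\ref{subsec:contraction mapping}: for $\alpha$ sufficiently small (depending on $\underline\tau, \bar\tau$), $\Phi$ is a contraction on $\mathcal K$ in a suitable norm — the nonlinear self-interaction contributes a factor proportional to $\alpha$ while the linear part is fixed, so the Lipschitz constant is $O(\alpha)<1$. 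Banach's fixed point theorem then yields a unique fixed point $g$ inside $\mathcal K$, and the moment bounds give in particular $\underline T \le T_g(x)\le \bar T$. The final ``local uniqueness'' clause is precisely uniqueness of the fixed point of $\Phi$ within the class cut out by those temperature bounds, which is automatic from the contraction.

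Two steps deserve care. First, one must verify that $\Phi$ genuinely maps into a space on which the contraction norm is meaningful and that the moments $(\rho_f, u_f, T_f)$ are well-defined and regular enough — this requires pointwise (in $x$) positivity and finiteness bounds on $\rho_f$ and on $T_f$, i.e. non-vacuum and non-blow-up, which is exactly the role of the bounds from subsection~\ref{subsec:bounds on pressur density temper} and forces the hypotheses $\underline\tau$ large, $\alpha$ small. Second, the map $T \mapsto \mathcal{M}_{u,T}$ and then $\mathcal M_{u,T}\mapsto (\rho_f,u_f,T_f)$ must be shown Lipschitz with constants controlled uniformly over $\mathcal K$; the delicate point is that differentiating a Maxwellian in its temperature produces factors of $v^2/T^2$, so one needs the uniform lower bound $\underline T>0$ to keep these Lipschitz constants finite. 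I expect the main obstacle to be exactly this: closing the loop between the moment bounds and the contraction estimate with \emph{consistent} constants, i.e. choosing $\underline T, \bar T$ and the thresholds on $\underline\tau, \alpha$ so that invariance of $\mathcal K$ and the $O(\alpha)$ contraction hold simultaneously. Once the quantitative estimates of Sections~\ref{subsec:bounds on pressur density temper}--\ref{subsec:contraction mapping} are in place, the theorem follows by a direct application of the Banach fixed point theorem.
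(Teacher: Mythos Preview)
Your proposal is correct and follows essentially the same route as the paper: freeze the Maxwellian, solve the resulting linear problem (whose steady state exists and is unique by the Doeblin argument of subsection~\ref{subs:Exist_and_uniq_linear}), show the induced moment map is invariant on a closed set determined by $\underline{T},\bar{T}$ via the bounds of subsection~\ref{subsec:bounds on pressur density temper}, and then prove it contracts with Lipschitz constant $O(\alpha)$ as in subsection~\ref{subsec:contraction mapping}, concluding by Banach's fixed point theorem.

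The only substantive simplification you are missing relative to the paper is that on the torus the conservation laws force $u_g\equiv 0$ for any stationary solution of the linear problem (since $\partial_x(\rho_g u_g)=0$ and $\int v g\,\mathrm{d}v$ has zero mean), and the pressure $P_g$ is constant in $x$; consequently the paper runs the fixed point purely on the scalar profile $T\in C(\mathbb{T})$ via $\mathcal{F}(T)=P_{g^T}/\rho_{g^T}$, rather than on a triple $(\rho,u,T)$. This collapses your map $\Phi$ to a one-component object and removes the need to establish invariance or contraction in the $\rho$- and $u$-directions, which streamlines the estimates considerably.
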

This theorem is proved by a contraction mapping argument on the temperature. We first show that for a specific set of parameters $\alpha,\underline{\tau}, \bar{\tau}$, upper and lower bounds on the temperature, pressure and spatial density are preserved. Then in this set of parameters we show that the mapping on the temperature is in fact contractive. To construct the map we freeze the non-linearity in the equation to produce a linear equation which can be related to a Markov process. We the use this to show that linear equation has a unique steady state for which can define its temperature. This gives us a map from one temperature to another. Our estimates showing bounds on the temperatures rely on a representation of the steady state of this linear equation found through Duhamel's formula. This is an adaption of our techniques from \cite{EM21} where we performed a Schauder fixed point argument on a similar equation. 

\begin{thm}[Linear stability of the steady state] \label{thm:stability}
For $\underline{\tau}$ sufficiently large that the result of Theorem \ref{thm:existence} applies and $\alpha$ sufficiently small the linear equation found by linearizing \eqref{eq:NL_BGK} around $g$ has a spectral gap in $L^2(g^{-1})$.
\end{thm}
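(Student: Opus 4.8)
\emph{Proof proposal.} The plan is to run a hypocoercivity argument of Dolbeault--Mouhot--Schmeiser type, adapted to the fact that the ``equilibrium'' of the linearised flow is the non-explicit, spatially inhomogeneous state $g$ rather than a global Maxwellian. Writing $f=g+h$, let $\mathcal{L}=-v\partial_x+\mathcal{C}_g$ be the linearisation of \eqref{eq:NL_BGK} at $g$, where $\mathcal{C}_g$ is the linearised collision operator. Linearising $\rho_f\mathcal{M}_{u_f,T_f}$ at $g$ produces a projection-type operator $\mathsf P_g$ (rank three for each fixed $x$) onto the moments of $\mathcal{M}_{u_g,T_g}$, while the thermostat contributes only the density term $(1-\alpha)\rho_h\mathcal{M}_{\tau(x)}$; hence $\kappa\,\mathcal{C}_g h=\alpha\,\mathsf P_g h+(1-\alpha)\rho_h\mathcal{M}_{\tau}-h$. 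Since collisions conserve only local mass and transport conserves total mass, the only conserved quantity of the full flow is $\iint h\,\mathrm dx\,\mathrm dv$; moreover $\lambda\mapsto\lambda g$ is a curve of steady states of \eqref{eq:NL_BGK} (because $u_{\lambda g}=u_g$ and $T_{\lambda g}=T_g$), so differentiating at $\lambda=1$ gives $\mathcal{L}g=0$, and by the local uniqueness in Theorem~\ref{thm:existence} one expects $\ker\mathcal{L}=\mathrm{span}\{g\}$. The target is thus a coercivity estimate for $\mathcal{L}$ on the orthogonal complement of $g$ inside the zero-total-mass subspace of $L^2(g^{-1})$.

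Before the hypocoercivity proper I would record quantitative bounds on $g$: positivity together with two-sided Gaussian bounds (temperature between $\underline T$ and $\bar T$) and a bound on $\partial_x\log g$, all obtained from the Duhamel representation of the frozen linear problem used to prove Theorem~\ref{thm:existence}; in the regime $\underline\tau$ large, $\alpha$ small one should moreover get that $g$ is spatially \emph{nearly} homogeneous, so that $\|\partial_x\log g\|_{\infty}$ is small. This is the crucial input because $v\partial_x$ is \emph{not} skew-adjoint for the weight $g^{-1}$: integration by parts on $\mathbb{T}$ gives $\langle v\partial_x h,h\rangle_{g^{-1}}=\tfrac12\iint v\,\partial_x(\log g)\,h^2\,g^{-1}\,\mathrm dx\,\mathrm dv$, an error term with no textbook counterpart in the scheme, and its absorption will rely exactly on smallness of $\partial_x\log g$. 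Similarly $\mathcal{C}_g$ is not self-adjoint in $L^2(g^{-1})$ --- its kernel is spanned by $\mathcal{M}_\tau$-like profiles, not by $g$-moments --- but that defect is $O(\alpha)$.

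The hypocoercivity argument then proceeds in three steps. \emph{(i) Microscopic coercivity.} Let $\Pi$ be the $L^2(g^{-1})$-orthogonal projection onto $\ker\mathcal{C}_g$; for $\alpha$ small this is the space $\{\rho(x)\psi_g(x,v):\rho\in L^2(\mathbb T)\}$ with $\psi_g$ a velocity profile close to $\mathcal{M}_{\tau(x)}$ produced by a fixed point in $\alpha$. One shows $-\langle\mathcal{C}_g h,h\rangle_{g^{-1}}\ge(\lambda_m/\kappa)\|(I-\Pi)h\|_{g^{-1}}^2$ by checking that the operator $h\mapsto\alpha\mathsf P_g h+(1-\alpha)\rho_h\mathcal{M}_\tau$ is a contraction with a spectral gap on $(\ker\mathcal{C}_g)^\perp$; the $\alpha=0$ case is essentially the classical BGK estimate, the general case a perturbation. \emph{(ii) Macroscopic coercivity.} On $\ker\mathcal{C}_g$ one estimates $\|(I-\Pi)v\partial_x(\rho\psi_g)\|_{g^{-1}}^2\gtrsim\|\rho-\bar\rho\|_{L^2(\mathbb T)}^2$: the leading term $(\partial_x\rho)\,v\psi_g$ is essentially orthogonal to $\ker\mathcal{C}_g$ and produces $\|\partial_x\rho\|^2$, to which the Poincaré inequality on $\mathbb T$ applies after subtracting the conserved mean $\bar\rho$, while the remainder $\rho\,v\,\partial_x\psi_g$ is controlled by the bound on $\partial_x\psi_g$ (from regularity of $g$ and $\tau$). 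One also checks the macroscopic transport condition $\Pi\,v\partial_x\,\Pi=0$ (exact at $\alpha=0$, up to $O(\alpha)$ otherwise). \emph{(iii) Auxiliary operator bounds.} Introduce $A=\big(1+(v\partial_x\Pi)^*(v\partial_x\Pi)\big)^{-1}(v\partial_x\Pi)^*$ and verify that $A$, $A\,v\partial_x(I-\Pi)$, $\Pi\,v\partial_x A$ and $(I-\Pi)\,v\partial_x\,\Pi$ are bounded on $L^2(g^{-1})$; this reduces to Gaussian moment bounds on $\psi_g$ and its $x$-derivative. Finally, with the modified functional $\mathcal F[h]=\tfrac12\|h\|_{g^{-1}}^2+\varepsilon\langle Ah,h\rangle_{g^{-1}}$ one differentiates along the linearised flow, combines (i)--(iii) in the standard way, and absorbs the error term $\tfrac12\iint v\,\partial_x(\log g)\,h^2\,g^{-1}$ together with the $O(\alpha)$ defects using the bounds of Step~2; for $\varepsilon$ small and $\alpha$, $1/\underline\tau$ small enough this yields $\tfrac{\mathrm d}{\mathrm dt}\mathcal F[h]\le-\delta\,\mathcal F[h]$, hence the spectral gap in $L^2(g^{-1})$.

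The main obstacle I expect is the simultaneous failure of both building blocks of the scheme to be in textbook form --- $\mathcal{C}_g$ is not self-adjoint and $v\partial_x$ is not skew-adjoint in $L^2(g^{-1})$, because $g$ is genuinely inhomogeneous in $x$. Both defects are perturbative ($O(\alpha)$, respectively $O(\|\partial_x\log g\|_\infty)$), so the scheme should close, but making this quantitative --- propagating enough regularity of $g$ out of the fixed-point construction of Theorem~\ref{thm:existence}, and tracking the dependence of \emph{all} constants ($\lambda_m$, the Poincaré constant, the operator norms of Step~(iii) and $\varepsilon$) on $\alpha,\underline\tau,\bar\tau$ so that the final rate $\delta$ is strictly positive --- is where the real work lies.
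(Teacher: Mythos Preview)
Your proposal identifies the right difficulty --- that neither $v\partial_x$ nor the linearised collision operator is (skew-)symmetric in $L^2(g^{-1})$ --- but the fix you propose does not work in the regime of the theorem. You plan to absorb the defect $\tfrac12\iint v\,\partial_x(\log g)\,h^2 g^{-1}$ by arguing that $g$ is nearly spatially homogeneous and hence $\|\partial_x\log g\|_\infty$ is small. This is not available: the theorem allows $\tau(x)$ to vary by an $O(1)$ amount (only $\underline\tau$ is assumed large, not $\bar\tau-\underline\tau$ small), so even at $\alpha=0$ the steady state is genuinely $x$-dependent and $\partial_x\log g$ has no reason to be small. The paper in fact treats the small-variation case $|\bar\tau-\underline\tau|\ll 1$ as a separate, easier problem relegated to an appendix; the main theorem is precisely about the regime where your perturbative control fails. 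Your error term also carries an uncontrolled factor of $v$, so mere boundedness of $\partial_x\log g$ would not let you close the estimate either.

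The paper sidesteps this entirely by a different decomposition: rather than splitting $\mathfrak{L}$ into transport plus collision, it takes the \emph{exact} symmetric and antisymmetric parts $\mathcal{C}=\tfrac12(\mathfrak{L}+\mathfrak{L}^*)$ and $\mathcal{T}=\tfrac12(\mathfrak{L}-\mathfrak{L}^*)$ in $L^2(g^{-1})$, in the spirit of Calvez--Raoul--Schmeiser. Both pieces then annihilate $g$ exactly, the projection onto $\ker\mathcal{C}$ is simply $\Pi h=(\rho_h/\rho_g)\,g$, and one computes $\mathcal{T}\Pi h=v\,\partial_x(\rho_h/\rho_g)\,g$, so $\Pi\mathcal{T}\Pi=0$ holds \emph{exactly} (not up to $O(\alpha)$) because $u_g=0$. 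No perturbative control of $\partial_x\log g$ is ever needed, and the macroscopic coercivity and auxiliary-operator steps become clean. The price is that $\mathcal{C}$ is no longer a textbook BGK operator and microscopic coercivity becomes the hard step: one decomposes $h$ along the $L^2_v(g^{-1})$-orthonormal system built from $g,\,vg,\,(v^2/T_g-d_3 v/\sqrt{T_g}-1)g$ and tracks the defect terms, which are controlled not by smallness of $\partial_x g$ but by bounds on the normalised third and fourth moments $d_3,d_4$ of $g$, extracted from the Duhamel representation. Smallness of $\alpha$ enters only at the very end, to make $\alpha\,\varphi(d_3,d_4)<\tfrac{1-\alpha}{2}$.
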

This theorem is proved by adapting the $L^2$ hypocoercivity theorem of Dolbeault-Mouhot-Schmeiser \cite{DMS15}. The novelty here is that we push forward these techniques to an out-of-equilibrium system, meaning that we deal with a non-explicit, spatially inhomogeneous, steady state. Therefore we work on a functional space, $L^2(g^{-1})$, with non-explicit weights. The $L^2$ hypocoercivity theory relies on verifying assumptions. In our case the most challenging is the \emph{microscopic coercivity} assumption. Difficulties in verifying this assumption are related to the fact that the non-equilibrium nature of our dynamics mean there are complex behaviours relating to flow of macroscopic quantities which appear as potentially growing modes in the space spanned by $g, vg, v^2g$. We are able to control these terms when $\alpha$ is small. Doing this involves bounding the normalised third and fourth moments (skewness and kurtosis) of the steady state $g$ that we find.

\begin{remark} 
Both Theorems \ref{thm:existence} and \ref{thm:stability} require a finite Knudsen number $\kappa$ of order $\mathcal{O}(1)$, meaning that we stay on the kinetic - mesoscopic level, without passing to the hydrodynamic regime. We remark that this was the case also for the steady state we found in \cite{EM21} when the model is subject to diffusive boundary conditions at two different -sufficiently large- temperatures far away from each other. 
\end{remark}
\begin{remark}
Regarding the stability and uniqueness of the steady state that is provided here for small $\alpha$'s, let us remark that it is due to the thermostat at temperature $\tau(x)$ acting all over the space. This term is helping the system to be stabilised as it imposes constrains everywhere in the space. That comes in contrast with the case when the thermostats act just on the boundaries, as in \cite{EM21}. This means that the steady state found for the system studied in \cite{EM21} when the difference of the boundary temperatures is large, could be possibly unstable. 
\end{remark}

\subsubsection{Discussion and open problems}

There are several natural next steps from this work. For the model studied in the paper two natural next steps would be:
\begin{itemize}
\item Studying the non-linear stability of this equation by first utilizing the toolbox in \cite{GMM17} to prove weighted $L^\infty$ stability for the linearized equation;
\item Looking at the case of small Knudsen number ($\kappa$) where we expect that one should be able to use the fact that the steady state will be close to Maxwellian.
\end{itemize}
We comment that we could also look at the situation where $\tau$ does not vary very much. That is to say $|\bar{\tau}- \underline{\tau}|$ is small and we can weight by the Maxwellian at a fixed temperature. In fact we expect this case to be rather straightforward. We have included a brief proof of linear stability in the appendix \ref{appendix} under the assumption that the temperature of the steady state is close to a uniform temperature, close to $\bar{\tau}, \underline{\tau}$ in this case. We expect that this assumption could be verified by arguments similar to our contraction mapping argument to show that the temperature of the steady state is a Lipschitz function of $\tau(x)$ when $\tau$ is considered as a function in $L^\infty$. These would be more involved than the linear stability.

We would like to be able to perform a linear stability analysis similar to the one in this paper for the equation studied in \cite{EM21} with diffusive boundary conditions. For this equation it is not at all clear that the steady state found is stable when the difference between the boundary temperatures is large. This comment is also made in the related paper of Ukai \cite{U92}. Briefly, if we try and run an argument similar to the linear stability argument in this paper we are not able to verify the \emph{microscopic coercivity assumption} and we can in fact show that it is possible for the $L^2(g^{-1})$ norm to grow for certain initial data. This is caused by complex behaviour of the hydrodynamic quantities and these terms cannot be controlled by terms coming from the thermostat since it only acts on the boundary. 

\section{Existence and uniqueness}\label{sec:Existence and Uniqueness}
We consider the linear equation 
\begin{equation}\label{eq:LBGK}
 \partial_t f +v \partial_x f = Lf= \frac{1}{\kappa} \left( \rho_f\left(\alpha \mathcal{M}_{T(x)}(v) +(1-\alpha) \mathcal{M}_{\tau(x)}(v)\right) - f \right)
\end{equation}
and we write $g=g^T(x,v)$ to be the steady state of \eqref{eq:LBGK} corresponding to the temperature profile $T \in C(\mathbb{T})$. First we prove the existence and uniqueness of such a state through Doeblin's theorem.

\subsection{Doeblin's argument for existence and uniqueness of a steady state to the linear equation} \label{subs:Exist_and_uniq_linear}

We need to show the existence and uniqueness of a steady state for the linear PDE  \eqref{eq:LBGK}
  We first construct a stochastic process the law of which is a weak solution of the PDE \eqref{eq:LBGK}. Then using Doeblin's theorem from Markov process theory we show existence and uniqueness of the steady state.

To construct the stochastic process first let us generate a Poisson proces s with rate $\frac{1}{\kappa}$ and call $J_1, J_2, J_3, \dots$ the jump times of the Poisson process. In order to construct the stochastic process $X_t, V_t$ we proceed iteratively: Suppose we have it up to time $J_i$, then  for 
$$J_i < t \leq J_{i+1},\ \text{ we set } X_t = X_{J_i}+(t-J_i)V_{J_i}$$ and  
$$
J_i < t < J_{i+1},\ V_t = V_{J_i}, 
$$
while for $t= J_{i+1}$ we generate a new velocity independent of everything except $X_{J_{i+1}}$. It is straightforward to check that this gives the right equation: By taking a test function $\phi$ on $C_c^{\infty}((0,\infty)\times \mathbb{T} \times \mathbb{R})$, Taylor expanding the quantity $s^{-1}\mathbb{E}\left( \phi(t+s,X_{t+s}, V_{t+s}) - \phi(t,X_{t}, V_{t}) \right)$ and taking the limit as $s \to 0$ we recover indeed a weak solution of our PDE \eqref{eq:LBGK}.

Before applying Doeblin's theorem, let us first give a general statement of it. 
\begin{thm}[Doeblin's theorem]
Let $Z_t$ be a continuous time Markov process on a state space $\mathcal{Z}$. Let us write $f_t^{z_0}$ for the law of $Z_t$ conditional on $Z_0  = z_0$. Suppose that there exists a time $t_*>0$ a constant $\beta \in (0,1)$ and a probability measure $\nu$ on $\mathcal{Z}$ so that for any $z_0 \in \mathcal{Z}$ we have 
\[ f_{t_*}^{z_0} \geq \beta \nu, \] then the Markov process has a unique steady state.
\end{thm}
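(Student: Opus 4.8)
The plan is to run the classical Doeblin argument: show that the time-$t_*$ transition operator is a strict contraction on the space of probability measures in total variation distance, apply the Banach fixed point theorem to obtain a unique $P_{t_*}$-invariant measure, and then upgrade $P_{t_*}$-invariance to invariance under the whole semigroup by a commutation argument.

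First I would fix notation. Write $(P_t)_{t\geq 0}$ for the Markov semigroup associated with $Z_t$, acting on the space $\mathcal{P}(\mathcal{Z})$ of probability measures by $P_t\mu = \int_{\mathcal{Z}} f_t^{z_0}\,\mathrm{d}\mu(z_0)$, so that $P_t\delta_{z_0} = f_t^{z_0}$ and $P_{t+s} = P_t P_s$; extend $P_t$ linearly to finite signed measures. Recall that the space of finite signed measures with the total variation norm $\|\cdot\|_{TV}$ is a Banach space and that $\mathcal{P}(\mathcal{Z})$ is a closed (hence complete) subset of it.

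The key step is the contraction estimate: for all $\mu,\eta\in\mathcal{P}(\mathcal{Z})$,
\[
\|P_{t_*}\mu - P_{t_*}\eta\|_{TV} \leq (1-\beta)\,\|\mu-\eta\|_{TV}.
\]
To prove it, set $\xi = \mu-\eta$, a signed measure with $\xi(\mathcal{Z}) = 0$, and take its Hahn--Jordan decomposition $\xi = \xi_+ - \xi_-$, so that $\xi_\pm\geq 0$ and $\xi_+(\mathcal{Z}) = \xi_-(\mathcal{Z}) = \tfrac12\|\xi\|_{TV} =: m$. The minorization hypothesis says that for every $z_0$ the measure $f_{t_*}^{z_0} - \beta\nu$ is nonnegative; integrating against $\xi_\pm$ shows $P_{t_*}\xi_\pm - m\beta\nu\geq 0$, and each of these nonnegative measures has total mass $m(1-\beta)$. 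Since $P_{t_*}\xi = (P_{t_*}\xi_+ - m\beta\nu) - (P_{t_*}\xi_- - m\beta\nu)$ is a difference of two nonnegative measures, the triangle inequality gives $\|P_{t_*}\xi\|_{TV} \leq 2m(1-\beta) = (1-\beta)\|\xi\|_{TV}$. (Equivalently, this is a coupling statement: with probability at least $\beta$ two copies of the process started from $z_0$ and $z_0'$ can be made to agree at time $t_*$.)

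With the contraction in hand, the Banach fixed point theorem applied to $P_{t_*}\colon\mathcal{P}(\mathcal{Z})\to\mathcal{P}(\mathcal{Z})$ yields a unique probability measure $\mu_*$ with $P_{t_*}\mu_* = \mu_*$, and moreover $\|P_{t_*}^n\mu - \mu_*\|_{TV}\to 0$ for every $\mu$. To see that $\mu_*$ is invariant for the whole semigroup, fix $s\geq 0$ and note $P_{t_*}(P_s\mu_*) = P_s(P_{t_*}\mu_*) = P_s\mu_*$, so $P_s\mu_*$ is a fixed point of $P_{t_*}$ and hence equals $\mu_*$ by uniqueness; thus $P_s\mu_* = \mu_*$ for all $s\geq 0$. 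Conversely, any steady state is in particular fixed by $P_{t_*}$, so it coincides with $\mu_*$, giving uniqueness. The only genuinely delicate point is the contraction estimate — keeping the Hahn--Jordan bookkeeping straight and invoking the minorization at the right place; the rest (completeness of the metric space, the semigroup commutation) is soft.
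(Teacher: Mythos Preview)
Your proof is correct and is the standard argument for Doeblin's theorem: the minorization yields a strict total-variation contraction for $P_{t_*}$, Banach's fixed point theorem gives a unique $P_{t_*}$-invariant probability measure, and commutation with the semigroup upgrades this to full invariance. The Hahn--Jordan bookkeeping and the subtraction of the common mass $m\beta\nu$ are carried out cleanly.

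There is nothing to compare against, however: the paper does not prove this theorem. It is quoted as a classical result from Markov process theory (``Before applying Doeblin's theorem, let us first give a general statement of it'') and then invoked as a black box. The paper's actual work in this section is the subsequent proposition, where they verify the minorization hypothesis $f_{t_*}^{z_0}\geq\beta\nu$ for the specific linear BGK dynamics via iterated Duhamel bounds. So your write-up supplies a proof the paper deliberately omits; it is correct but not part of the paper's contribution.
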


In order to prove the conditions of Doeblin's theorem for our particular Markov process, the most convenient way is to look at the PDE form of the equation.
\begin{prop}
For the linear equation with $f_0 = \delta_{x_0,v_0}$ we have $\beta \in (0,1)$ and $\nu$ a probability density such that \[ f_{t} \geq \beta \nu.\]
\end{prop}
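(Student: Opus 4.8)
The plan is to read off the Doeblin minorization directly from the trajectory representation of $f_t$ constructed above (equivalently, from two iterations of Duhamel's formula for \eqref{eq:LBGK}). Concretely, at a fixed time $t_*>0$ I will bound $f_{t_*}^{(x_0,v_0)}$ from below by a fixed multiple of a fixed Gaussian in $v$, uniformly in the starting point $(x_0,v_0)$ and in the position $x$; the mechanism is a positive–measure bundle of trajectories in which exactly two collisions occur and whose resampled velocities are ``steered'' so as to place the particle at a prescribed point of $\mathbb{T}\times\mathbb{R}$ at time $t_*$. Two preliminary observations will be used: (i) the velocity resampled at a jump occurring at position $y\in\mathbb{T}$ has density $\Phi(y,\cdot):=\alpha\mathcal{M}_{T(y)}+(1-\alpha)\mathcal{M}_{\tau(y)}$, and since $\underline\tau\le\tau(x)\le\bar\tau$ this satisfies the uniform pointwise bound $\Phi(y,v)\ge(1-\alpha)(2\pi\bar\tau)^{-1/2}\exp(-v^2/(2\underline\tau))=:\psi(v)$ for every $y$ and every $v\in\mathbb{R}$, with $\psi$ a fixed strictly positive function; (ii) conditional on exactly $k$ jumps occurring in $[0,t_*]$ at ordered times $s_1<\dots<s_k$ and none in $(s_k,t_*]$, the joint density of the jump times is the constant $\kappa^{-k}e^{-t_*/\kappa}$ on the corresponding simplex.

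I would take $t_*=\kappa$ and work on the event of exactly two jumps. If these occur at times $s_1<s_2$ in $(0,t_*)$ with post-jump velocities $v_1$ and then $v_2$, then $X_{s_1}=x_0+s_1 v_0$, $X_{s_2}=X_{s_1}+(s_2-s_1)v_1$ and $X_{t_*}=X_{s_2}+(t_*-s_2)v_2$ on $\mathbb{T}$, while $V_{t_*}=v_2$. Fix a target $(x,v)$, put $v_2=v$, and solve the torus equation $X_{t_*}=x$ for $v_1$: its solutions form an arithmetic progression of step $(s_2-s_1)^{-1}$, so with $V_1:=2/t_*$ and $s_2-s_1\ge1/V_1$ there is at least one solution with $|v_1|\le V_1$, and the number of such solutions is $\asymp V_1(s_2-s_1)$. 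Integrating the law of $(X_{t_*},V_{t_*})$ on this event against the position constraint — using (ii) for the jump-time density and $\Phi\ge\psi$ for the two resampling densities — each admissible $v_1$ contributes a Jacobian factor $(s_2-s_1)^{-1}$ which cancels against the solution count, leaving
\[
 f_{t_*}^{(x_0,v_0)}(x,v)\ \ge\ \frac{e^{-t_*/\kappa}}{\kappa^2}\,\psi(V_1)\,\psi(v)\,V_1\,\big|\{\,0<s_1<s_2<t_*:\ s_2-s_1\ge 1/V_1\,\}\big|\ =:\ c_0\,\psi(v),
\]
where the set has strictly positive measure because $t_*>1/V_1$, and $c_0>0$ is an explicit constant depending only on $\kappa,\alpha,\underline\tau,\bar\tau$ (in particular not on $T$). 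This bound is uniform in $(x_0,v_0)$ and $x$ and holds for every $v\in\mathbb{R}$.

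To conclude, set $\nu(\mathrm{d}x\,\mathrm{d}v)=(2\pi\underline\tau)^{-1/2}\exp(-v^2/(2\underline\tau))\,\mathrm{d}x\,\mathrm{d}v$, a probability density on $\mathbb{T}\times\mathbb{R}$, and absorb constants into $\beta$ so that $c_0\psi(v)=\beta\,\nu(x,v)$; then $f_{t_*}^{(x_0,v_0)}\ge\beta\nu$ for every $(x_0,v_0)$, which is the claim. Positivity $\beta>0$ is immediate, and $\beta<1$ follows by integrating the inequality ($1=\int f_{t_*}^{z_0}\ge\beta$) together with the observation that equality would force $f_{t_*}^{z_0}=\nu$, whereas $f_{t_*}^{z_0}$ is not absolutely continuous — it retains the atom $e^{-t_*/\kappa}\delta_{(x_0+t_*v_0,\,v_0)}$ coming from the no-jump event.

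The main obstacle, and the reason for using two collisions rather than one, is the non-compactness of the velocity variable together with the shear structure of free transport. After a single jump at time $s$ with fresh velocity $v$ the particle sits at $x_0+s v_0+(t_*-s)v$, and as $s$ ranges over $(0,t_*)$ this traces an arc of length $t_*|v_0-v|$ on $\mathbb{T}$, which fails to cover the torus when $|v-v_0|<1/t_*$; since $v_0$ is arbitrary one cannot delete a fixed ``bad'' velocity window, so no $v_0$-independent $\nu$ is produced this way. With two jumps the intermediate velocity $v_1$ can steer the particle to any point of $\mathbb{T}$, and the Jacobian degeneracy $(s_2-s_1)^{-1}$ is exactly offset by the number of winding solutions — the same device as adding one step in Hörmander-type minorization arguments.
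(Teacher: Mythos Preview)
Your argument is correct and is essentially the same two-collision Duhamel/trajectory argument as the paper's: both lower-bound the resampling kernel uniformly in position, use the intermediate velocity to steer the particle anywhere on $\mathbb{T}$ once the inter-jump time is large enough, and integrate over a positive-measure set of jump times. The only cosmetic differences are that the paper bounds $\alpha\mathcal{M}_{T}+(1-\alpha)\mathcal{M}_\tau\ge c\,1_{|v|\le1}$ (yielding a compactly supported $\nu$) whereas you keep a Gaussian tail, and that the paper writes the iteration in semigroup notation $S_{t-s}P_cS_{s-r}P_cS_r\delta_{(x_0,v_0)}$ rather than in the probabilistic jump-time picture; your explicit discussion of the Jacobian/winding cancellation and of why a single jump fails is a nice addition but not a different idea.
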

\begin{proof}
First since $\tau, T$ are bounded above and below we can find $c>0$ so that
\[ \alpha \mathcal{M}_{T(x)}(v) + (1-\alpha) \mathcal{M}_{\tau(x)}(v) \geq c1_{|v| \leq 1} \]  uniformly in $x$. Then Duhamel's formula yields the lower bound
\begin{equation} 
\begin{split}
 f(t,x,v) \geq e^{-t/\kappa} f(0, x-vt, v) +& \int_0^t e^{-(t-s)/\kappa} \int f(s, x-v(t-s), u) \mathrm{d}u\times \\  &\times \left( \alpha \mathcal{M}_{T(x-v(t-s))}(v) + (1-\alpha) \mathcal{M}_{\tau(x-v(t-s))}(v) \right)\mathrm{d}s. 
 \end{split} 
 \end{equation}
Then using the notation $(S_tf)(x,v) = f(x-tv, v)$, i.e. the semigroup of the transport part, and 
$P_c f := c 1_{|v| \leq 1}\int_{\mathbb{R}} f(x,u)\mathrm{d}u $ we write
\[ f(t,x,v) \geq e^{-t/\kappa}S_t f_0 + \int_0^t e^{-(t-s)/\kappa} S_{t-s}P_c f_s \mathrm{d}s. \] 
This implies that 
$f(t,x,v) \geq e^{-t/\kappa} S_t f_0$. Now by substituting this into the second term of the equation, we get
\[ f(t,x,v)\geq \int_0^t e^{-t/\kappa} S_{t-s}P_c S_s f_0 \mathrm{d}s, \] 
and by substituting the new lower bound in again we get
\[ f(t,x,v) \geq \int_0^t \int_0^s e^{-t/\kappa} S_{t-s}P_c S_{s-r} P_c S_r f_0 \mathrm{d}r \mathrm{d}s. \]
Now by our initial assumption $f_0 = \delta_{(x_0,v_0)}$, we have that
\[ S_r \delta_{x_0,v_0} = \delta_{x_0+rv_0, v_0}. \] 
Therefore we have
\begin{equation}
\begin{split} 
 P_c S_r \delta_{x_0,v_0} &= c\delta_{x_0+rv_0}1_{|v| \leq 1},\\ 
S_{s-r}P_c S_r \delta_{(x_0,v_0)} &= c \delta_{x_0+rv_0}(x-(s-r)v) 1_{|v| \leq 1}, \text{ and }\\
P_c S_{s-r}P_c S_r \delta_{(x_0,v_0)} &=\frac{c^2}{(s-r)^d} 1_{|v| \leq 1} 1_{|x_0+rv_0 - x| \leq (s-r)1}. 
\end{split}
\end{equation} 
Since $x$ is on the torus as long as $(s-r)1 \geq \sqrt{d}=2$, where $d$ is the dimension, then the last indicator function is always satisfied. So
\[  P_c S_{s-r}P_c S_r \delta_{(x_0,v_0)} \geq \frac{c^2}{(s-r)^d} 1_{|v| \leq 1} \] and hence
\[ S_{t-s} P_c S_{s-r} P_c S_r \delta_{(x_0,v_0)} \geq \frac{c^2}{(s-r)^d} 1_{|v| \leq 1}\] whenever $(s-r)1 \geq \sqrt{d}$. Finally choose $t_* = 2 \sqrt{d}=2$, in our one-dimensional setting, and integrate over $0<r<s<t_*$ so that indeed we have  $s-r \geq \sqrt{d}$,  to get our lower bound as needed.
\end{proof}

  \subsection{Representation formula for the steady state and moment bounds}\label{subsec:bounds on pressur density temper}
  In this subsection we proceed by getting a handy representation of our steady state for the linear BGK \ref{eq:LBGK} through an application of Duhamel's formula. This will allow us to estimate then by below and above the density and the pressure in the steady state. 
  
\begin{lemma}  Let $g^T$ be a solution to \eqref{eq:LBGK}.
We can get the following representation formula  
\[ g^T(x,v) = \int_0^1 \frac{e^{-(1-y)/\kappa |v|}}{\kappa |v| (1-e^{-1/\kappa |v|})}  \rho_g(x + \operatorname{sgn}(v)y) \left( \alpha\mathcal{M}_{T(x + \operatorname{sgn}(v)y)}(v) + (1-\alpha) \mathcal{M}_{\tau(x+ \operatorname{sgn}(v) y)}(v) \right) \mathrm{d}y 
\]
\end{lemma}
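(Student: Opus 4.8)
The plan is to use that, for the steady state $g^T$, equation \eqref{eq:LBGK} reduces to a scalar linear transport equation in $x$ with a prescribed source, which Duhamel's formula along the characteristics of $v\partial_x$ solves explicitly once the periodicity in $x$ is used to close it on the torus.

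First I would set $G(x,v) := \rho_g(x)\big(\alpha\mathcal{M}_{T(x)}(v)+(1-\alpha)\mathcal{M}_{\tau(x)}(v)\big)$ and note that $g^T$ solves $v\partial_x g^T+\tfrac1\kappa g^T=\tfrac1\kappa G$; equivalently, in mild form along characteristics — which is the form produced directly by the stochastic representation of the previous subsection — for every $t>0$
\begin{equation*}
g^T(x,v)=e^{-t/\kappa}\,g^T(x-vt,v)+\frac1\kappa\int_0^t e^{-r/\kappa}\,G(x-vr,v)\,\mathrm{d}r.
\end{equation*}
Using an a priori bound on $g^T$ (see below), letting $t\to\infty$ kills the first term and gives $g^T(x,v)=\tfrac1\kappa\int_0^\infty e^{-r/\kappa}G(x-vr,v)\,\mathrm{d}r$ for $v\neq 0$ (the line $\{v=0\}$ is null and can be ignored).

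Next I would exploit that $r\mapsto G(x-vr,v)$ is periodic with period $1/|v|$, because $G(\cdot,v)$ is $1$-periodic on $\mathbb{T}$. Decomposing $[0,\infty)=\bigcup_{k\ge 0}[k/|v|,(k+1)/|v|)$, substituting $r=k/|v|+\sigma$ on the $k$-th interval and summing the geometric series $\sum_{k\ge 0}e^{-k/(\kappa|v|)}=(1-e^{-1/(\kappa|v|)})^{-1}$ collapses everything to a single integral over one period,
\begin{equation*}
g^T(x,v)=\frac1{\kappa\,(1-e^{-1/(\kappa|v|)})}\int_0^{1/|v|}e^{-\sigma/\kappa}\,G(x-v\sigma,v)\,\mathrm{d}\sigma.
\end{equation*}
The substitution $\sigma=y/|v|$, then $y\mapsto 1-y$, together with one further use of $1$-periodicity to rewrite $x-\operatorname{sgn}(v)(1-y)$ as $x+\operatorname{sgn}(v)y$, brings this to exactly the claimed kernel $e^{-(1-y)/(\kappa|v|)}\big(\kappa|v|(1-e^{-1/(\kappa|v|)})\big)^{-1}$ with $G$ evaluated at $x+\operatorname{sgn}(v)y$; expanding $G$ then yields the stated $\rho_g$-weighted combination of Maxwellians. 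One runs the computation for $v>0$ and $v<0$ and packages both cases through $\operatorname{sgn}(v)$ and $|v|$.

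Every step is elementary, so the only point needing justification is the passage $t\to\infty$, i.e.\ the convergence of $\int_0^\infty e^{-r/\kappa}G(x-vr,v)\,\mathrm{d}r$ and the vanishing of $e^{-t/\kappa}g^T(x-vt,v)$; this follows from an a priori $L^\infty$ bound on $g^T$ (equivalently on $\rho_g$), obtained either by inserting $\|G\|_\infty\lesssim\|\rho_g\|_{L^\infty}$ back into the Duhamel identity, or from the minorisation/regularisation underlying the Doeblin argument of the previous subsection. I expect this to be the only non-cosmetic issue; the remainder is bookkeeping with the periodic geometry of the torus.
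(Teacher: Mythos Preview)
Your argument is correct and in spirit the same as the paper's: Duhamel along characteristics for the stationary ODE $v\partial_x g+\tfrac1\kappa g=\tfrac1\kappa G$, followed by periodicity on $\mathbb{T}$ to close the formula. The difference is that you pass to $t\to\infty$ and then periodize the resulting integral via a geometric series, which forces you to invoke an a priori $L^\infty$ bound on $g^T$ (the step you flag as the only non-cosmetic one). The paper sidesteps this entirely: it stops the Duhamel identity at $t=1/|v|$, uses periodicity of $g$ itself (namely $g(x+\operatorname{sgn}(v),v)=g(x,v)$) to obtain directly
\[
\big(e^{1/\kappa|v|}-1\big)\,g(x,v)=\int_0^{1/|v|}\frac{e^{s/\kappa}}{\kappa}\,G(x+vs,v)\,\mathrm{d}s,
\]
and then changes variables $y=|v|s$. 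No limit, no geometric series, no a priori bound. Your route works, but the detour through $t\to\infty$ is unnecessary; had you set $t=1/|v|$ in your own mild formula you would have landed on the single-period integral immediately.
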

\begin{proof} Let $v>0$ and notice that Duhamel's formula yields
\begin{align*} 
e^{t/\kappa} g(x+vt,v) - g(x,v) = \int_0^t \frac{e^{s/\kappa}}{\kappa} \rho_g(x+vs)  
\left( \alpha
\mathcal{M}_{T(x+vs)}(v) + (1-\alpha) \mathcal{M}_{\tau(x+vs)}(v) \right)
 \mathrm{d}s.
\end{align*}
Since $x \in \mathbb{T}$, by inserting $t=\frac{1}{|v|}$ and $y=vs$, we write 
\begin{align*} 
(e^{1/\kappa |v|}-1 ) g(x,v) & = \int_0^1 \frac{e^{y/\kappa |v|} }{\kappa |v|}\rho_g (x+y)  \left( \alpha
\mathcal{M}_{T(x+y)}(v) + (1-\alpha) \mathcal{M}_{\tau(x+y)}(v) \right)   \mathrm{d} y, 
\end{align*} or equivalently 
\begin{align*} 
 g(x,v)  &=   \int_0^1 \frac{ e^{ -(1-y)/ \kappa |v|} }{ \kappa |v| (1-e^{-1/\kappa |v|}) }\rho_g (x+y)  \left( \alpha
\mathcal{M}_{T(x+y)}(v) + (1-\alpha) \mathcal{M}_{\tau(x+y)}(v) \right)    \mathrm{d}y.
\end{align*} 
While same calculations for $v<0$: 
\begin{align*} 
 g(x,v)  &=   \int_0^1 \frac{ e^{ -(1-y)/ \kappa |v|} }{ \kappa |v| (1-e^{-1/\kappa |v|}) }\rho_g (x-y)  \left( \alpha
\mathcal{M}_{T(x-y)}(v) + (1-\alpha) \mathcal{M}_{\tau(x-y)}(v) \right)    \mathrm{d}y.
\end{align*} 
This implies the stated representation formula for the steady state.
\end{proof}

We proceed by providing explicit upper and lower bounds on the density $\rho_g$, the pressure $P_g$ and consequently the temperature $T_g$. 
Our ultimate goal is, through our bounds on $T_g$, to determine the values of $\alpha, \bar{\tau}, \underline{\tau}$ for which (i) the non-equilibrium steady state $g$ of the linear model coincides with a steady state to the non-linear model, providing with existence, and (ii) this steady state is unique.

\begin{lemma}  \label{lem: bounds_density}
Assuming that  $\underline{T}\leq \underline{\tau}$,  for $\underline{\tau} \geq 1$ and $\alpha \leq \frac{1-\underline{\tau}^{-1/2}}{\underline{T}^{-1/2}-\underline{\tau}^{-1/2}}$, we have uniformly in $x \in \mathbb{T}$, 
$$ 1 - \frac{1}{\kappa}\Big[ \alpha \underline{T}^{-1/2}  + (1-\alpha) \underline{\tau}^{-1/2} \Big] \lesssim \rho_g(x) \lesssim 
4  \left[ \frac{\alpha}{\kappa \underline{T}^{1/4}} + \frac{1-\alpha}{\kappa \underline{\tau}^{1/4}} \right]^2. $$
\end{lemma}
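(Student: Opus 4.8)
The plan is to combine the representation formula just established with conservation of mass. Since $\int_{\mathbb R}Lf\,dv=0$, the steady state produced by Doeblin's theorem is an invariant probability measure on $\mathbb T\times\mathbb R$, so we may normalise $\int_{\mathbb T}\rho_g(x)\,dx=1$. Integrating the representation formula over $v$, separating $v>0$ and $v<0$, and substituting $y\mapsto x\pm y$, one rewrites the steady state relation as a closed integral identity for the density:
\[
\rho_g(x)=\int_0^1\Phi_+(x,z)\,\rho_g(x+z)\,dz+\int_0^1\Phi_-(x,z)\,\rho_g(x-z)\,dz,\qquad
\Phi_\pm(x,z)=\int_0^\infty K(v,z)\bigl(\alpha\mathcal M_{T(x\pm z)}(v)+(1-\alpha)\mathcal M_{\tau(x\pm z)}(v)\bigr)\,dv ,
\]
where $K(v,z)=\dfrac{e^{-(1-z)/\kappa v}}{\kappa v(1-e^{-1/\kappa v})}$ is the collision-time kernel, which satisfies $\int_0^1 K(v,z)\,dz=1$, i.e.\ $K(v,\cdot)$ is a probability density on $(0,1)$. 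The point to keep in mind is that the naive bounds $\rho_g(x\pm z)\le\|\rho_g\|_\infty$ or $\rho_g(x\pm z)\ge\inf\rho_g$ only reproduce $\rho_g(x)$ on the right-hand side and are therefore empty; both inequalities must use the normalisation in a nontrivial way.

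For the \emph{upper bound} I would break this self-reference by interpolation. Because $z\mapsto x\pm z$ is a measure-preserving bijection of $\mathbb T$, Cauchy--Schwarz in $z$ gives
\[
\rho_g(x)\le\bigl(\|\Phi_+(x,\cdot)\|_{L^2(0,1)}+\|\Phi_-(x,\cdot)\|_{L^2(0,1)}\bigr)\,\|\rho_g\|_{L^2(\mathbb T)} ,
\]
and since $\|\rho_g\|_{L^2(\mathbb T)}^2\le\|\rho_g\|_\infty\int_{\mathbb T}\rho_g=\|\rho_g\|_\infty$, taking the supremum over $x$ yields $\|\rho_g\|_\infty\le\bigl(\sup_x\|\Phi_+(x,\cdot)\|_{L^2}+\sup_x\|\Phi_-(x,\cdot)\|_{L^2}\bigr)^2$; the outer square and the constant $4=2^2$ come from this interpolation power and from the two half-velocity ranges contributing symmetrically. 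It then remains to estimate $\|\Phi_\pm(x,\cdot)\|_{L^2(0,1)}$: bounding the Maxwellians above by means of $T\ge\underline T$, $\tau\ge\underline\tau$ and Gaussian decay, applying Minkowski's integral inequality, and using the direct computation $\|K(v,\cdot)\|_{L^2(0,1)}^2=\dfrac{1+e^{-1/\kappa v}}{2\kappa v(1-e^{-1/\kappa v})}$ together with $1-e^{-a}\ge a/(1+a)$, one reduces everything to Gaussian integrals of the type $\int_0^\infty v^{-1/2}e^{-v^2/2\theta}\,dv\sim\theta^{1/4}$. Combined with the prefactor $(2\pi\theta)^{-1/2}$ of $\mathcal M_\theta$ these produce the powers $\underline T^{-1/4}$, $\underline\tau^{-1/4}$ and the powers of $\kappa$ in the stated bound.

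For the \emph{lower bound} I would instead estimate $g^T(x,v)$ from below directly: in the representation formula replace $\mathcal M_{T(x\pm z)}(v)$ and $\mathcal M_{\tau(x\pm z)}(v)$ by their minima over the admissible temperature windows, bound the kernel below by $K(v,z)\ge e^{-(1-z)/\kappa v}\ge e^{-1/\kappa v}$, and use $\int_0^1\rho_g(x\pm z)\,dz=\int_{\mathbb T}\rho_g=1$ to remove the position-dependent density. Integrating over $v$ gives $\rho_g(x)\ge\int_{\mathbb R}e^{-1/\kappa|v|}\bigl(\alpha\,m_T(v)+(1-\alpha)\,m_\tau(v)\bigr)\,dv$, where $m_\theta(v)$ denotes the minimum of the corresponding Maxwellian over $[\underline\theta,\bar\theta]$; then $e^{-a}\ge 1-a$ on the bulk of the velocity range (treating the slow particles $|v|\lesssim\kappa^{-1}$, where $e^{-1/\kappa|v|}$ is negligible, separately) together with $\int_{\mathbb R}\mathcal M_\theta(v)\,dv=1$ leaves $1-\tfrac1\kappa\bigl(\alpha\underline T^{-1/2}+(1-\alpha)\underline\tau^{-1/2}\bigr)$ up to the constant absorbed in $\lesssim$. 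Here the hypothesis $\alpha\le\tfrac{1-\underline\tau^{-1/2}}{\underline T^{-1/2}-\underline\tau^{-1/2}}$ is precisely the condition $\alpha\underline T^{-1/2}+(1-\alpha)\underline\tau^{-1/2}\le1$, which is what keeps this lower bound non-negative.

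The step I expect to be the main obstacle is the slow-velocity regime $v\to0$. There $K(v,z)$ has a non-integrable $1/v$ singularity at $v=0$ for $z$ near $1$, so one cannot pull the density out of the $z$-integral; finiteness relies on the cancellation $\int_0^1 K(v,z)\,dz=1$, and this structure must be preserved while carrying out both the $L^2_z$ estimate of $\Phi_\pm$ (upper bound) and the $e^{-1/\kappa|v|}$-weighted Gaussian estimate (lower bound). Turning these into quantitative bounds with clean constants is the only genuinely delicate point; everything else is routine manipulation of Gaussian integrals.
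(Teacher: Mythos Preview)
Your overall strategy---feed the representation formula back into itself and break the self-reference using the normalisation $\int_{\mathbb T}\rho_g=1$---is exactly the paper's, but your implementation differs in both halves and each has a loose end.

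\textbf{Upper bound.} The paper does not use Cauchy--Schwarz. It first bounds the $v$-integral pointwise in $y$, obtaining $\Phi_\pm(x,y)\lesssim\kappa^{-1}\bigl(1+(1-y)^{-1}\bigr)^{1/2}\bigl[\alpha\underline T^{-1/4}+(1-\alpha)\underline\tau^{-1/4}\bigr]$, and then splits the $y$-integral at $1-\delta$: on $[0,1-\delta]$ it uses $\int\rho_g=1$, on $[1-\delta,1]$ it uses $\|\rho_g\|_\infty$ together with the integrability of $(1-y)^{-1/2}$, and finally chooses $\delta$ so that the $\|\rho_g\|_\infty$ term carries coefficient $\tfrac12$ and can be absorbed. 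Your Cauchy--Schwarz/interpolation route is a perfectly legitimate alternative for the absorption step, but your Minkowski estimate of $\|\Phi_\pm\|_{L^2_z}$ does not give the claimed $\theta^{-1/4}$ scaling: from your own formula $\|K(v,\cdot)\|_{L^2}^2=(1+e^{-1/\kappa v})\big/\bigl(2\kappa v(1-e^{-1/\kappa v})\bigr)$ one sees $\|K(v,\cdot)\|_{L^2}\to 1$ (not $v^{-1/2}$) as $v\to\infty$, so $\int_0^\infty\|K(v,\cdot)\|_{L^2}\,\mathcal M_\theta(v)\,dv$ picks up an $O(1)$ contribution from large velocities in addition to the $\theta^{-1/4}$ piece coming from small $v$. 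You still obtain a finite bound on $\|\rho_g\|_\infty$, just not with the explicit $\underline T^{-1/4},\underline\tau^{-1/4}$ prefactors of the statement.

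\textbf{Lower bound.} There is a small gap. After replacing $\mathcal M_{T(x\pm z)}(v)$ by the temperature-window minimum $m_T(v)$ you correctly extract $\int_0^1\rho_g=1$, but you then invoke $\int_{\mathbb R}\mathcal M_\theta=1$ whereas what actually remains is $\int_{\mathbb R}m_T(v)\,dv<1$ (and this shortfall depends on $\bar T$, which is absent from the stated bound). The paper sidesteps this by reversing the order: it integrates in $v$ first with the $z$-dependent temperature still in place, restricting to $|v|>\Lambda$ where the full kernel satisfies $K\gtrsim 1$, so that $\int_\Lambda^\infty\mathcal M_{T(x\pm z)}(v)\,dv\ge\tfrac12-C\,\underline T^{-1/2}$ uniformly in $z$; only then does it apply $\int_0^1\rho_g(x\pm z)\,dz=1$. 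Your kernel bound $K(v,z)\ge e^{-1/\kappa|v|}$ is correct, but using it in place of the paper's large-$|v|$ restriction costs a logarithm when you try to estimate $\int_{|v|>\Lambda}|v|^{-1}\mathcal M_\theta$.

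Your identification of the slow-velocity regime as the main obstacle is slightly off: the $1/v$ singularity of $K$ is always paired with $e^{-(1-z)/\kappa v}$ and is harmless; the real work in both proofs is the absorption of $\|\rho_g\|_\infty$ against the unit mass.
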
 


\begin{proof} 
\textit{Upper bound on the density}: 
Looking then at the asymptotics of the integrand, we first notice that 
$$ \frac{e^{-(1-y)/\kappa |v|}}{\kappa |v| (1-e^{-1/\kappa|v|}) } \leq \frac{1}{\kappa} \max\left\{  1+ \frac{e^{-1}}{1-y}, \frac{1}{|v|(1-e^{-1/\kappa |v|})}  \right\}$$ where the first term accounts for the big velocities and the second for the small. 

Then 
\begin{align*} 
&\int_0^{\infty} g^T(x,v)  \mathrm{d}v = \int_0^{\infty} \int_0^1 \frac{e^{-(1-y)/\kappa |v|}}{\kappa |v| (1-e^{-1/\kappa|v|}) } \rho_g(x+y) \left( \alpha
\mathcal{M}_{T(y+x)} + (1-\alpha) \mathcal{M}_{\tau(y+x)} \right) \mathrm{d} y \mathrm{d} v\\ & = 
\int_0^1 [  \alpha \mathcal{K}_1(T,y)+ (1-\alpha) \mathcal{K}_1(\tau,y) ] \rho_g (x+y) \mathrm{d} y.
\end{align*} 
where 
\begin{equation} \label{eq:def of F}
\mathcal{K}_1(T(y+x), y) = \int_0^\infty  \frac{1}{\kappa |v|} e^{-(1-y)/\kappa |v|}(1-e^{-1/\kappa|v|})^{-1} \mathcal{M}_{T(y+x)}(v) \mathrm{d}v. \end{equation} 

Now in order to bound $\mathcal{K}_1$, we denote by $v_*$ the value so that  
\begin{align*} 
\mathcal{K}_1(T, y)& \leq \int_{v_*}^{\infty}
\frac{1}{v_* \kappa} \left( 1+ \frac{e^{-1}}{1-y}\right)  \mathcal{M}_{T(x+y)}(v)  \mathrm{d}v  + \int_0^{v_*}   \frac{ e^{-(1-y)/\kappa |v|}}{\kappa |v|}(1+e^{-1/\kappa |v|})   \mathcal{M}_{T(x+y)}(v)    \mathrm{d}v \\ 
&\lesssim
 \mathcal{O}\left(  \frac{1+\frac{e^{-1}}{1-y} }{\kappa v_*} +  \frac{v_*}{\kappa T(x+y)^{1/2}} \right).
\end{align*} 
Optimising over $v_*$ we get that for $v_*= T(y+x)^{1/4} \left(\frac{1}{2}  +  \frac{e^{-1}}{2(1-y)} \right)^{1/2}$, 
\begin{align} \label{eq:estimat on F}
\mathcal{K}_1(T,y+x) \lesssim \frac{1}{\kappa} \left( \frac{1}{2}+ \frac{e^{-1}}{2(1-y)} \right)^{1/2} T(y+x)^{-1/4}. 
\end{align} 
This leads to the following upper bound on the density for positive $v$ for some $\delta \in (0,1)$: 
\begin{equation} \label{eq:estim_density_v>0}
\begin{split}
 &\int_0^{\infty} g^T(x,v) \mathrm{d}v \lesssim 
 \int_0^{1} \Bigg[ \frac{\alpha}{\kappa}  \left( 1+ \frac{e^{-1}}{1-y} \right)^{1/2} T(y+x)^{-1/4} 
 \\ &\qquad\qquad\qquad\qquad\qquad\qquad\qquad + \frac{1-\alpha}{\kappa}   \left( 1+ \frac{e^{-1}}{1-y} \right)^{1/2} \tau(y+x)^{-1/4} \Bigg] \rho_g (x+y)  \mathrm{d}y\\ 
 & \lesssim  \frac{\left(1+\frac{1}{e \delta} \right)^{1/2}}{\kappa} \int_0^{1-\delta} \rho_g(y+x) \left[\alpha T(x+y)^{1/4} + (1-\alpha) \tau(x+y)^{1/4}  \right]  \mathrm{d}y  +\\ &
 \int_{1-\delta}^1 \left[  \frac{\alpha}{\kappa} \left( \frac{1}{2}+ \frac{e^{-1}}{2(1-y)} \right)^{1/2} T(y+x)^{-1/4} +   \frac{1-\alpha}{\kappa} \left( \frac{1}{2}+ \frac{e^{-1}}{2(1-y)} \right)^{1/2} T(y+x)^{-1/4} \right] \rho_g (x+y)  \mathrm{d}y\\ 
 &\lesssim \left[ \frac{\alpha}{\kappa \inf_y T(x+y)^{1/4}} + \frac{1-\alpha}{\kappa \inf_y \tau(x+y)^{1/4}} \right] 
 \bigg[ \left(1+\frac{1}{e \delta} \right)^{1/2} \int_{0}^{1-\delta} \rho_g (x+y)  \mathrm{d}y 
 \\ &\qquad\qquad\qquad\qquad\qquad\qquad\qquad\qquad\qquad
 + \| \rho_g \|_{\infty} \int_{1-\delta}^1  \left( \frac{1}{2}+ \frac{e^{-1}}{2(1-y)} \right)^{1/2}   \mathrm{d}y \bigg]. 
 \end{split}
\end{equation}
Expanding for $\delta$ small we notice that  $\int_{1-\delta}^1  \left( \frac{1}{2}+ \frac{e^{-1}}{2(1-y)} \right)^{1/2}   \mathrm{d} y  = \mathcal{O}\left( i \delta \operatorname{cot}(1)\operatorname{csc}(1)/2   + \sqrt{\delta (1+\delta)} \right) = \mathcal{O}\left( \sqrt{\delta} \right)$. Taking into account also the negative velocities we eventually bound the last line in \eqref{eq:estim_density_v>0} as 
\begin{equation}
\begin{split}
\rho_g(x)& \lesssim  \left[ \frac{\alpha}{\kappa \underline{T}^{1/4}} + \frac{1-\alpha}{\kappa \underline{\tau}^{1/4}} \right] \left[ C_1 \left(1+\frac{1}{e \delta} \right)^{1/2} + C_2 \| \rho_g \|_{\infty} \sqrt{\delta} \right] \\ 
& \lesssim  \left[ \frac{\alpha}{\kappa \underline{T}^{1/4}} + \frac{1-\alpha}{\kappa \underline{\tau}^{1/4}} \right] \left[ \frac{C_1}{\sqrt{\delta}} +  \| \rho_g \|_{\infty} C_2 \sqrt{\delta} \right]
 \end{split}
\end{equation}
for some finite constants $C_1,C_2$. Take now $\sqrt{\delta} = \frac{1}{2C_2 \left[ \frac{\alpha}{\kappa \underline{T}^{1/4}} + \frac{1-\alpha}{\kappa \underline{\tau}^{1/4}} \right] }$ to find that for all $x \in \mathbb{T}$
\begin{equation}
 \rho_g (x) \lesssim 4C_1C_2  \left[ \frac{\alpha}{\kappa \underline{T}^{1/4}} + \frac{1-\alpha}{\kappa \underline{\tau}^{1/4}} \right]^2. 
\end{equation}

\textit{Lower bound on the density}: 
Let $\Lambda>0$ sufficiently large so that $\frac{1}{ (e^{1/\kappa|v|}-1)} \sim \kappa |v|$ for $|v| > \Lambda$ and write 
\begin{equation}
\begin{split}
& \int_0^{\infty} g^T(x,v)  \mathrm{d}v  = \int_0^{\infty} \int_0^1 
 \frac{e^{-(1-y)/\kappa |v|}}{\kappa |v| (1-e^{-1/\kappa |v|})}  \rho_g(x+y) \left[ \alpha\mathcal{M}_{T(x +y)}(v) + (1-\alpha) \mathcal{M}_{\tau(x+y)}(v) \right] \mathrm{d}y \mathrm{d}v \\ &  
 \gtrsim \int_0^1 \int_{\Lambda}^{\infty} \frac{e^{-(1-y)/\kappa |v|}}{\kappa |v| (1-e^{-1/\kappa |v|})}  \rho_g(x+y) \left[ \alpha\mathcal{M}_{T(x +y)}(v) + (1-\alpha) \mathcal{M}_{\tau(x+y)}(v) \right] \mathrm{d}v \mathrm{d}y \\
 & \gtrsim  \int_0^1 \rho_g(x+y)  \int_{\Lambda}^{\infty}    
 \frac{1}{ (e^{1/\kappa|v|}-1)}
  \frac{1}{\kappa |v|} 
   \left[ \alpha\mathcal{M}_{T(x +y)}(v) + (1-\alpha) \mathcal{M}_{\tau(x+y)}(v) \right] \mathrm{d}v \mathrm{d}y \\ 
   & \gtrsim  \int_0^1 \rho_g(x+y)   \left[  \int_{\frac{\Lambda}{\sqrt{T(x+y)}} }^{\infty} \frac{\alpha}{\kappa \sqrt{T(x+y)}} \mathcal{M}_1(v) \mathrm{d}v +  \int_{\frac{\Lambda}{\sqrt{\tau(x+y)}} }^{\infty} \frac{1-\alpha}{\kappa \sqrt{\tau(x+y)}} \mathcal{M}_1(v) \mathrm{d}v \right] \mathrm{d}y \\ & 
   \gtrsim \sqrt{\frac{\pi}{2}}  - \sqrt{\frac{\pi}{2}}  \Big( \alpha T(x+y)^{-1/2} + (1-\alpha) \tau(x+y)^{-1/2} \Big)\\ 
   & \gtrsim 1 - \Big[ \alpha \underline{T}^{-1/2}  + (1-\alpha) \underline{\tau}^{-1/2} \Big].
 \end{split} 
\end{equation}
This implies the stated lower bound on the density which is a non-negative quantity under the constraints in the hypothesis. 
\end{proof} 

Now we proceed to get upper and lower bounds on the pressure $P_{g^T}(x)$. First we notice that 
$u_{g^T}(x) = \frac{\int v g^T(x,v)  \mathrm{d}v}{\int g^T(x,v)  \mathrm{d}v} =0$. This is since due to mass conservation $\partial_x ( u_{g^T}(x)\rho_{g^T}(x) )=0$, and so $\int vg^T(x,v)  \mathrm{d}v = \int v g^T(0,v)  \mathrm{d} v =0$. Moreover $\partial_x P_{g^T}(x) = -\frac{1}{\kappa} u_{g^T}(x)\rho_{g^T}(x) =0, $ so that the pressure is constant in $x$.

\begin{lemma}  \label{lem: bounds_press} 
For $\alpha$ sufficiently small so that the following holds true
\begin{align} \label{eq:constr_LB on P posit}
 \alpha \left(\sqrt{\underline{\tau}} - \sqrt{\underline{T}}  + \frac{4\pi}{\kappa^2 \underline{T}^{1/4}\underline{\tau}^{1/4}  }
 \left(\sqrt{\underline{\tau}} -  \underline{T}^{1/4}\underline{\tau}^{1/4}  \right) 
 + \frac{2\pi}{\kappa^2}\alpha \left(  \frac{1}{\underline{T}^{1/4}} - \frac{1}{\underline{\tau}^{1/4}} \right)^2  \right) \leq \sqrt{\underline{\tau}} - \frac{2\pi}{\kappa^2} \frac{1}{\sqrt{\underline{\tau}}},\end{align} 
we have uniformly in $x \in \mathbb{T}$, 
\begin{align*} 
 \left[ \alpha \frac{\underline{T}^{1/2}}{2\pi} + (1-\alpha)\frac{\underline{\tau}^{1/2}}{2\pi}  \right]  
-& \frac{1}{\kappa}  \left[ \frac{\alpha}{\kappa \underline{T}^{1/4}} + \frac{1-\alpha}{\kappa \underline{\tau}^{1/4}} \right]^2 \lesssim 
P_{g^T}(x) \\ & 
 \lesssim  C_0 \left[ \frac{\alpha}{\kappa \underline{T}^{1/4}} + \frac{1-\alpha}{\kappa \underline{\tau}^{1/4}} \right]^2 \left[ \alpha \sqrt{\overline{T}}+ (1-\alpha)  \sqrt{\overline{\tau}} \right]
 \end{align*} 
for some finite temperature-independent constant $C_0$. 
\end{lemma}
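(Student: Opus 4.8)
The plan is to run the same argument as in the proof of Lemma \ref{lem: bounds_density}, now carrying the extra weight $v^2$ through the representation formula established above. Since $u_{g^T}\equiv 0$ has already been shown, $P_{g^T}(x)=\int_{\mathbb R}v^2 g^T(x,v)\,\mathrm dv$ and it is constant in $x$. Inserting the representation formula into this integral, splitting the $v$--integral into $v>0$ and $v<0$, using that each $\mathcal M_S$ is even and that $v^2/(\kappa|v|)=|v|/\kappa$, one is led to
\[
P_{g^T}(x)=\sum_{\pm}\int_0^1\Big(\alpha\,\mathcal K_2\big(T(x{\pm}y),y\big)+(1{-}\alpha)\mathcal K_2\big(\tau(x{\pm}y),y\big)\Big)\rho_g(x{\pm}y)\,\mathrm dy ,
\]
where, in analogy with $\mathcal K_1$ in \eqref{eq:def of F},
\[
\mathcal K_2(S,y)=\int_0^\infty\frac{v}{\kappa}\,\frac{e^{-(1-y)/\kappa v}}{1-e^{-1/\kappa v}}\,\mathcal M_S(v)\,\mathrm dv=\frac{1}{\kappa\sqrt{2\pi S}}\int_0^\infty v\,\frac{e^{-(1-y)/\kappa v}}{1-e^{-1/\kappa v}}\,e^{-v^2/2S}\,\mathrm dv .
\]
Everything then reduces to two--sided control of $\mathcal K_2$, combined with the density bounds of Lemma \ref{lem: bounds_density}.

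For the \emph{lower bound} I would keep only the crude inequalities $1-e^{-1/\kappa v}\le 1$ and $e^{-(1-y)/\kappa v}\ge e^{-1/\kappa v}\ge 1-\tfrac{1}{\kappa v}$, which give the pointwise estimate
\[
\mathcal K_2(S,y)\ \ge\ \frac{1}{\kappa\sqrt{2\pi S}}\int_0^\infty \Big(v-\tfrac1{\kappa}\Big)e^{-v^2/2S}\,\mathrm dv\ =\ \frac{\sqrt S}{\kappa\sqrt{2\pi}}-\frac{1}{2\kappa^2}.
\]
(Discarding the factor $\sim\kappa v$ present in $1/(1-e^{-1/\kappa v})$ at large $v$ is exactly what reduces the true size $\sim S$ of $\mathcal K_2$ to the weaker $\sqrt S$ appearing in the statement, which is all we need.) Plugging this into the formula above, bounding $\rho_g$ from below on the bulk of the $y$--integral by the (positive, under the standing hypotheses) lower bound of Lemma \ref{lem: bounds_density}, and absorbing the loss from the $1/(2\kappa^2)$--term, from replacing $T,\tau$ by $\underline T,\underline\tau$, and from the wasteful region via the \emph{upper} density bound $\|\rho_g\|_\infty\lesssim\big[\tfrac{\alpha}{\kappa\underline T^{1/4}}+\tfrac{1-\alpha}{\kappa\underline\tau^{1/4}}\big]^2$, yields a main term of size $\alpha\underline T^{1/2}+(1-\alpha)\underline\tau^{1/2}$ minus a remainder of the form $\tfrac1\kappa\big[\tfrac{\alpha}{\kappa\underline T^{1/4}}+\tfrac{1-\alpha}{\kappa\underline\tau^{1/4}}\big]^2$. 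The role of assumption \eqref{eq:constr_LB on P posit} is precisely to guarantee, for $\alpha$ small enough, that this negative remainder stays strictly below the positive main term, so that the resulting lower bound on $P_{g^T}$ is in fact positive; together with the density bounds it then also yields a lower bound on $T_g=P_{g^T}/\rho_g$.

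For the \emph{upper bound} I would use $\tfrac{1}{1-e^{-1/\kappa v}}\le 1+\kappa v$ (equivalently $1-e^{-x}\ge\tfrac{x}{1+x}$) together with $e^{-(1-y)/\kappa v}\le 1$, then split the $v$--integral at a threshold $v_*$ exactly as in the proof of Lemma \ref{lem: bounds_density}: on $\{v>v_*\}$ bound the Maxwellian tail, on $\{v<v_*\}$ use $\mathcal M_S(v)\le(2\pi S)^{-1/2}$, and optimise over $v_*$; combined with the $\delta$--splitting of the $y$--integral near $y=1$ and the upper density bound of Lemma \ref{lem: bounds_density} (and the fact that $\overline T$ is bounded in terms of $\overline\tau,\underline\tau$ uniformly in $\alpha$), this yields the asserted upper bound.

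The genuinely delicate point is the lower bound: unlike in the density estimate, one needs an estimate that survives the subtraction of several error terms, and this only works for small $\alpha$ — quantitatively, exactly under the assumption \eqref{eq:constr_LB on P posit}. Keeping all the error contributions (the $1/\kappa^2$--remainder, the loss from $T\mapsto\underline T$, and the $y\to1$ tail handled through $\|\rho_g\|_\infty$) uniformly in $x$ below the main term $\sim\alpha\underline T^{1/2}+(1-\alpha)\underline\tau^{1/2}$ is where the bookkeeping has to be carried out carefully; the upper bound is a routine variant of the computation already done for the density.
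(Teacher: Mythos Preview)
Your lower bound is essentially the paper's argument: both drop the factor $(1-e^{-1/\kappa v})^{-1}\ge 1$, expand the remaining exponential from below (the paper keeps the sharper $e^{-(1-y)/\kappa v}\ge 1-\tfrac{1-y}{\kappa v}$ while you take $1-\tfrac1{\kappa v}$, which only costs a constant), obtain $\mathcal K_2(S,y)\gtrsim \tfrac{\sqrt S}{\kappa}-\tfrac{C}{\kappa^2}$, and then integrate in $y$ against $\rho_g$. The only minor difference is that for the main term the paper uses the normalisation $\int_0^1\rho_g(x+y)\,\mathrm dy=1$ rather than the pointwise lower bound of Lemma~\ref{lem: bounds_density}; for the error term both use the \emph{upper} density bound, and condition~\eqref{eq:constr_LB on P posit} enters exactly as you describe.

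The upper bound, however, is where your sketch goes astray. Replacing $e^{-(1-y)/\kappa v}\le 1$ and $(1-e^{-1/\kappa v})^{-1}\le 1+\kappa v$ gives
\[
\mathcal K_2(S,y)\ \le\ \frac{1}{\kappa}\int_0^\infty v(1+\kappa v)\,\mathcal M_S(v)\,\mathrm dv\ =\ \frac{\sqrt S}{\kappa\sqrt{2\pi}}+\frac{S}{2},
\]
so the best you can get this way is $\mathcal K_2=O(S)$, not $O(\sqrt S)$; no $v_*$--splitting can repair this, because the factor $\kappa v$ you have introduced forces the full second moment of $\mathcal M_S$ into the answer. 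Carrying this through yields $P_{g^T}\lesssim[\,\cdot\,]^2(\alpha\overline T+(1-\alpha)\overline\tau)$, which is strictly weaker than the stated bound and, more importantly, would not let the map $\mathcal F$ send $\{T\le\overline T\}$ into itself in the fixed--point argument of Section~\ref{subsec:contraction mapping}. The paper instead keeps the $y$--dependent damping and reuses the two--regime estimate from the proof of Lemma~\ref{lem: bounds_density}: in the large--velocity regime one gets a bound that is $v$--independent but carries the integrable singularity $(1+\tfrac{1}{e(1-y)})^{1/2}$, and this is precisely what produces the $\sqrt{\overline T}$, $\sqrt{\overline\tau}$ scaling. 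Your ``$\delta$--splitting near $y=1$'' is vestigial: once you replace $e^{-(1-y)/\kappa v}$ by $1$ there is nothing $y$--dependent left to split. (The side remark that $\overline T$ is bounded in terms of $\overline\tau,\underline\tau$ is also circular here, since that bound is only obtained downstream, via this very lemma.)
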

\begin{proof}
\textit{Upper Bound on the pressure $P_{g^T}$}: Using the representation formula for the stationary state $g^T$, we need to control the following for positive velocities
\begin{align*} 
J_+ (x) :=\int_0^1 \rho_{g^T}(x+y) \int_0^{\infty} |v|  \frac{e^{-(1-y)/\kappa |v|}}{\kappa (1-e^{-1/\kappa |v|})}  \left( \alpha
\mathcal{M}_{T(x+y)}(v) + (1-\alpha) \mathcal{M}_{\tau(x+y)}(v) \right)  \mathrm{d} v.
\end{align*} 
 Let $v_*$ positive and finite to be chosen later. Using that for large velocities $\frac{e^{-(1-y)/\kappa |v|} }{1-e^{-1/\kappa |v|}} \leq 1+\frac{1}{e(1-y)}$ while for bounded velocities $\frac{e^{-(1-y)/\kappa |v|} }{1-e^{-1/\kappa |v|}} $ is bounded uniformly in $y$, we estimate 
 \begin{align*} 
  \int_{0}^{\infty} \frac{v e^{y/\kappa |v|}}{e^{1/\kappa |v|}-1}\mathcal{M}_{T(x+y)}(v)   \mathrm{d}v  &= 
 \int_{v_*}^{\infty} \frac{v e^{y/\kappa |v|}}{e^{1/\kappa |v|}-1}\mathcal{M}_{T(x+y)}(v)   \mathrm{d}v  +
   \int_0^{v_*} \frac{v e^{y/\kappa |v|}}{e^{1/\kappa |v|}-1}\mathcal{M}_{T(x+y)}(v)   \mathrm{d}v
 \\  &\lesssim 
 \frac{1}{\kappa} \int_{v_*}^{\infty} \left( 1+\frac{1}{e(1-y)}\right) |v| \mathcal{M}_{T(x+y)}(v)   \mathrm{d}v  + \frac{1}{\kappa} \int_0^{v_*} v \mathcal{M}_{T(x+y)}(v)   \mathrm{d}v  \\
  &\lesssim  \frac{ \left( 1+\frac{1}{e(1-y)}\right)}{\kappa} \frac{\sqrt{T(x+y)}}{v_*} + \frac{v_* \sqrt{T(x+y)}}{\kappa}.
\end{align*} 
After optimizing in $v_*$, i.e. taking $v_* =\left( 1+\frac{1}{e(1-y)}\right)^{1/2}$, we find 
\begin{align} \label{eq:upper bound on G}
 \int_{0}^{\infty} \frac{v e^{y/\kappa |v|}}{e^{1/\kappa |v|}-1}\mathcal{M}_{T(x+y)}(v)   \mathrm{d}v \lesssim \frac{\sqrt{\overline{T}}}{\kappa} \left( 1+\frac{1}{e(1-y)}\right)^{1/2}.
\end{align} 
Finally 
\begin{align*} 
J_+(x)  \lesssim \int_0^1 \rho_{g^T}(x+y) \sqrt{\left( 1+\frac{1}{e(1-y)}\right)} \left[ \frac{\alpha \sqrt{\overline{T}}}{\kappa}+ \frac{(1-\alpha)  \sqrt{\overline{\tau}} }{\kappa} \right]  \mathrm{d} y
\end{align*} 
which employing our upper bound on the density $\rho_{g^T}$, we have 
\begin{align}
J_+(x) \lesssim 
4  \left[ \frac{\alpha}{\kappa \underline{T}^{1/4}} + \frac{1-\alpha}{\kappa \underline{\tau}^{1/4}} \right]^2 \left[ \alpha \sqrt{\overline{T}}+ (1-\alpha)  \sqrt{\overline{\tau}} \right]    \int_0^1\sqrt{\left( 1+\frac{1}{e(1-y)}\right)}  \mathrm{d}y. 
\end{align}
We apply the analogous arguments same for the negative velocities to get that
\begin{align}
P_{g^T} (x) \lesssim  C_0\left[ \frac{\alpha}{\kappa \underline{T}^{1/4}} + \frac{1-\alpha}{\kappa \underline{\tau}^{1/4}} \right]^2 \left[ \alpha \sqrt{\overline{T}}+ (1-\alpha)  \sqrt{\overline{\tau}} \right]
\end{align}
for a finite constant $C_0$.
\\

\noindent
\textit{Lower Bound on the pressure $P_{g^T}$}: For this we bound $e^{-(1-y)/\kappa |v|} \geq 1- \frac{1-y}{\kappa |v|}$, so that for  positive velocities
\begin{equation} 
\begin{split} 
\int_0^{\infty} |v| \frac{e^{-(1-y)/\kappa |v|}}{1-e^{-1/\kappa |v|}} \mathcal{M}_{T(x+y)}(v)  \mathrm{d}v & \gtrsim \int_0^{\infty} v\mathcal{M}_{T(x+y)}(v) - \int_0^{\infty} |v|  \frac{1-y}{\kappa |v|}  \mathcal{M}_{T(x+y)}(v) \mathrm{d}v 
\\ & = \frac{\sqrt{T(x+y)}}{\sqrt{2\pi}} - \frac{1-y}{2\kappa} \gtrsim \frac{\sqrt{\underline{T}}}{2\pi}  - \frac{1-y}{2\kappa}.
\end{split} 
\end{equation}  
Consequently 
\begin{align*} 
 J_+(x)& \gtrsim \int_0^1 \rho_{g^T}(x+y) \left[ \alpha \frac{\underline{T}^{1/2}}{2\pi} + (1-\alpha)\frac{\underline{\tau}^{1/2}}{2\pi}  \right]  \mathrm{d} y  - \int_0^1 \rho_{g^T}(x+y) \frac{1-y}{2\kappa} \mathrm{d}y  \\ 
& \gtrsim  \left[ \alpha \frac{\underline{T}^{1/2}}{2\pi} + (1-\alpha)\frac{\underline{\tau}^{1/2}}{2\pi}  \right]  
- \frac{1}{\kappa}  \left[ \frac{\alpha}{\kappa \underline{T}^{1/4}} + \frac{1-\alpha}{\kappa \underline{\tau}^{1/4}} \right]^2.
\end{align*}
Collecting the same estimates for the negative $v$'s we write 
$$ P_{g^T}(x) \gtrsim \left[ \alpha \frac{\underline{T}^{1/2}}{2\pi} + (1-\alpha)\frac{\underline{\tau}^{1/2}}{2\pi}  \right]  
- \frac{1}{\kappa}  \left[ \frac{\alpha}{\kappa \underline{T}^{1/4}} + \frac{1-\alpha}{\kappa \underline{\tau}^{1/4}} \right]^2.
$$
The constraint \eqref{eq:constr_LB on P posit} is required to make this lower bound non-negative. 
\end{proof}

\subsection{Contraction of the mapping $\mathcal{F}$} \label{subsec:contraction mapping}
 Since we have ensured the uniqueness of the steady state $g = g^T$ for the equation \eqref{eq:LBGK}, as proved in subsection \ref{subs:Exist_and_uniq_linear}, we are allowed to define the mapping 
 $$\mathcal{F}: C(\mathbb{T}) \to C(\mathbb{T}),\qquad  (\mathcal{F} T) (x)=  \frac{\int_{\mathbb{T}} |v|^2 g(x,v)  \mathrm{d} v}{\int_{\mathbb{T}} g(x,v)  \mathrm{d} v} = \frac{P_{g^T}}{\rho_{g^T}}(x).$$
 
Our goal is to show that this mapping is contractive, which implies that there is a fixed point. Existence of such fixed point, i.e. $T \in C(\mathbb{T})$ so that $T(x) = T_g(x) $ for all $x \in \mathbb{T}$, implies that the steady state found for the linear model \eqref{eq:LBGK} corresponds to a steady state in the non-linear BGK model \eqref{eq:NL_BGK}. 

Before proceeding with the statement let us define the following set that determines the Lipschitz constants for $\mathcal{F}$, in terms of $\alpha$ :
 
\begin{align*}
\mathcal{C}_{ \underline{T}, \underline{\tau}, \kappa} &= (0,1)\cap\Bigg\{ \alpha:  8C \alpha \left( 1 - \Big[ \alpha \underline{T}^{-1/2}  + (1-\alpha) \underline{\tau}^{-1/2} \Big] \right)^2
\left[ \frac{\alpha}{ \underline{T}^{1/4}} + \frac{1-\alpha}{ \underline{\tau}^{1/4}} \right]^4\times 
\\ &  \qquad \qquad\qquad\qquad
 \times \Bigg[
   \frac{ \left( (1-\alpha)  \sqrt{\overline{\tau}} + \alpha \sqrt{\overline{T}} \right)
 }{ \sqrt{\kappa}\underline{T}^2 \Big( 1- 2 [(1-\alpha) \underline{\tau}^{-1/4} + \alpha\underline{T}^{-1/4} ] \Big) }\left( 1+ \frac{4}{\kappa} \right) + \frac{1}{\kappa \underline{T}^{1/8}}
   \Bigg]  \in [0, 1)
\Bigg\}.
\end{align*}

 \begin{prop} 
 Let two temperature profiles $T_1, T_2 \in C(\mathbb{T})$. Assuming that 
  $$ \inf_{x\in \mathbb{T}} \tau(x)=: \underline{\tau} > \inf_{x\in \mathbb{T}} T(x):=\underline{T},\ \underline{\tau}> 2^4 \text{ and }  \alpha \in \mathcal{C}_{ \underline{T}, \underline{\tau}, \kappa},$$ the mapping $\mathcal{F}$ exhibits a contraction property
 $$ \| \mathcal{F} (T_1) -\mathcal{F} (T_2) \|_{L^{\infty}(\mathbb{T})} \leq  C_{\alpha, \underline{T}, \underline{\tau}, \kappa} \| T_1-T_2\|_{L^{\infty}(\mathbb{T})}$$
 for some explicit finite constant $C_{\alpha, \underline{T}, \underline{\tau}, \kappa}  \in [0, 1)$ depending on $\alpha, \underline{T}, \underline{\tau}, \kappa$.
 \end{prop}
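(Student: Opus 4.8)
The plan is to compare the two steady states $g^{T_1}$ and $g^{T_2}$ directly through the Duhamel representation formula of Subsection~\ref{subsec:bounds on pressur density temper}, and to reduce the estimate on $\mathcal{F}(T_1)-\mathcal{F}(T_2)$ to self-consistent $L^\infty(\mathbb{T})$ bounds on the differences of their densities and pressures. Throughout, the hypotheses guarantee that both $T_1,T_2$ range in an interval $[\underline{T},\overline{T}]$ on which the constraints of Lemmas~\ref{lem: bounds_density}--\ref{lem: bounds_press} are in force, so that the density and pressure bounds of those lemmas apply to $g^{T_1}$ and $g^{T_2}$.

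First I would use the elementary identity, pointwise in $x$,
\[
\mathcal{F}(T_1)(x)-\mathcal{F}(T_2)(x)=\frac{P_{g^{T_1}}(x)-P_{g^{T_2}}(x)}{\rho_{g^{T_1}}(x)}-\frac{P_{g^{T_2}}(x)\,\big(\rho_{g^{T_1}}(x)-\rho_{g^{T_2}}(x)\big)}{\rho_{g^{T_1}}(x)\,\rho_{g^{T_2}}(x)},
\]
and bound the denominators from below and $P_{g^{T_2}}$ from above using Lemmas~\ref{lem: bounds_density} and \ref{lem: bounds_press}. Writing $h:=g^{T_1}-g^{T_2}$, $\rho_h:=\rho_{g^{T_1}}-\rho_{g^{T_2}}$, $P_h:=P_{g^{T_1}}-P_{g^{T_2}}$, this yields
\[
\|\mathcal{F}(T_1)-\mathcal{F}(T_2)\|_{L^\infty}\lesssim\frac{\|P_h\|_{L^\infty}}{1-[\alpha\underline{T}^{-1/2}+(1-\alpha)\underline{\tau}^{-1/2}]}+\frac{\big[\tfrac{\alpha}{\kappa\underline{T}^{1/4}}+\tfrac{1-\alpha}{\kappa\underline{\tau}^{1/4}}\big]^{2}\big[\alpha\sqrt{\overline{T}}+(1-\alpha)\sqrt{\overline{\tau}}\big]}{\big(1-[\alpha\underline{T}^{-1/2}+(1-\alpha)\underline{\tau}^{-1/2}]\big)^{2}}\,\|\rho_h\|_{L^\infty},
\]
so it remains to control $\|\rho_h\|_{L^\infty}$ and $\|P_h\|_{L^\infty}$ by a small multiple of $\|T_1-T_2\|_{L^\infty}$.

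Next, subtracting the representation formula for $T_1$ and for $T_2$ and writing $z:=x+\operatorname{sgn}(v)y$, the integrand decomposes as
\[
\rho_h(z)\big(\alpha\mathcal{M}_{T_1(z)}(v)+(1-\alpha)\mathcal{M}_{\tau(z)}(v)\big)+\alpha\,\rho_{g^{T_2}}(z)\big(\mathcal{M}_{T_1(z)}(v)-\mathcal{M}_{T_2(z)}(v)\big).
\]
Integrating this against $\mathrm{d}v$, respectively $v^{2}\,\mathrm{d}v$, over $v>0$ and $v<0$ and taking the supremum over $x$ (using $\|\rho_h\|_{L^\infty}=\sup_z|\rho_h(z)|$), I would obtain inequalities of the form
\[
\|\rho_h\|_{L^\infty}\le A_\rho\,\|\rho_h\|_{L^\infty}+B_\rho\,\alpha\,\|T_1-T_2\|_{L^\infty},\qquad \|P_h\|_{L^\infty}\le A_P\,\|\rho_h\|_{L^\infty}+B_P\,\alpha\,\|T_1-T_2\|_{L^\infty}.
\]
Here $A_\rho=2\sup_x\int_0^1\big[\alpha\mathcal{K}_1(T_1(x+y),y)+(1-\alpha)\mathcal{K}_1(\tau(x+y),y)\big]\,\mathrm{d}y$, which by integrating \eqref{eq:estimat on F} in $y$ is at most $2\big[(1-\alpha)\underline{\tau}^{-1/4}+\alpha\underline{T}^{-1/4}\big]<1$ because $\underline{\tau}>2^{4}$ and $\alpha$ is small; $A_P$ carries two extra powers of $v$ and is accordingly controlled in terms of $\tfrac1\kappa\big(\alpha\sqrt{\overline{T}}+(1-\alpha)\sqrt{\overline{\tau}}\big)$. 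The coefficients $B_\rho,B_P$ come from the pointwise Lipschitz estimate $|\mathcal{M}_{T_1(z)}(v)-\mathcal{M}_{T_2(z)}(v)|\le|T_1(z)-T_2(z)|\sup_{T}|\partial_T\mathcal{M}_T(v)|$ with $\partial_T\mathcal{M}_T(v)=\mathcal{M}_T(v)\big(\tfrac{v^{2}}{2T^{2}}-\tfrac{1}{2T}\big)$, integrated against the Duhamel kernel $\tfrac{e^{-(1-y)/\kappa|v|}}{\kappa|v|(1-e^{-1/\kappa|v|})}$ exactly as in the proofs of Lemmas~\ref{lem: bounds_density}--\ref{lem: bounds_press} — in particular splitting the $y$-integral near $y=1$ to absorb the $(1-y)^{-1}$ singularity of the kernel — together with the upper bound $\|\rho_{g^{T_2}}\|_{L^\infty}\lesssim\big[\tfrac{\alpha}{\kappa\underline{T}^{1/4}}+\tfrac{1-\alpha}{\kappa\underline{\tau}^{1/4}}\big]^{2}$ from Lemma~\ref{lem: bounds_density}. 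Solving the first inequality (legitimate since $A_\rho<1$) gives $\|\rho_h\|_{L^\infty}\le\tfrac{B_\rho}{1-A_\rho}\,\alpha\,\|T_1-T_2\|_{L^\infty}$, and substituting into the second gives a bound of the same type on $\|P_h\|_{L^\infty}$.

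Finally I would plug these two bounds into the inequality of the second paragraph. Every resulting term carries a factor $\alpha$ out front, and collecting the accumulated powers of $\underline{T},\underline{\tau},\overline{T},\overline{\tau},\kappa$ — the two denominators $1-[\alpha\underline{T}^{-1/2}+(1-\alpha)\underline{\tau}^{-1/2}]$, the factor $1-2[(1-\alpha)\underline{\tau}^{-1/4}+\alpha\underline{T}^{-1/4}]$ from $(1-A_\rho)^{-1}$, the fourth power of $\big[\tfrac{\alpha}{\underline{T}^{1/4}}+\tfrac{1-\alpha}{\underline{\tau}^{1/4}}\big]$ produced by using the density upper bound twice, and the $\kappa$- and $\underline{T}$-dependent constants coming from the kernel estimates — reproduces, up to the fixed numerical constants, exactly the quantity whose membership in $[0,1)$ defines $\mathcal{C}_{\underline{T},\underline{\tau},\kappa}$. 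Hence $\|\mathcal{F}(T_1)-\mathcal{F}(T_2)\|_{L^\infty}\le C_{\alpha,\underline{T},\underline{\tau},\kappa}\|T_1-T_2\|_{L^\infty}$ with $C_{\alpha,\underline{T},\underline{\tau},\kappa}=\mathcal{O}(\alpha)\in[0,1)$ for $\alpha\in\mathcal{C}_{\underline{T},\underline{\tau},\kappa}$, as claimed. The step I expect to be the main obstacle is the third paragraph: producing clean enough bounds on the temperature-differentiated kernel integrals — controlling the extra $v^{2}/T^{2}$ weight coming from $\partial_T\mathcal{M}_T$ against the $(1-y)^{-1}$ singularity of the Duhamel kernel — while simultaneously keeping the absorbing coefficient $A_\rho$ strictly below $1$, which is precisely where the hypothesis $\underline{\tau}>2^{4}$ and the factor $1-2[(1-\alpha)\underline{\tau}^{-1/4}+\alpha\underline{T}^{-1/4}]$ in the definition of $\mathcal{C}_{\underline{T},\underline{\tau},\kappa}$ intervene.
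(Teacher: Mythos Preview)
Your proposal is correct and follows essentially the same approach as the paper: the same algebraic identity for $\mathcal{F}(T_1)-\mathcal{F}(T_2)$, the same decomposition of the Duhamel integrand into a $\rho_h$-term and an $\alpha\,\rho_{g^{T_2}}(\mathcal{M}_{T_1}-\mathcal{M}_{T_2})$-term, the same self-consistent absorption inequality for $\|\rho_h\|_{L^\infty}$ with coefficient $2[(1-\alpha)\underline{\tau}^{-1/4}+\alpha\underline{T}^{-1/4}]<1$, and the same Lipschitz control on the Maxwellian via $\partial_T\mathcal{M}_T$. The paper organizes the last step by differentiating the integrated kernels $\mathcal{K}_1,\mathcal{K}_2$ in the temperature argument and optimizing a velocity cutoff $v_*$, but this is exactly your $\partial_T\mathcal{M}_T$ estimate carried under the $v$-integral.
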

 \begin{proof}
We fix two functions $T_1, T_2 \in C(\mathbb{T})$ and using the upper and lower bounds on the density and the upper bound on the pressure, we calculate: 
 
 \begin{equation}\label{eq1:contract_F}
 \begin{split} 
& \vert \mathcal{F} (T_1)(x) -\mathcal{F} (T_2)(x) \vert \leq \\ &\left( 1 - \Big[ \alpha \underline{T}^{-1/2}  + (1-\alpha) \underline{\tau}^{-1/2} \Big] \right)^{-2}  
 \bigg\{
 \left[ \frac{\alpha}{\kappa \underline{T}^{1/4}} + \frac{1-\alpha}{\kappa \underline{\tau}^{1/4}} \right]^2 \left[ \alpha \sqrt{\overline{T}}+ (1-\alpha)  \sqrt{\overline{\tau}} \right]  \vert \rho_{g^{T_1}} - \rho_{g^{T_2}} \vert (x) +  \\ & 
 \qquad\qquad\qquad \qquad\qquad\qquad \qquad\qquad\qquad
  + 4  \left[ \frac{\alpha}{\kappa \underline{T}^{1/4}} + \frac{1-\alpha}{\kappa \underline{\tau}^{1/4}} \right]^2 \vert P_{g^{T_1}} - P_{g^{T_2}}  \vert (x) \bigg\}. 
  \end{split}
 \end{equation}
  Now we estimate the difference of the densities using the representation of our steady state: 
  \begin{equation}
 \begin{split} 
  \vert \rho_{g^{T_1}} - \rho_{g^{T_2}} \vert (x) & = \Bigg\vert \int_0^1 \int_{0}^{\infty} \frac{e^{y/\kappa |v|}}{\kappa |v| (e^{1/\kappa|v|}-1)} \Big[ \rho_{g^{T_1}} (x+y) (\alpha \mathcal{M}_{T_1(x+y)}(v) + (1-\alpha)\mathcal{M}_{\tau(x+y)}(v)) - \\ & -
  \rho_{g^{T_2}}(x+y) (\alpha \mathcal{M}_{T_2(x+y)}(v) + (1-\alpha)\mathcal{M}_{\tau(x+y)}(v)) 
    \Big]   \mathrm{d}v  \mathrm{d}y
  \Bigg\vert  + \\ & 
  \Bigg\vert \int_0^1 \int_{-\infty}^{0} \frac{e^{y/\kappa |v|}}{\kappa |v| (e^{1/\kappa|v|}-1)} \Big[ \rho_{g^{T_1}} (x-y) (\alpha \mathcal{M}_{T_1(x-y)}(v) + (1-\alpha)\mathcal{M}_{\tau(x-y)}(v)) - \\ & -
  \rho_{g^{T_2}}(x-y) (\alpha \mathcal{M}_{T_2(x-y)}(v) + (1-\alpha)\mathcal{M}_{\tau(x-y)}(v)) 
    \Big]   \mathrm{d}v  \mathrm{d}y
  \Bigg\vert .
  \end{split}
 \end{equation}
 We look at the positive velocities and write 
 \begin{equation}
 \begin{split} 
& \int_0^1 \int_{0}^{\infty} \frac{e^{y/\kappa |v|}}{\kappa |v| (e^{1/\kappa|v|}-1)}
 \Big[ \rho_{g^{T_1}} (x+y)
  (\alpha \mathcal{M}_{T_1(x+y)}(v) + (1-\alpha)\mathcal{M}_{\tau(x+y)}(v)) - \\
 &\qquad \qquad \qquad \qquad- \rho_{g^{T_2}}(x+y) (\alpha \mathcal{M}_{T_2(x+y)}(v) + (1-\alpha)\mathcal{M}_{\tau(x+y)}(v)) 
    \Big]   \mathrm{d}v  \mathrm{d}y = \\ & 
     \int_0^1  (\rho_{g^{T_1}} - \rho_{g^{T_2}})(x+y) \big[(1-\alpha) \mathcal{K}_1(\tau,y) + \alpha \mathcal{K}_1(T_2,y) \big]  \mathrm{d}y
      +\\ &\qquad \qquad \qquad \qquad+ 
     \alpha  \int_0^1 \rho_{g^{T_1}}(x+y) \big[ \mathcal{K}_1(T_1,y) -  \mathcal{K}_1(T_2,y)  \big]  \mathrm{d}y 
 \end{split}
 \end{equation}
 where $\mathcal{K}_1$ is the integral kernel defined in \eqref{eq:def of F}. 
 Together with the negative velocities then 
  \begin{equation}\label{eq:est1 for contraction}
 \begin{split} 
& \vert \rho_{g^{T_1}} - \rho_{g^{T_2}} \vert (x) \lesssim  2 \alpha \|\rho_{g^{T_1}}\|_{\infty} \int_0^1 \vert \mathcal{K}_1(T_1(x+y),y) -  \mathcal{K}_1(T_2(x+y),y)  \vert + \\ 
&+ \vert \mathcal{K}_1(T_1(x-y),y) -  \mathcal{K}_1(T_2(x-y),y)  \vert   \mathrm{d}y  \\ & 
  + \int_0^1  \vert (1-\alpha) \mathcal{K}_1(\tau(x+y),y) + \alpha  \mathcal{K}_1(T_2(x+y),y)  \vert  \vert \rho_{g^{T_1}} - \rho_{g^{T_2}}  \vert (x+y)    \mathrm{d} y
   \\ & +  \int_0^1  \vert (1-\alpha) \mathcal{K}_1(\tau(x-y),y) + \alpha  \mathcal{K}_1(T_2(x-y),y)  \vert  \vert\rho_{g^{T_1}} - \rho_{g^{T_2}}  \vert (x-y)    \mathrm{d} y \\ &
  \lesssim 8 \alpha  \left[ \frac{\alpha}{\kappa \underline{T}^{1/4}} + \frac{1-\alpha}{\kappa \underline{\tau}^{1/4}} \right]^2 \times \\ 
  &\times \int_0^1 \left\vert \mathcal{K}_1(T_1(x+y),y) -  \mathcal{K}_1(T_2(x+y),y)  \right\vert
   + \left\vert \mathcal{K}_1(T_1(x-y),y) -  \mathcal{K}_1(T_2(x-y),y)  \right\vert   \mathrm{d}y  
   \\ & + 2 \|\rho_{g^{T_1}} - \rho_{g^{T_2}}  \|_{\infty} \big[ (1-\alpha) \underline{\tau}^{-1/4} + \alpha  \underline{T}^{-1/4} \big]
 \end{split}
 \end{equation}
 by the upper bound on $ \mathcal{K}_1$ in \eqref{eq:estimat on F}. 
 We notice now that $  \mathcal{K}_1$ satisfies 
 $$ \vert \mathcal{K}_1(T_1(y),y) -  \mathcal{K}_1(T_2(y),y)  \vert\leq C \vert T_1(y) - T_2(y) \vert $$ for a constant $C$ that is integrable in $y \in \mathbb{T}$. Indeed 
if  we compute 
 \begin{align*} 
 \frac{d}{dt}\mathcal{K}_1(t,y) &=  \frac{d}{dt} \int_0^{\infty} \frac{e^{-(1-y)/\kappa|v|}}{\kappa|v|(1-e^{-1/\kappa|v|})} \mathcal{M}_{T(x+y)}(v)  \mathrm{d}v \\ & 
 =  \int_0^{\infty} \frac{e^{-(1-y)/\kappa|v|}}{\kappa|v|(1-e^{-1/\kappa|v|})} \left[-\frac{1}{2t^{3/2}} \frac{e^{-v^2/2t}}{\sqrt{2\pi t}} + \frac{v^2}{2t^2}  \frac{e^{-v^2/2t}}{\sqrt{2\pi t}}   \right]  \mathrm{d}v   \\
 & = \frac{1}{2t^2} \int_0^{\infty} \frac{e^{-(1-y)/\kappa|v|}}{\kappa|v|(1-e^{-1/\kappa|v|})} \mathcal{M}_t(v) (-t+v^2)  \mathrm{d}v 
 \leq \frac{1}{2t^2} \int_0^{\infty} \frac{e^{-(1-y)/\kappa|v|}}{\kappa|v|(1-e^{-1/\kappa|v|})} \mathcal{M}_t(v) v^2  \mathrm{d}v.
 \end{align*} 
 With similar estimates as above, i.e. splitting the integral as $\int_0^{v_*} \cdots  \mathrm{d}v + \int_{v_*}^{\infty} \cdots  \mathrm{d}v$, we write 
  \begin{align*} 
   \frac{d}{dt}\mathcal{K}_1(t,y) &\lesssim \frac{1}{2t^2} \left\{ v_* + \left( 1+\frac{1}{e(1-y)}\right)\frac{1}{\kappa v_*} \right\},
  \end{align*} 
 which optimising over $v_*$, gives
 $$\frac{d}{dt} \mathcal{K}_1 (t,y) \lesssim \frac{1}{2t^2} \sqrt{\left( 1+\frac{1}{e(1-y)}\right) \frac{1}{\kappa}}. $$
 This implies that indeed 
 \begin{align} 
 \vert \mathcal{K}_1(T_1(y),y) -  \mathcal{K}_1(T_2(y),y)  \vert \leq   \frac{1}{2t^2} \sqrt{\left( 1+\frac{1}{e(1-y)}\right) \frac{1}{\kappa}}\ \vert T_1(y) - T_2(y) \vert.
 \end{align} 
 
 Inserting this then in \eqref{eq:est1 for contraction} implies 
  \begin{equation}\label{eq:est2 for contraction}
 \begin{split} 
 \vert \rho_{g^{T_1}} - \rho_{g^{T_2}} \vert (x) 
&  \lesssim 8 \alpha  \left[ \frac{\alpha}{\kappa \underline{T}^{1/4}} + \frac{1-\alpha}{\kappa \underline{\tau}^{1/4}} \right]^2
 2 \|T_1-T_2 \|_{L^{\infty}(\mathbb{T})} \int_0^1   \frac{1}{2 \underline{T}^2} \sqrt{\left( 1+\frac{1}{e(1-y)}\right) \frac{1}{\kappa}}   \mathrm{d}y  \\
  &+ 2 \|\rho_{g^{T_1}} - \rho_{g^{T_2}}  \|_{L^{\infty}(\mathbb{T})} \big[ (1-\alpha) \underline{\tau}^{-1/4} + \alpha  \underline{T}^{-1/4} \big].
 \end{split}
 \end{equation}
 There is a finite constant $C$ then so that 
  \begin{equation}\label{eq:est3 for contraction}
 \begin{split} 
 \vert \rho_{g^{T_1}} - \rho_{g^{T_2}} \vert (x) 
&  \lesssim \frac{8 C \alpha}{\sqrt{\kappa}  \underline{T}^2}  \left[ \frac{\alpha}{\kappa \underline{T}^{1/4}} + \frac{1-\alpha}{\kappa \underline{\tau}^{1/4}} \right]^2  \|T_1-T_2 \|_{L^{\infty}(\mathbb{T})}\\ & 
+ 2 \|\rho_{g^{T_1}} - \rho_{g^{T_2}}  \|_{L^{\infty}(\mathbb{T})} \big[ (1-\alpha) \underline{\tau}^{-1/4} + \alpha  \underline{T}^{-1/4} \big], 
 \end{split}
 \end{equation}
 implying that, after taking the supremum over $x$ in the left-hand side, 
\begin{equation}\label{eq:est4 for contraction}
 \begin{split} 
 \|\rho_{g^{T_1}} - \rho_{g^{T_2}}  \|_{L^{\infty}(\mathbb{T})} \Big( 1- 2 [(1-\alpha) \underline{\tau}^{-1/4} + \alpha\underline{T}^{-1/4} ] \Big) \lesssim  \frac{8 C \alpha}{\sqrt{\kappa}  \underline{T}^2}  \left[ \frac{\alpha}{\kappa \underline{T}^{1/4}} + \frac{1-\alpha}{\kappa \underline{\tau}^{1/4}} \right]^2  \|T_1-T_2 \|_{L^{\infty}(\mathbb{T})}
\end{split}
 \end{equation}
 or, as long as $\Big( 1- 2 [(1-\alpha) \underline{\tau}^{-1/4} + \alpha\underline{T}^{-1/4} ] \Big) > 0$ which is satisfied when 
 $ \underline{\tau} > \underline{T}$, $\underline{\tau}\geq 2^4$ and 
 $\alpha < \frac{2^{-1} - \underline{\tau}^{-1/4} }{ \underline{T}^{-1/4}- \underline{\tau}^{-1/4} },$ we have that 
  \begin{equation}\label{eq:est5 for contraction}
 \begin{split} 
 \|\rho_{g^{T_1}} - \rho_{g^{T_2}}  \|_{L^{\infty}(\mathbb{T})}  \lesssim  \frac{8 C \alpha}{\sqrt{\kappa}  \underline{T}^2}  
 \frac{\left[ \frac{\alpha}{\kappa \underline{T}^{1/4}} + \frac{1-\alpha}{\kappa \underline{\tau}^{1/4}} \right]^2}{\Big( 1- 2 [(1-\alpha) \underline{\tau}^{-1/4} + \alpha\underline{T}^{-1/4} ] \Big)}   \|T_1-T_2 \|_{L^{\infty}(\mathbb{T})}.
\end{split}
 \end{equation}
  
As a next step we estimate the difference of the pressures:
 \begin{equation}
 \begin{split} 
  \vert P_{g^{T_1}} - P_{g^{T_2}} \vert (x) & = 
  \Bigg\vert \int_0^1 \int_{0}^{\infty} \frac{|v| e^{y/\kappa |v|}}{\kappa (e^{1/\kappa|v|}-1)} \Big[ \rho_{g^{T_1}} (x+y) (\alpha \mathcal{M}_{T_1(x+y)}(v) + (1-\alpha)\mathcal{M}_{\tau(x+y)}(v)) - \\ & -
  \rho_{g^{T_2}}(x+y) (\alpha \mathcal{M}_{T_2(x+y)}(v) + (1-\alpha)\mathcal{M}_{\tau(x+y)}(v)) 
    \Big]   \mathrm{d}v  \mathrm{d}y
  \Bigg\vert  + \\ & 
  \Bigg\vert \int_0^1 \int_{-\infty}^{0} \frac{|v| e^{y/\kappa |v|}}{\kappa (e^{1/\kappa|v|}-1)} \Big[ \rho_{g^{T_1}} (x-y) (\alpha \mathcal{M}_{T_1(x-y)}(v) + (1-\alpha)\mathcal{M}_{\tau(x-y)}(v)) - \\ & -
  \rho_{g^{T_2}}(x-y) (\alpha \mathcal{M}_{T_2(x-y)}(v) + (1-\alpha)\mathcal{M}_{\tau(x-y)}(v)) 
    \Big]   \mathrm{d}v  \mathrm{d}y
  \Bigg\vert.
  \end{split}
 \end{equation}
Looking at the positive velocities the terms we need to estimate are
\begin{equation}
 \begin{split} 
 \int_0^1 &\Bigg\{ \alpha\int_{0}^{\infty} \frac{|v| e^{y/\kappa |v|}}{\kappa (e^{1/\kappa|v|}-1)}  \mathcal{M}_{T_1(x+y)}(v)( \rho_{g^{T_1}}- \rho_{g^{T_2}} )(x+y)  \mathrm{d}v  +\\ 
 & \qquad \qquad  +
  \alpha  \int_{0}^{\infty} \frac{|v| e^{y/\kappa |v|}}{\kappa (e^{1/\kappa|v|}-1)} \rho_{g^{T_2}}(x+y)(  \mathcal{M}_{T_1(x+y)} -  \mathcal{M}_{T_2(x+y)})(v)  \mathrm{d}v + \\ 
 &\qquad \qquad\qquad \quad+ (1-\alpha)  \int_{0}^{\infty} \frac{|v| e^{y/\kappa |v|}}{\kappa (e^{1/\kappa|v|}-1)} \mathcal{M}_{\tau(x+y)}  ( \rho_{g^{T_1}}- \rho_{g^{T_2}} )(x+y)   \mathrm{d}v \Bigg\}  \mathrm{d}y \\ 
 &  =  \int_0^1 \Bigg\{ \big( (1-\alpha) \mathcal{K}_2(\tau(x+y),y) + \alpha \mathcal{K}_2(T_1(x+y),y) \big)( \rho_{g^{T_1}}- \rho_{g^{T_2}} )(x+y) \\ & \qquad\qquad + 
  \alpha   \rho_{g^{T_2}} (x+y) \big(  \mathcal{K}_2(T_1(x+y),y) - \mathcal{K}_2(T_2(x+y),y) \big)   \Bigg\}  \mathrm{d}y
  \end{split}
 \end{equation}
where 
\begin{equation}
\mathcal{K}_2(T(x+y,y):=\int_{0}^{\infty} \frac{|v| e^{y/\kappa |v|}}{\kappa (e^{1/\kappa|v|}-1)}   \mathcal{M}_{T(x+y)} (v) \mathrm{d}v. 
 \end{equation}
 An upper bound on this integral kernel was computed above in \eqref{eq:upper bound on G}. We have then, collecting also the negative velocities, the following upper bound on the difference of the pressures: 
 \begin{equation} \label{eq:differen_of_Press1}
 \begin{split} 
  \vert P_{g^{T_1}} - P_{g^{T_2}} \vert (x) &\lesssim  \|\rho_{g^{T_1}}- \rho_{g^{T_2}} \|_{L^\infty(\mathbb{T})} 
  \int_0^1 \Big[  \vert (1-\alpha) \mathcal{K}_2(\tau(x+y),y) + \alpha \mathcal{K}_2(T_1(x+y),y) \vert \\ 
  &\qquad\qquad+  \vert (1-\alpha) \mathcal{K}_2(\tau(x-y),y) + \alpha \mathcal{K}_2(T_1(x-y),y)  \vert \Big]  \mathrm{d}y   \\ &
  + \alpha \| \rho_{g^{T_2}}\|_{L^\infty(\mathbb{T})} \int_0^1  \Big[ \vert  \mathcal{K}_2(T_1(x+y),y) - \mathcal{K}_2(T_2(x+y),y)\vert  \\ 
  & \qquad\qquad+  \vert  \mathcal{K}_2(T_1(x-y),y) - \mathcal{K}_2(T_2(x-y),y)\vert \Big]  \mathrm{d}y \\ 
  & \lesssim \frac{16 C \alpha}{\sqrt{\kappa}  \underline{T}^2}  
 \frac{\left[ \frac{\alpha}{\kappa \underline{T}^{1/4}} + \frac{1-\alpha}{\kappa \underline{\tau}^{1/4}} \right]^2}{\Big( 1- 2 [(1-\alpha) \underline{\tau}^{-1/4} + \alpha\underline{T}^{-1/4} ] \Big)}  
\left( (1-\alpha)  \frac{\sqrt{\overline{\tau}}}{\kappa} + \alpha  \frac{\sqrt{\overline{T}}}{\kappa}\right)
  \|T_1-T_2 \|_{L^{\infty}(\mathbb{T})} \\ & 
  +  4\alpha  \left[ \frac{\alpha}{\kappa \underline{T}^{1/4}} + \frac{1-\alpha}{\kappa \underline{\tau}^{1/4}} \right]^2 \int_0^1  \Big[ \vert  \mathcal{K}_2(T_1(x+y),y) - \mathcal{K}_2(T_2(x+y),y)\vert  \\ 
  & \qquad\qquad+  \vert  \mathcal{K}_2(T_1(x-y),y) - \mathcal{K}_2(T_2(x-y),y)\vert \Big]  \mathrm{d}y 
  \end{split}
 \end{equation}
 where we applied the upper bounds on $\mathcal{K}_2$, on $ \|\rho_{g^{T_1}}- \rho_{g^{T_2}} \|_{L^\infty(\mathbb{T})}$ and the constant $C$ includes the integrability of $\sqrt{1+e^{-1}/(1-y)}$. 
 
We next notice that -as in the case of $\mathcal{K}_1$-  the kernel $\mathcal{K}_2$ also satisfies $$ \vert \mathcal{K}_2(T_1(y),y) -  \mathcal{K}_2(T_2(y),y)  \vert\leq C \vert T_1(y) - T_2(y) \vert $$ for a constant $C$ that is integrable in $y \in \mathbb{T}$: for that we compute 
 \begin{align*}  
 \frac{d}{dt} \mathcal{K}_2(t,y) &= \frac{1}{t^2} \int_0^{\infty}  \frac{|v| e^{y/\kappa |v|}}{2\kappa (e^{1/\kappa|v|}-1)} \mathcal{M}_t(v) (v^2-t)   \mathrm{d}v = 
 \frac{1}{\sqrt{t}} \int_0^{\infty}   \frac{v}{2\kappa} \frac{e^{y / \kappa |v|\sqrt{t} }}{e^{1/\kappa |v|\sqrt{t}}-1} \mathcal{M}_1(v) (v^2-1) \mathrm{d}v \\ 
 & \lesssim \frac{1}{\sqrt{t}} \int_0^{\infty}   \frac{v^3}{2\kappa} \frac{e^{y / \kappa |v|\sqrt{t} }}{e^{1/\kappa |v|\sqrt{t}}-1} \mathcal{M}_1(v)  \mathrm{d}v  \lesssim 
 \frac{v_*^3}{\kappa \sqrt{t}} + \frac{1}{\kappa t} \left( 1+ \frac{e^{-1}}{1-y}\right) \int_{v_*}^{\infty}  v^3 \mathcal{M}_1(v)  \mathrm{d}v.
 \end{align*}
  This, for sufficiently large $v_*$, is less than 
   \begin{align*} 
   \frac{d}{dt} \mathcal{K}_2(t,y) &\lesssim  \frac{v_*^3}{\kappa \sqrt{t}} +  \frac{1}{\kappa t} \left( 1+ \frac{e^{-1}}{1-y}\right)\frac{(2+v_*^2)}{\sqrt{2\pi}}e^{-v_*^2/2} \lesssim \frac{v_*^3}{\kappa \sqrt{t}} +  \frac{1}{\kappa t}\left( 1+ \frac{e^{-1}}{1-y}\right) \left( \frac{2}{v_*^3}+ \frac{1}{v_*}\right).
    \end{align*}
  Taking then $v_*= \left(\frac{ 1+ \frac{e^{-1}}{1-y} }{\sqrt{t}} \right)^{1/4}$, we have 
    \begin{align*} 
   \frac{d}{dt} \mathcal{K}_2(t,y) &\lesssim \frac{1}{t^{1/8}} \frac{\left( 1+ \frac{e^{-1}}{1-y}\right)^{3/4}}{\kappa}, 
   \end{align*}
  which implies 
   \begin{align*} 
   \vert \mathcal{K}_2(T_1(y),y) -  \mathcal{K}_2(T_2(y),y)  \vert  &\lesssim \frac{1}{\kappa \underline{T}^{1/8}} \left( 1+ \frac{e^{-1}}{1-y}\right)^{3/4}  \vert T_1(y) - T_2(y) \vert. 
   \end{align*}
   Inserting this into \eqref{eq:differen_of_Press1}: 
   \begin{equation} \label{eq:differen_of_Press1}
 \begin{split} 
  \vert P_{g^{T_1}} - P_{g^{T_2}} \vert (x) &\lesssim
  \frac{16 C \alpha}{\sqrt{\kappa}  \underline{T}^2}  
 \frac{\left[ \frac{\alpha}{\kappa \underline{T}^{1/4}} + \frac{1-\alpha}{\kappa \underline{\tau}^{1/4}} \right]^2}{\Big( 1- 2 [(1-\alpha) \underline{\tau}^{-1/4} + \alpha\underline{T}^{-1/4} ] \Big)}  
\left( (1-\alpha)  \frac{\sqrt{\overline{\tau}}}{\kappa} + \alpha  \frac{\sqrt{\overline{T}}}{\kappa}\right)
  \|T_1-T_2 \|_{L^{\infty}(\mathbb{T})} \\ & 
  +  \frac{8\alpha}{\kappa}  \left[ \frac{\alpha}{\kappa \underline{T}^{1/4}} + \frac{1-\alpha}{\kappa \underline{\tau}^{1/4}} \right]^2  \frac{1}{\underline{T}^{1/8}}  \|T_1-T_2 \|_{L^{\infty}(\mathbb{T})}.
    \end{split}
 \end{equation}
   
Finally \eqref{eq1:contract_F} together with \eqref{eq:est5 for contraction} and \eqref{eq:differen_of_Press1} yield a contraction for $\mathcal{F}$: 
\begin{equation} 
\begin{split}
& \sup_{x \in \mathbb{T}} \vert \mathcal{F} (T_1)(x) -\mathcal{F} (T_2)(x) \vert \leq  C_{\alpha, \underline{T}, \underline{\tau}, \kappa} \| T_1-T_2\|_{L^{\infty}(\mathbb{T})}
\end{split} 
\end{equation}
where 
\begin{align*} 
 C_{\alpha, \underline{T}, \underline{\tau}, \kappa} 
 = 8C \alpha& \left( 1 - \Big[ \alpha \underline{T}^{-1/2}  + (1-\alpha) \underline{\tau}^{-1/2} \Big] \right)^2
\left[ \frac{\alpha}{ \underline{T}^{1/4}} + \frac{1-\alpha}{ \underline{\tau}^{1/4}} \right]^4\times 
\\ &  \qquad \qquad
 \times \Bigg\{ 
   \frac{ \left( (1-\alpha)  \sqrt{\overline{\tau}} + \alpha \sqrt{\overline{T}} \right)
 }{ \sqrt{\kappa}\underline{T}^2 \Big( 1- 2 [(1-\alpha) \underline{\tau}^{-1/4} + \alpha\underline{T}^{-1/4} ] \Big) }\left( 1+ \frac{4}{\kappa} \right) + \frac{1}{\kappa \underline{T}^{1/8}}
   \Bigg\} .
   \end{align*} 
   \end{proof} 
  \subsection{Proof of Theorem \ref{thm:existence} }
  Let us define the set 
  $S_{\underline{T}, \bar{T}} := \Big\{ \underline{T} \leq T(x) \leq  \bar{T}  \Big\} $. 
 In order to apply the Banach fixed point theorem we want to ensure that the mapping $\mathcal{F}$ maps the set $S_{\underline{T}, \bar{T}}$ into itself, i.e. 
 \begin{equation} \label{eq:inclus}
 \mathcal{F}(S_{\underline{T}, \bar{T}}) \subset S_{\underline{T}, \bar{T}}.
\end{equation} 
  
  Using the estimates from Lemmas \ref{lem: bounds_density} and \ref{lem: bounds_press}, we get the upper and lower bounds on the image of $\mathcal{F}$,  $\mathcal{F}(T)(x)  = \frac{\int_{\mathbb{T}} |v|^2 g(x,v)  \mathrm{d}v}{\int_{\mathbb{T}} g(x,v)  \mathrm{d}v} $, stated in the following lemma. 
  \begin{lemma}\label{lem:bounds on temperature}
  Under the condition \eqref{eq:constr_LB on P posit} on $\alpha$ and $\underline{\tau}$, we have uniformly in $x \in \mathbb{T}$ that
  \begin{equation}
  \begin{split} 
  &\frac{1}{4} \left[ \left[ \alpha \frac{\underline{T}^{1/2}}{2\pi} + (1-\alpha)\frac{\underline{\tau}^{1/2}}{2\pi}  \right]  
- \frac{1}{\kappa}  \left[ \frac{\alpha}{\kappa \underline{T}^{1/4}} + \frac{1-\alpha}{\kappa \underline{\tau}^{1/4}} \right]^2 \right]   \left[ \frac{\alpha}{\kappa \underline{T}^{1/4}} + \frac{1-\alpha}{\kappa \underline{\tau}^{1/4}} \right]^{-2}
  \lesssim \\ 
  &\mathcal{F}(T)(x)  \lesssim \left[ \frac{\alpha}{\kappa \underline{T}^{1/4}} + \frac{1-\alpha}{\kappa \underline{\tau}^{1/4}} \right]^2 \left[ \alpha \sqrt{\overline{T}}+ (1-\alpha)  \sqrt{\overline{\tau}} \right] \frac{1}{\left( 1 - \Big[ \alpha \underline{T}^{-1/2}  + (1-\alpha) \underline{\tau}^{-1/2} \Big] \right) }.
  \end{split} 
  \end{equation}
  \end{lemma}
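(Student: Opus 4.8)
\smallskip\noindent\textbf{Proof proposal.}
The plan is to read off the claimed two-sided bound on $\mathcal{F}(T)(x)=P_{g^T}(x)/\rho_{g^T}(x)$ by dividing the pressure estimates of Lemma~\ref{lem: bounds_press} by the density estimates of Lemma~\ref{lem: bounds_density}, the only subtlety being to check that the denominators produced in this way are strictly positive under hypothesis \eqref{eq:constr_LB on P posit}.

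First I would verify that \eqref{eq:constr_LB on P posit}, together with the standing assumptions $\underline{T}\le\underline{\tau}$ and $\underline{\tau}$ large, is strong enough to apply both Lemma~\ref{lem: bounds_density} and Lemma~\ref{lem: bounds_press}: it forces $\alpha$ to lie in the admissibility range $\alpha\le\frac{1-\underline{\tau}^{-1/2}}{\underline{T}^{-1/2}-\underline{\tau}^{-1/2}}$ of Lemma~\ref{lem: bounds_density}, it keeps the density lower bound $1-\frac1\kappa[\alpha\underline{T}^{-1/2}+(1-\alpha)\underline{\tau}^{-1/2}]$ (equivalently $1-[\alpha\underline{T}^{-1/2}+(1-\alpha)\underline{\tau}^{-1/2}]$ up to the implied constant) strictly positive, and, as stated in Lemma~\ref{lem: bounds_press}, it is precisely what makes the pressure lower bound $\alpha\frac{\underline{T}^{1/2}}{2\pi}+(1-\alpha)\frac{\underline{\tau}^{1/2}}{2\pi}-\frac1\kappa[\frac{\alpha}{\kappa\underline T^{1/4}}+\frac{1-\alpha}{\kappa\underline\tau^{1/4}}]^2$ non-negative. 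All of these are bookkeeping checks on the explicit expressions, so I would not belabour them.

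For the upper bound on $\mathcal{F}(T)(x)$ I would combine the upper bound on $P_{g^T}(x)$ from Lemma~\ref{lem: bounds_press} with the (now positive) lower bound on $\rho_{g^T}(x)$ from Lemma~\ref{lem: bounds_density}, giving
\[
\mathcal{F}(T)(x)=\frac{P_{g^T}(x)}{\rho_{g^T}(x)}\lesssim \left[\frac{\alpha}{\kappa\underline{T}^{1/4}}+\frac{1-\alpha}{\kappa\underline{\tau}^{1/4}}\right]^2\bigl[\alpha\sqrt{\overline T}+(1-\alpha)\sqrt{\overline\tau}\bigr]\frac{1}{1-[\alpha\underline{T}^{-1/2}+(1-\alpha)\underline{\tau}^{-1/2}]}.
\]
For the lower bound I would instead divide the lower bound on $P_{g^T}(x)$ by the upper bound $\rho_{g^T}(x)\lesssim 4[\frac{\alpha}{\kappa\underline{T}^{1/4}}+\frac{1-\alpha}{\kappa\underline{\tau}^{1/4}}]^2$; the factor $4$ and this square account for the $\tfrac14$ and the $[\cdot]^{-2}$ appearing in the statement. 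Since every estimate invoked is uniform in $x\in\mathbb{T}$, the resulting bounds on $\mathcal{F}(T)(x)$ are uniform in $x$ as well, which completes the argument.

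The step I expect to be the only mildly delicate one is the positivity check in the second paragraph: one must confirm that the single quantitative smallness condition \eqref{eq:constr_LB on P posit} simultaneously controls the density admissibility constraint, the positivity of the density lower bound, and the non-negativity of the pressure lower bound. There is no analytic difficulty here, only a comparison of the explicit polynomial-in-$\alpha$ thresholds, and one can always shrink $\alpha$ (equivalently enlarge $\underline{\tau}$) further so that the most restrictive of them holds.
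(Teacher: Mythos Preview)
Your proposal is correct and follows exactly the paper's approach: the paper does not even spell out a proof, merely stating that the bounds follow from combining the density estimates of Lemma~\ref{lem: bounds_density} with the pressure estimates of Lemma~\ref{lem: bounds_press}, which is precisely the division argument you describe. Your added care about checking positivity of the denominators under \eqref{eq:constr_LB on P posit} makes explicit what the paper leaves implicit.
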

  Thus in order to ensure that \eqref{eq:inclus} holds we need to assume that 
  \begin{equation}
  \begin{split} 
  &\left[ \frac{\alpha}{\kappa \underline{T}^{1/4}} + \frac{1-\alpha}{\kappa \underline{\tau}^{1/4}} \right]^2 \left[ \alpha \sqrt{\overline{T}}+ (1-\alpha)  \sqrt{\overline{\tau}} \right] \left( 1 - \Big[ \alpha \underline{T}^{-1/2}  + (1-\alpha) \underline{\tau}^{-1/2} \Big] \right)^{-1} \leq \bar{T} 
 \\&  \text{ and } \quad
  \frac{1}{4} \left[ \left[ \alpha \frac{\underline{T}^{1/2}}{2\pi} + (1-\alpha)\frac{\underline{\tau}^{1/2}}{2\pi}  \right]  
- \frac{1}{\kappa}  \left[ \frac{\alpha}{\kappa \underline{T}^{1/4}} + \frac{1-\alpha}{\kappa \underline{\tau}^{1/4}} \right]^2 \right]   \left[ \frac{\alpha}{\kappa \underline{T}^{1/4}} + \frac{1-\alpha}{\kappa \underline{\tau}^{1/4}} \right]^{-2} \geq \underline{T}. 
  \end{split} 
  \end{equation}
  By setting $\alpha = 0$ in the equations above we can see that this will be possible provided we first choose
\begin{align} \label{eq:assumption on barT, underlineT}
 \bar{T} > \frac{\sqrt{\bar{\tau}}}{\kappa^2 (\sqrt{\underline{\tau}}-1)}\ \text{ and }\
\underline{T} < \frac{\sqrt{\underline{\tau}}}{8\pi} - \frac{1}{ \kappa}. \end{align}
and then choose $\alpha$ sufficiently small depending on $\bar{\tau}, \underline{\tau}, \bar{T}, \underline{T}$ and $\kappa$.

\begin{proof}[Proof of Theorem \ref{thm:existence}]
Since $\mathcal{F}(S_{\underline{T}, \bar{T}}) \subset S_{\underline{T}, \bar{T}}$ and the mapping $\mathcal{F}$ is contractive as long as $\alpha \in \mathcal{C}_{ \underline{T}, \underline{\tau}, \kappa}$, we conclude that there is a unique fixed point, i.e. there is a unique $T \in C(\mathbb{T})$ so that $T\equiv T_g$. The uniqueness of the fixed point implies that the steady state found that satisfies the condition $\underline{T}\leq T_g(x)\leq \bar{T}$ for all $x \in \mathbb{T}$ and for $\underline{T}, \bar{T}$ satisfying \eqref{eq:assumption on barT, underlineT} with $\alpha$ sufficiently small so that $\alpha \in \mathcal{C}_{ \underline{T}, \underline{\tau}, \kappa}$, is in $(0,1)$. 
\end{proof}

\section{Linear stability}\label{sec:Linear stability}

In this section we prove the dynamical perturbative stability in the weighted $L^2$ space with weight $g(x,v)^{-1}$.

Before linearising our operator around the NESS $g$ and study its stability, 
let us provide with two more properties on $g$ that are going  to be needed for the basic estimates for the stability. That is to prove bounds on higher moments.

\subsection{Bounds on the third and fourth moments of the NESS}

Let us define the normalised third and fourth moments of the steady state $g^T$. 
\begin{defn}[normalised $3$rd and $4$th moments] 
\label{def: 3rd and 4th moment}
Let us define the third moment of the steady state $g^T$ corresponding to temperature $T$ as 
$$ d_3(x) := \frac{1}{\rho_{g^T} T^{3/2}} \int_{\mathbb{R}} v^3 g^T(x,v)  \mathrm{d}v $$
and the fourth moment  by 
$$ d_4(x):=  \frac{1}{\rho_{g^T} T^{2}} \int_{\mathbb{R}} v^4 g^T(x,v)  \mathrm{d}v - 3.$$
\end{defn}

\begin{prop} \label{prop. bounds on 3 and 4 moments} For $\alpha$ sufficiently small and $\underline{\tau} >1$, uniformly in $x \in \mathbb{T}$ we have for the third normalised moment of the non-equilibrium steady state $g$, 
 \begin{align*} 
& \frac{\bar{T}^{-3/2}}{\kappa} \Bigg\{ 
    2 \left[ \frac{\alpha}{\kappa \underline{T}^{1/4}} + \frac{1-\alpha}{\kappa \underline{\tau}^{1/4}} \right]^2 
    (\alpha \underline{T} +
      (1-\alpha) \underline{\tau} ) - 
      \frac{1}{2\sqrt{2\pi}} 
       \left[ \alpha \sqrt{\bar{T}} + (1-\alpha)\sqrt{\bar{\tau}}  \right]
     \Bigg\}
 \\ & \qquad \qquad\qquad\qquad\qquad\qquad\qquad\qquad \lesssim d_3(x) \lesssim 
  \\ & 2^{-1/4} C \Big[ \alpha \sqrt{\bar{T}} +(1-\alpha)  \sqrt{\bar{\tau}}\Big] \left[ \frac{\alpha}{\kappa \underline{T}^{1/4}} + \frac{1-\alpha}{\kappa \underline{\tau}^{1/4}} \right]^2 \Big[ 1 - \Big[ \alpha \underline{T}^{-1/2}  + (1-\alpha) \underline{\tau}^{-1/2} \Big] \Big]^{-1} \underline{T}^{-3/2}, 
 \end{align*} 
 while for the fourth normalised moment of $g$, 
 \begin{align*} 
  &\frac{ \bar{T}^{-2}}{4} \Bigg[ \left[ \frac{\alpha}{\kappa \underline{T}^{1/4}} + \frac{1-\alpha}{\kappa \underline{\tau}^{1/4}} \right]^{-2}  \sqrt{\frac{2}{\pi}}\frac{1}{\kappa} \left( \alpha \underline{T}^{3/2}+ (1-\alpha) \underline{\tau}^{3/2} \right) -  \frac{1}{\kappa^2} \left( \alpha \bar{T} + (1-\alpha) \bar{\tau} \right)
          \Bigg]-3\\ 
          & \qquad \qquad\qquad\qquad\qquad\qquad\qquad\qquad \lesssim d_4(x)  \lesssim 
          \\& \frac{4C}{\sqrt{2}} \underline{T}^{-2} \left[ 1 - \Big[ \alpha \underline{T}^{-1/2}  + (1-\alpha) \underline{\tau}^{-1/2} \Big]\right]^{-1} \left[ \frac{\alpha}{\kappa \underline{T}^{1/4}} + \frac{1-\alpha}{\kappa \underline{\tau}^{1/4}} \right]^2 \left[
    \alpha  \bar{T}^{3/4}  + 
    (1-\alpha) \bar{\tau}^{3/4} 
    \right]
 \end{align*} 
 for a finite constant $C$.
\end{prop}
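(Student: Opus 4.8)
The plan is to mirror the proofs of Lemmas~\ref{lem: bounds_density} and \ref{lem: bounds_press}, the only new ingredient being the evaluation of higher-order weighted moments of the representation formula. Recall that $u_{g^T}\equiv 0$, so every Maxwellian appearing in the representation formula is even in $v$. Feeding the representation formula into $\int_{\mathbb R} v^3 g^T(x,v)\,\mathrm dv$ and $\int_{\mathbb R} v^4 g^T(x,v)\,\mathrm dv$ and splitting the $v$-integral into $v>0$ and $v<0$ (for $v<0$ substitute $v\mapsto -v$, using evenness of $\mathcal M_{T(x-y)},\mathcal M_{\tau(x-y)}$ and $\rho_g(x+\operatorname{sgn}(v)y)=\rho_g(x-y)$), one is reduced to the kernels
\[ \mathcal K_3(S,y)=\int_0^\infty \frac{v^3\,e^{-(1-y)/\kappa|v|}}{\kappa|v|(1-e^{-1/\kappa|v|})}\,\mathcal M_S(v)\,\mathrm dv,\qquad \mathcal K_4(S,y)=\int_0^\infty \frac{v^4\,e^{-(1-y)/\kappa|v|}}{\kappa|v|(1-e^{-1/\kappa|v|})}\,\mathcal M_S(v)\,\mathrm dv. \]
Because $v^4$ is even, $\int_{\mathbb R} v^4 g^T(x,v)\,\mathrm dv=\int_0^1\big[\rho_g(x+y)(\alpha\mathcal K_4(T(x+y),y)+(1-\alpha)\mathcal K_4(\tau(x+y),y))+(\text{mirror at }x-y)\big]\mathrm dy$ is a sum of nonnegative terms, so two-sided bounds follow directly. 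Because $v^3$ is odd, $\int_{\mathbb R} v^3 g^T(x,v)\,\mathrm dv=\int_0^1\big[\rho_g(x+y)\,\mathcal A_3(x+y,y)-\rho_g(x-y)\,\mathcal A_3(x-y,y)\big]\mathrm dy$ with $\mathcal A_3(z,y)=\alpha\mathcal K_3(T(z),y)+(1-\alpha)\mathcal K_3(\tau(z),y)$, i.e.\ a \emph{difference} of nonnegative terms; the upper bound comes from the triangle inequality, while the lower bound is obtained by estimating the $x+y$ contribution from below and the mirrored $x-y$ contribution from above (or vice versa).

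Next I would estimate $\mathcal K_3,\mathcal K_4$ from above and below exactly as for $\mathcal K_1,\mathcal K_2$. For the upper bounds, split $\int_0^\infty=\int_0^{v_*}+\int_{v_*}^\infty$, use the pointwise bound $\frac{e^{-(1-y)/\kappa|v|}}{\kappa|v|(1-e^{-1/\kappa|v|})}\le \frac1\kappa\big(1+\frac{e^{-1}}{1-y}\big)$ on the tail and the uniform-in-$y$ bound on the bulk, then optimise over $v_*$; this is the step that produces the fractional temperature powers ($\bar T^{1/2}$ in the bound for $\mathcal K_3$, $\bar T^{3/4}$ for $\mathcal K_4$) together with the $y$-integrable factor $\big(1+\frac{e^{-1}}{1-y}\big)^{1/2}$, just as $\mathcal K_1\lesssim \frac1\kappa(\cdot)^{1/2}T^{-1/4}$ and $\mathcal K_2\lesssim \frac{\sqrt{T}}{\kappa}(\cdot)^{1/2}$ in the earlier lemmas. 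For the lower bounds, use $e^{-(1-y)/\kappa|v|}\ge 1-\frac{1-y}{\kappa|v|}$ and $(1-e^{-1/\kappa|v|})^{-1}\ge 1$ as in the pressure lower bound, reducing to elementary half-Gaussian moments: $\mathcal K_3(S,y)\gtrsim \frac{S}{2\kappa}-\frac{1-y}{\kappa^2}\frac{\sqrt S}{\sqrt{2\pi}}$ and $\mathcal K_4(S,y)\gtrsim \sqrt{\tfrac2\pi}\,\frac{S^{3/2}}{\kappa}-\frac{1-y}{2\kappa^2}S$, which after the $y$-integration account for the ``$(\alpha\underline T+(1-\alpha)\underline\tau)$'' and ``$\alpha\underline T^{3/2}+(1-\alpha)\underline\tau^{3/2}$'' terms in the statement and their $\kappa^{-2}$ corrections.

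Finally I would combine these kernel estimates with the density bounds of Lemma~\ref{lem: bounds_density}: insert the upper bound on $\rho_g$ where a $\rho_g$ appears to be bounded above (both the upper estimates, and the ``mirror'' term of the $d_3$ lower estimate) and the lower bound on $\rho_g$ where it must be bounded below (the lower estimates), integrate the $y$-integrable prefactors over $(0,1)$, and then divide by $\rho_{g^T}(x)$ and by $T(x)^{3/2}$ (resp.\ $T(x)^2$), using $\underline T\le T(x)\le\bar T$ and the matching side of the density bound, to land on the claimed inequalities. The running hypotheses ``$\alpha$ sufficiently small, $\underline\tau>1$'' are precisely those under which the density lower bound of Lemma~\ref{lem: bounds_density} is positive; no new condition is needed. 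The only genuinely delicate point is the lower bound on $d_3$: it is a difference of nonnegative quantities and may well be negative, so one cannot get it from a bound on $|d_3|$; one must keep the $v>0$ and $v<0$ parts separate and pair a lower bound on one against an upper bound on the other, with the density bounds applied on the matching sides. Structurally this is the statement that in the equilibrium case ($\alpha=0$ with constant $\tau$) the skewness vanishes by the $x\mapsto -x$ / $v\mapsto -v$ symmetry, and the estimate simply quantifies how the spatially varying reservoir $\tau(x)$ and the $\alpha$-coupling break that symmetry — small when $\alpha$ is small. The $d_4$ part, being sign-definite throughout, is routine once the kernel estimates are in hand.
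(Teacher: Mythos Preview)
Your proposal is correct and follows essentially the same approach as the paper: use the representation formula, bound the higher-moment kernels from above by splitting at $v_*$ and optimising (yielding the fractional temperature powers) and from below via $e^{-x}\ge 1-x$, then combine with the density bounds of Lemma~\ref{lem: bounds_density}. You are in fact more careful than the paper about the sign cancellation in the odd moment $\int v^3 g^T\,\mathrm dv$---the paper's proof glosses over how the $v>0$ and $v<0$ half-line contributions are paired---but since only a finite two-sided bound on $d_3$ is needed, both treatments land on the same estimates.
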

  \begin{proof} 
   \emph{Upper bound on $d_3$}: 
  We start with the upper bound on $d_3$ using the representation formula for $g^T$. For positive velocities we write 
  \begin{align*} 
  \int_0^{\infty} v^3 g^T(x,v) \mathrm{d}v &= \int_0^{\infty}  \int_0^1 v^3  \frac{e^{-(1-y)/\kappa |v|}}{\kappa |v| (1-e^{-1/\kappa|v|}) } \rho_g(x+y) \left( \alpha
\mathcal{M}_{T(y+x)} + (1-\alpha) \mathcal{M}_{\tau(y+x)} \right)  \mathrm{d}y  \mathrm{d}v. 
  \end{align*}
  We estimate from above the term 
  $$\alpha \int_0^1 \rho_g(x+y) \left(  \int_0^{\infty}   \frac{|v|^2 e^{-(1-y)/\kappa }}{\kappa (1-e^{-1/\kappa|v|}) } \mathcal{M}_{T(y+x)}(v)  \mathrm{d}v  \right)  \mathrm{d}y $$
 as usual by splitting into small and large velocities and optimising. This leads for $v_* > \delta$ for small $\delta$, to the upper bound  
 \begin{align*} 
 \alpha \int_0^1\rho_g(x+y) \left[ v_* \sqrt{\frac{T}{2}} + \left(1+\frac{1}{e(1-y)} \right) \frac{\sqrt{T}}{v_*} \right]   \mathrm{d}y
  \end{align*}
  which for $v_*=\sqrt{\sqrt{2}(1+1/(e(1-y)))}$, becomes 
  $$ \alpha 2^{-1/4} \sqrt{\bar{T}} \int_0^1\rho_g(x+y) \sqrt{\left(1+\frac{1}{e(1-y)} \right) }  \mathrm{d}y \lesssim 42^{-1/4}C \sqrt{\bar{T}} \alpha   \left[ \frac{\alpha}{\kappa \underline{T}^{1/4}} + \frac{1-\alpha}{\kappa \underline{\tau}^{1/4}} \right]^2.  $$
  Eventually we have indeed, after gathering also the negative velocities and the terms with the $\tau$, 
 \begin{align*} 
 d_3(x) \lesssim 2^{-1/4} C \Big[ \alpha \sqrt{\bar{T}} +(1-\alpha)  \sqrt{\bar{\tau}}\Big] \left[ \frac{\alpha}{\kappa \underline{T}^{1/4}} + \frac{1-\alpha}{\kappa \underline{\tau}^{1/4}} \right]^2 \Big[ 1 - \Big[ \alpha \underline{T}^{-1/2}  + (1-\alpha) \underline{\tau}^{-1/2} \Big] \Big]^{-1} \underline{T}^{-3/2}
 \end{align*} 
 where in the last line we applied the lower bound on $\rho_g(x)$ to get the upper bound on $d_3(x)$. 
 
 \noindent
   \emph{Lower bound on $d_3$}:
 Regarding the lower bound on $d_3$, we use the inequality $e^{-(1-y)/\kappa |v|} \geq 1-\frac{1-y}{\kappa |v|}$ to write 
 
 \begin{align*} 
 \alpha \int_0^1 \rho_g(x+y) &\left(  \int_0^{\infty}   \frac{|v|^2 e^{-(1-y)/\kappa }}{\kappa (1-e^{-1/\kappa|v|}) } \mathcal{M}_{T(y+x)}(v)  \mathrm{d}v  \right)  \mathrm{d} y \gtrsim \\ &  
  \alpha \int_0^1 \rho_g(x+y) 
   \left(  \int_0^{\infty}   \frac{|v|^2}{\kappa} \left(1- \frac{1-y}{\kappa |v|} \right) \mathcal{M}_{T(y+x)}(v)  \mathrm{d}v  \right)
  dy \\ & =  \alpha \int_0^1 \rho_g(x+y) \left[   \frac{T(x+y)}{2\kappa} - \frac{1-y}{\kappa}\frac{\sqrt{T(x+y)}}{\sqrt{2 \pi}} \right]  \mathrm{d}y \\ 
  & \gtrsim \alpha \frac{\underline{T} }{2 \kappa} - \frac{\alpha}{2} \frac{\sqrt{\bar{T}} }{\sqrt{2\pi}\kappa} 4  \left[ \frac{\alpha}{\kappa \underline{T}^{1/4}} + \frac{1-\alpha}{\kappa \underline{\tau}^{1/4}} \right]^2.
 \end{align*} 
 This in total gives 
  \begin{align*} 
 d_3(x) &\gtrsim 
  \frac{1}{\|\rho_g\|_{L^{\infty}} \bar{T}^{3/2}} 
  \bigg\{ 
   \alpha \left(  \frac{\underline{T} }{2 \kappa} -  \frac{\sqrt{\bar{T}} }{2\sqrt{2\pi}\kappa}
    4  \left[ \frac{\alpha}{\kappa \underline{T}^{1/4}} + \frac{1-\alpha}{\kappa \underline{\tau}^{1/4}} \right]^2 \right)
    \\ &\qquad\qquad\qquad\qquad\qquad\qquad + (1-\alpha)  \left(  \frac{\underline{\tau} }{2 \kappa} - \frac{\sqrt{\bar{\tau}} }{2\sqrt{2\pi}\kappa} 4  \left[ \frac{\alpha}{\kappa \underline{\tau}^{1/4}} + \frac{1-\alpha}{\kappa \underline{\tau}^{1/4}} \right]^2 \right)
    \bigg\} 
    \\ &= 
     \frac{\bar{T}^{-3/2}}{\kappa} \Bigg\{ 
    2 \left[ \frac{\alpha}{\kappa \underline{\tau}^{1/4}} + \frac{1-\alpha}{\kappa \underline{\tau}^{1/4}} \right]^2 
    (\alpha \underline{T} +
      (1-\alpha) \underline{\tau} ) - 
      \frac{1}{2\sqrt{2\pi}} 
       \left[ \alpha \sqrt{\bar{T}} + (1-\alpha)\sqrt{\bar{\tau}}  \right]
     \Bigg\}.
   \end{align*} 
   The parameter $\alpha$ being small enough ensures for the non-negativity of the lower bound. 
   
   \noindent
   \emph{Upper bound on $d_4$}:
  We now continue with the upper bound on $d_4$. From the representation formula for $g^T$ for positive velocities we write 
  \begin{align*} 
  \int_0^{\infty} v^4 g^T(x,v) \mathrm{d}v &= \int_0^{\infty}  \int_0^1 v^3  \frac{e^{-(1-y)/\kappa }}{\kappa |v| (1-e^{-1/\kappa|v|}) } \rho_g(x+y) \left( \alpha
\mathcal{M}_{T(y+x)} + (1-\alpha) \mathcal{M}_{\tau(y+x)} \right)  \mathrm{d}y  \mathrm{d}v. 
  \end{align*}
 Then the term 
  \begin{align*}
   \alpha \int_0^1 \rho_g(x+y) \left(  \int_0^{\infty}   \frac{|v|^3 e^{-(1-y)/\kappa }}{\kappa (1-e^{-1/\kappa|v|}) } \mathcal{M}_{T(y+x)}(v)  \mathrm{d}v  \right)  \mathrm{d}y
   \end{align*}
   for the same reasons as before in $d_3$ is bounded by above as
    \begin{equation}\label{eq:upp bound d4_1}
    \begin{split} 
     &\alpha \int_0^1 \rho_g(x+y) \left[ \int_0^{v_*} |v|^3\mathcal{M}_{T(y+x)}(v)  \frac{e^{-(1-y)/\kappa }}{\kappa |v| (1-e^{-1/\kappa|v|}) }  \mathrm{d}v + 
  \int_{v_*}^{\infty} |v|^3\mathcal{M}_{T(y+x)}(v)  \frac{e^{-(1-y)/\kappa }}{\kappa |v| (1-e^{-1/\kappa|v|}) }  \mathrm{d}v 
   \right]    \mathrm{d}y \\ 
   &\lesssim   \alpha \int_0^1 \rho_g(x+y)\left[ v_* \int_0^{v_*} |v|^2\mathcal{M}_{T(y+x)}(v) \mathrm{d}v + \left( 1+\frac{e^{-1}}{1-y}\right)\int_{v_*}^{\infty} |v|^3\mathcal{M}_{T(y+x)}(v)  \mathrm{d}v
    \right]    \mathrm{d}y \\
    &\lesssim  \alpha \int_0^1 \rho_g(x+y)\left[  v_* \frac{T(x+y)}{\sqrt{2}} + \left( 1+\frac{e^{-1}}{1-y}\right)  \sqrt{\frac{T(x+y)}{2\pi}} e^{-v_*^2/(2T(x+y))}(v_*+2T(x+y)) 
     \right]    \mathrm{d}y 
     \\
    &\lesssim  \alpha \int_0^1 \rho_g(x+y)
    \left[
    v_* \frac{T(x+y)}{\sqrt{2}} + \left( 1+\frac{e^{-1}}{1-y}\right)
   \left( 
   \frac{2^3\sqrt{T(x+y)}}{\sqrt{2\pi}}
   \frac{1}{v_*} +
    \frac{2 T(x+y)^{3/2} }{\sqrt{2\pi}}
    \frac{1}{v_*^3}
   \right)
    \right]    \mathrm{d}y.
  \end{split}
  \end{equation}
  Now we optimise over $v_*$, i.e. we need to solve $v_*^4\frac{T}{\sqrt{2}} = \left( 1+\frac{e^{-1}}{1-y}\right) \left( \frac{2^3\sqrt{T}}{\sqrt{2\pi}}v_*^2 + \frac{2 T^{3/2} }{\sqrt{2\pi}} \right)$ and 
  the zeros of this polynomial are of order $T^{-1/4} \left( 1+\frac{e^{-1}}{1-y}\right)^{1/2} $. Thus we take $v_* = T^{-1/4} \left( 1+\frac{e^{-1}}{1-y}\right)^{1/2}$, 
  so that going back to \eqref{eq:upp bound d4_1} we have the upper bound
   \begin{equation}\label{eq:upp bound d4_2}
    \begin{split} 
     \alpha \int_0^1 \rho_g(x+y) &T(x+y)^{-1/4} \left( 1+\frac{e^{-1}}{1-y}\right)^{1/2} \frac{T(x+y)}{\sqrt{2}}  \mathrm{d}y 
     \\ & \leq  \frac{4 \alpha}{\sqrt{2}}  \bar{T}^{3/4} \left[ \frac{\alpha}{\kappa \underline{T}^{1/4}} + \frac{1-\alpha}{\kappa \underline{\tau}^{1/4}} \right]^2 
     \int_0^1 \left( 1+\frac{e^{-1}}{1-y}\right)^{1/2}  \mathrm{d}y
     \\ & \lesssim C  \frac{4 \alpha}{\sqrt{2}}  \bar{T}^{3/4} \left[ \frac{\alpha}{\kappa \underline{T}^{1/4}} + \frac{1-\alpha}{\kappa \underline{\tau}^{1/4}} \right]^2 
    \end{split}
  \end{equation}
  for some finite constant $C$, where we also applied the upper bound on the density $\rho_g$ given by lemma \ref{lem: bounds_density}. 
  Thus in total, together with the negative velocities and the $(1-\alpha)$ term, we have the upper bound 
   \begin{align*} 
   \int_\mathbb{R} |v|^4 g(x,v)  \mathrm{d}v \lesssim
     \frac{4C}{\sqrt{2}} \left[ \frac{\alpha}{\kappa \underline{T}^{1/4}} + \frac{1-\alpha}{\kappa \underline{\tau}^{1/4}} \right]^2 \left\{
    \alpha  \bar{T}^{3/4}  + 
    (1-\alpha) \bar{\tau}^{3/4} 
    \right\}
     \end{align*} implying together with the lower bound on the density that 
      \begin{align*} 
      d_4(x)\lesssim \underline{T}^{-2} \left[ 1 - \Big[ \alpha \underline{T}^{-1/2}  + (1-\alpha) \underline{\tau}^{-1/2} \Big]\right]^{-1}\frac{4C}{\sqrt{2}} \left[ \frac{\alpha}{\kappa \underline{T}^{1/4}} + \frac{1-\alpha}{\kappa \underline{\tau}^{1/4}} \right]^2 \left\{
    \alpha  \bar{T}^{3/4}  + 
    (1-\alpha) \bar{\tau}^{3/4} 
    \right\}.
      \end{align*}
     \noindent
   \emph{Lower bound on $d_4$}:
      Finally for the lower bound we write 
       \begin{align*} 
        \alpha \int_0^1 \rho_g(x+y) &\left(  \int_0^{\infty}   \frac{|v|^3 e^{-(1-y)/\kappa }}{\kappa (1-e^{-1/\kappa|v|}) } \mathcal{M}_{T(y+x)}(v)  \mathrm{d}v  \right)  \mathrm{d}y\\ &  \gtrsim
       \alpha \int_0^1 \rho_g(x+y) 
   \left(  \int_0^{\infty}   \frac{|v|^3}{\kappa} \left(1- \frac{1-y}{\kappa |v|} \right) \mathcal{M}_{T(y+x)}(v)  \mathrm{d}v  \right)
   \mathrm{d}y    
 \\ & \gtrsim   \alpha \int_0^1 \rho_g(x+y) \left[
\sqrt{\frac{2}{\pi}}\frac{1}{\kappa} T(x+y)^{3/2} - \frac{(1-y)}{\kappa^2}\frac{T(x+y)}{2} \right]  \mathrm{d}y 
\\ &\gtrsim  
\alpha\sqrt{\frac{2}{\pi}}\frac{1}{\kappa} \underline{T}^{3/2} -   \alpha  \left[ \frac{\alpha}{\kappa \underline{T}^{1/4}} + \frac{1-\alpha}{\kappa \underline{\tau}^{1/4}} \right]^2\frac{\bar{T}}{\kappa^2}.
        \end{align*}
  Finally, together again with the negative velocities, the $(1-\alpha)$ terms and the upper bound on the density, we write 
         \begin{align*} 
         d_4(x) \gtrsim
          &\frac{ \bar{T}^{-2}}{4} \left[ \frac{\alpha}{\kappa \underline{T}^{1/4}} + \frac{1-\alpha}{\kappa \underline{\tau}^{1/4}} \right]^{-2} 
         \Bigg[\alpha \sqrt{\frac{2}{\pi}}\frac{ \underline{T}^{3/2} }{\kappa}-  \alpha \left[ \frac{\alpha}{\kappa \underline{T}^{1/4}} + \frac{1-\alpha}{\kappa \underline{\tau}^{1/4}} 
         \right]^2\frac{\bar{T}}{\kappa^2}  
         \\ &\qquad \qquad\qquad+ (1-\alpha)\sqrt{\frac{2}{\pi}}\frac{ \underline{T}^{3/2} }{\kappa} - (1-\alpha) \left[ \frac{\alpha}{\kappa \underline{T}^{1/4}} + \frac{1-\alpha}{\kappa \underline{\tau}^{1/4}} 
         \right]^2\frac{\bar{T}}{\kappa^2}
         \Bigg] -3
         \\ & = 
         \frac{ \bar{T}^{-2}}{4} 
         \left[ \frac{\alpha}{\kappa \underline{T}^{1/4}} + \frac{1-\alpha}{\kappa \underline{\tau}^{1/4}} \right]^{-2} \Bigg[  \sqrt{\frac{2}{\pi}}\frac{1}{\kappa} \left( \alpha \underline{T}^{3/2}+ (1-\alpha) \underline{\tau}^{3/2} \right) 
          \\ & \qquad \qquad\qquad\qquad-  \left[ \frac{\alpha}{\kappa \underline{T}^{1/4}} + \frac{1-\alpha}{\kappa \underline{\tau}^{1/4}} 
         \right]^2 \frac{1}{\kappa^2} \left( \alpha \bar{T} + (1-\alpha) \bar{\tau} \right) 
         \Bigg]-3\\ 
         & =  \frac{ \bar{T}^{-2}}{4} \Bigg[ \left[ \frac{\alpha}{\kappa \underline{T}^{1/4}} + \frac{1-\alpha}{\kappa \underline{\tau}^{1/4}} \right]^{-2}  \sqrt{\frac{2}{\pi}}\frac{1}{\kappa} \left( \alpha \underline{T}^{3/2}+ (1-\alpha) \underline{\tau}^{3/2} \right) -  \frac{1}{\kappa^2} \left( \alpha \bar{T} + (1-\alpha) \bar{\tau} \right)
          \Bigg]-3.
         \end{align*}
         \end{proof}    

\subsection{Linearizing and splitting the operator}
In this subsection we linearise the operator $\mathcal{L}$ around the non-equilibrium solution $g$: $f(t,x,v) = g(x,v) + \varepsilon h(t,x,v)$. In the following lemma we state the linearised evolution equation that the new unknown $h$ satisfies. 

\begin{lemma}
The linearized equation around the steady state $g$ is
\begin{equation}\label{eq:linearised BGK}
\begin{split}
\partial_t h  = \mathfrak{L}h  &= -v\partial_x h + \frac{1}{\kappa}
\Bigg( \alpha \mathcal{M}_{T_g(x)} 
\int_{-\infty}^{\infty} \left(1 + \frac{uv}{T_g} + \frac{1}{2} \left( \frac{u^2}{T_g} -1 \right)\left( \frac{v^2}{T_g} -1 \right) \right) h(x, u) \mathrm{d} u \\ & 
\qquad\qquad\qquad\qquad\qquad\qquad\qquad\qquad\qquad+ (1-\alpha) \mathcal{M}_{\tau(x)}\int_{-\infty}^{\infty} h(x,u) \mathrm{d}u - h \Bigg)
\end{split}
\end{equation}
\end{lemma}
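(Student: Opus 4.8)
The plan is to compute the Gâteaux derivative at $g$ of the right-hand side of \eqref{eq:NL_BGK}; that is, to substitute $f = g + \varepsilon h$ into $\mathcal{L}f - v\partial_x f$, expand in $\varepsilon$, and collect the $\mathcal{O}(\varepsilon)$ term. The transport term $-v\partial_x f$ is already linear and contributes $-v\partial_x h$, while $-f/\kappa$ contributes $-h/\kappa$. The term $(1-\alpha)\rho_f\mathcal{M}_{\tau(x)}/\kappa$ is also linear in $f$, since $\mathcal{M}_{\tau(x)}$ is a fixed profile: its derivative is $(1-\alpha)\mathcal{M}_{\tau(x)}\int h(x,u)\,\mathrm{d}u/\kappa$. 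Thus the only genuine computation is the linearization of the nonlinear map $f \mapsto \rho_f\mathcal{M}_{u_f,T_f}$.

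For this, first I would record the perturbed moments. Writing $\delta\rho(x) = \int h\,\mathrm{d}u$, $\delta j(x) = \int u h\,\mathrm{d}u$, $\delta E(x) = \int u^2 h\,\mathrm{d}u$, and using that the steady state satisfies $u_g \equiv 0$ (established above, since $g$ is the steady state of a linear equation of the form \eqref{eq:LBGK} and mass conservation forces the flux to vanish), one gets $\rho_f = \rho_g + \varepsilon\,\delta\rho$, $u_f = \varepsilon\,\delta j/\rho_g + \mathcal{O}(\varepsilon^2)$, and $T_f = T_g + \varepsilon(\delta E - T_g\,\delta\rho)/\rho_g + \mathcal{O}(\varepsilon^2)$; note $u_f$ is already $\mathcal{O}(\varepsilon)$, so $u_f^2$ does not contribute at first order and the correction to $T_f$ comes only from the second moment.

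Next I would differentiate the Maxwellian. Using $\log\mathcal{M}_{u_f,T_f}(v) = -\tfrac12\log(2\pi T_f) - (v-u_f)^2/(2T_f)$ together with $u_g = 0$, the $\varepsilon$-derivative at $\varepsilon = 0$ equals $\tfrac{v\,\dot u_f}{T_g} + \tfrac{\dot T_f}{2T_g}\bigl(\tfrac{v^2}{T_g}-1\bigr)$, so $\partial_\varepsilon\mathcal{M}_{u_f,T_f}\big|_0 = \mathcal{M}_{T_g}\bigl[\tfrac{v\,\dot u_f}{T_g} + \tfrac{\dot T_f}{2T_g}(\tfrac{v^2}{T_g}-1)\bigr]$. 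Combining with the product rule and $\dot u_f = \delta j/\rho_g$, $\dot T_f = (\delta E - T_g\,\delta\rho)/\rho_g$, one obtains $\partial_\varepsilon(\rho_f\mathcal{M}_{u_f,T_f})\big|_0 = \mathcal{M}_{T_g}\bigl[\delta\rho + \tfrac{v}{T_g}\delta j + \tfrac{\delta E - T_g\delta\rho}{2T_g}(\tfrac{v^2}{T_g}-1)\bigr]$; rewriting $\delta E - T_g\,\delta\rho = \int(u^2 - T_g)h\,\mathrm{d}u$ produces precisely the kernel $1 + uv/T_g + \tfrac12(u^2/T_g - 1)(v^2/T_g - 1)$ inside the integral. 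Assembling the four contributions and dividing by $\kappa$ yields \eqref{eq:linearised BGK}.

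Since all the algebra is elementary, there is no substantive obstacle; the one point requiring care is the bookkeeping of orders, specifically recognising that $u_g = 0$ kills the first-order contribution of the bulk-velocity perturbation to the temperature and of the $u_f^2$ term in general, so that the linearized Maxwellian acquires the symmetric Hermite-like form above. One should also note that the expansion is legitimate because $T_g$ is bounded away from $0$ (Theorem \ref{thm:existence}) and $\rho_g$ is bounded above and below (Lemma \ref{lem: bounds_density}), so the denominators are uniformly controlled and the $\mathcal{O}(\varepsilon^2)$ remainders are genuinely lower order.
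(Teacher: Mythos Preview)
Your proposal is correct and follows essentially the same approach as the paper: write $f=g+\varepsilon h$, use $u_g=0$ to expand the moments $\rho_f,u_f,T_f$ to first order, linearize the Maxwellian, and assemble the pieces (the paper explicitly invokes $-v\partial_x g+\mathcal{L}g=0$ at the end to discard the zeroth-order terms, which you subsume in ``collect the $\mathcal{O}(\varepsilon)$ term''). The only cosmetic difference is that you linearize $\mathcal{M}_{u_f,T_f}$ via $\partial_\varepsilon\log\mathcal{M}$ in one step, whereas the paper expands $(2\pi T_f)^{-1/2}$ and $\exp\bigl(-|v-u_f|^2/2T_f\bigr)$ separately before recombining; the content is identical.
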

\begin{proof}
We write $f=g+\varepsilon h$ then we have
\[ \rho_f = \rho_g + \varepsilon \rho_h\  \text{ and }\ \rho_f u_f  = \int_{-\infty}^{\infty}  v f(x,v) \mathrm{d}v  = \rho_g u_g+ \varepsilon  \int_{-\infty}^{\infty} h(x,v) v \mathrm{d}v. \]
We remind that $u_g=0$, cf discussion just above Lem. \ref{lem: bounds_press}, and therefore by denoting $m_h:= \int_{-\infty}^{\infty} u h(x,u)\mathrm{d}u$, 
$$ u_f= \frac{\varepsilon m_h}{\rho_g (1+ \varepsilon  \rho_h/\rho_g)}= \frac{\varepsilon m_h}{\rho_g}  + \mathcal{O}(\varepsilon^2). $$
Regarding the temperature we write up to first order in $\varepsilon$
\[ \rho_f\left( T_f + u_f^2\right) = \rho_g T_g + \varepsilon P_h, \] 
and so
\begin{equation}
\begin{split} 
T_f &= \frac{T_g + \varepsilon P_h/\rho_g}{ 1+ \varepsilon\rho_h /\rho_g} - u_f^2 =
 T_g +\varepsilon \left(\frac{P_h}{\rho_g} - \frac{\rho_h}{\rho_g}T_g \right) - \left( u_g + \varepsilon \left( \frac{m_h}{\rho_g} - \frac{\rho_h u_g}{\rho_g} \right)\right)^2 + o(\varepsilon) \\ & = 
T_g + \varepsilon\left(\frac{P_h}{\rho_g} - \frac{\rho_h}{\rho_g}T_g \right) + o(\varepsilon) = 
 T_g\left( 1+ \varepsilon \left(\frac{P_h}{P_g} - \frac{\rho_h}{\rho_g} \right)  \right) + o(\varepsilon) \\ & = 
T_g \left( 1+ \frac{\varepsilon}{\rho_g}\int_{-\infty}^{\infty}\left( \frac{v^2}{T_g}-1 \right) h(x,v) \mathrm{d}v  \right) + o(\varepsilon)
\end{split} 
\end{equation} 
where we used again that $u_g = 0$.
Next we move to the linearisation of the Maxwellian. Up to first order in $\varepsilon$ we write
\begin{equation}
\begin{split} 
 |v-u_f|^2/2T_f &= \left|v-\varepsilon \left(\frac{m_h}{\rho_g}\right)\right|^2 \frac{1}{2T_g} \left[ 1 - \frac{\varepsilon}{\rho_g} \int_{-\infty}^{\infty} \left[\frac{u^2}{T_g}-1 \right] h(x,u) \mathrm{d}u \right] + o(\varepsilon) \\ 
 \\ & = \left[ \frac{v^2}{2T_g} -  \varepsilon v \left( \frac{m_h}{T_g \rho_g}  \right)\right] \left[ 1 - \frac{\varepsilon}{\rho_g} \int_{-\infty}^{\infty} \left[\frac{u^2}{T_g}-1 \right] h(x,u) \mathrm{d}u \right] + o(\varepsilon)\\
 &= \frac{v^2}{2T_g}  - \frac{\varepsilon v^2}{2 T_g \rho_g} \int_{-\infty}^{\infty} \left[\frac{u^2}{T_g}-1 \right] h(x,u) \mathrm{d}u  - \frac{\varepsilon v m_h}{T_g \rho_g} + o(\varepsilon)\\
\end{split} 
\end{equation} 
and so by expanding the exponential near $0$, 
\begin{equation}
\begin{split} 
\exp \left( - \frac{|v-u_f|^2}{2T_f} \right) =\exp \left( -\frac{v^2}{2T_g} \right) \left(1+ \frac{\varepsilon v^2}{2 T_g \rho_g} \int_{-\infty}^{\infty} \left[\frac{u^2}{T_g}-1 \right] h(x,u) \mathrm{d}u  +   \frac{\varepsilon v m_h}{T_g \rho_g}   \right). 
\end{split} 
\end{equation} 
For its normalisation we have by expanding again around $0$ that 
\begin{equation}
\begin{split} 
(2\pi T_f)^{-1/2} &= \frac{1}{\sqrt{2\pi T_g} } \left( 1 - \frac{\varepsilon}{2} \left( \frac{P_h}{P_g} - \frac{\rho_h}{\rho_g}\right) \right) + o(\varepsilon) \\ & 
=\frac{1}{\sqrt{2\pi T_g} }  \left( 1-  \frac{\varepsilon}{2\rho_g}  \int_{-\infty}^{\infty} \left[\frac{u^2}{T_g}-1 \right] h(x,u) \mathrm{d}u  \right) 
+ o(\varepsilon).
\end{split} 
\end{equation} 
Therefore altogether the infinitesimal Maxwellian is
\begin{equation}
\begin{split} 
 \mathcal{M}_{u_f,T_f} (v) &= \mathcal{M}_{0,T_g} (v) \bigg( 1 +  \frac{\varepsilon v^2}{2 T_g \rho_g} \int_{-\infty}^{\infty} \left[\frac{u^2}{T_g}-1 \right] h(x,u) \mathrm{d}u + \frac{\varepsilon v }{T_g \rho_g}  \int_{-\infty}^{\infty} u h(x,u) \mathrm{d}u \\& \qquad \qquad \qquad \qquad\qquad \qquad\qquad \qquad
 -  \frac{\varepsilon}{2\rho_g}  \int_{-\infty}^{\infty} \left[\frac{u^2}{T_g}-1 \right] h(x,u) \mathrm{d}u\bigg)+ o(\varepsilon)
 \\ &  = 
  \mathcal{M}_{0,T_g} (v) \bigg( 1 +  \frac{\varepsilon }{2 \rho_g}  \left[\frac{v^2}{T_g}-1 \right] \int_{-\infty}^{\infty} \left[\frac{u^2}{T_g}-1 \right] h(x,u) \mathrm{d}u+ \frac{\varepsilon v }{T_g \rho_g}  \int_{-\infty}^{\infty} u h(x,u) \mathrm{d}u 
  \bigg)+ o(\varepsilon), 
\end{split} 
\end{equation} 
and so the whole nonlinear term is 
$$\rho_f  \mathcal{M}_{u_f,T_f} (v) = \rho_g \mathcal{M}_{0,T_g} (v) \bigg[ 1+
 \frac{\varepsilon}{\rho_g} \left\{ \int_{-\infty}^{\infty}  \left(1+ \frac{1}{2} \left[ \frac{v^2}{T_g}-1 \right]\left[\frac{u^2}{T_g}-1 \right] + \frac{uv}{T_g}  \right) h(x,u)
 \mathrm{d}u   \right\}
\bigg].$$
Using then that $-v\partial_x g + \mathcal{L}g=0$, we end up with the stated operator $\mathfrak{L}$.
\end{proof}

Next we stress that our linearised operator is a non self-adjoint operator in the weighted space $L^2(g^{-1})$. Nevertheless, we proceed by decomposing the operator $\mathfrak{L} = \mathcal{T} + \mathcal{C}$ into a symmetric and an antisymmetric part. This is motivated by techniques in hypocoercivity, see for example \cite{Villani09,Herau06, DMS15} and more recently a technique via Schur complements, \cite{BFLS20}.
\begin{lemma} \label{lem:splitting sym antisym}
The linearised equation \eqref{eq:linearised BGK} has the form
\[ \partial_t f + \mathcal{T}f = \mathcal{C}f \] where $\mathcal{C}$ is symmetric in $L^2(g^{-1})$ and $\mathcal{T}$ is antisymmetric. The explicit formulas are given by
\begin{equation}\label{eq:symmetric part}
\begin{split}
\mathcal{C}&f = \frac{\alpha}{2 \kappa} \int_{-\infty}^{\infty} \left(\mathcal{M}_{T_g}(v) + \frac{\mathcal{M}_{T_g}(u)}{g(x,u)} g(x,v) \right) \left( 1 + \frac{uv}{T_g} + \frac{1}{2} \left[ \frac{u^2}{T_g} -1 \right]\left[\frac{v^2}{T_g} -1 \right] \right) f(x,u) \mathrm{d}u\\
&  + \frac{1-\alpha}{2 \kappa} \int_{-\infty}^{\infty} \left( \mathcal{M}_{\tau}(v) + \frac{\mathcal{M}_{\tau}(u)}{g(x,u)} g(x,v) \right) f(x,u) \mathrm{d} u - \frac{1}{2 \kappa} \left( 1 + \rho_g \frac{\alpha \mathcal{M}_{T_g} + (1-\alpha) \mathcal{M}_\tau}{g(x,v)} \right) f(x,v), 
\end{split}
\end{equation}
while 
\begin{equation}\label{eq:antisymmetric part}
\begin{split}
\mathcal{T}f &= -v \partial_x f + \frac{\alpha}{2\kappa} \int_{-\infty}^{\infty}  \left( \mathcal{M}_{T_g}(v) - \frac{\mathcal{M}_{T_g}(u)g(x,v)}{g(x,u)} \right) \left( 1+ \frac{uv}{T_g} + \frac{1}{2} \left[ \frac{u^2}{T_g}-1\right]\left[ \frac{v^2}{T_g} -1 \right] \right)f(x,u) \mathrm{d}u \\
& \quad + \frac{1-\alpha}{2\kappa} \int_{-\infty}^{\infty}  \left( \mathcal{M}_{\tau(x)}(v) - \frac{\mathcal{M}_{\tau(x)} (u) g(x,v)}{g(x,u)} \right) f(x,u) \mathrm{d}u \\
& \quad - \frac{1}{2 \kappa} f(x,v) \left( 1 - \frac{\rho_g(x)}{g(x,v)} \left( \alpha \mathcal{M}_{T_g(x)}(v) + (1-\alpha) \mathcal{M}_{\tau(x)}(v) \right)  \right).
\end{split}
\end{equation}
\end{lemma}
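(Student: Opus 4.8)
The plan is to recognise $\mathcal{C}$ and $\mathcal{T}$ as the symmetric and antisymmetric components of $\mathfrak{L}$ with respect to the weighted inner product $\langle f_1,f_2\rangle_{g^{-1}}:=\int_{\mathbb{T}}\int_{\mathbb{R}} f_1 f_2\, g^{-1}\,\mathrm{d}v\,\mathrm{d}x$. Concretely I would compute the formal adjoint $\mathfrak{L}^*$ in $L^2(g^{-1})$, working on a convenient dense domain of functions smooth in $x$ and with fast decay in $v$ on which all the operators and integrations by parts below are legitimate, then set $\mathcal{C}=\tfrac12(\mathfrak{L}+\mathfrak{L}^*)$ and $\mathcal{T}=\tfrac12(\mathfrak{L}-\mathfrak{L}^*)$, and finally match term by term with \eqref{eq:symmetric part}--\eqref{eq:antisymmetric part}. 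It is natural to treat the three pieces of $\mathfrak{L}$ separately: the two gain (integral) operators, the loss term $-\tfrac1\kappa f$, and the transport term $-v\partial_x$.

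For the $\alpha$ gain term $Bf:=\tfrac{\alpha}{\kappa}\mathcal{M}_{T_g}(v)\int_{\mathbb{R}} k(u,v)\,f(x,u)\,\mathrm{d}u$, where $k(u,v)=1+\tfrac{uv}{T_g}+\tfrac12\big(\tfrac{u^2}{T_g}-1\big)\big(\tfrac{v^2}{T_g}-1\big)$ is symmetric in $(u,v)$, a Fubini exchange gives $B^*f(x,v)=\tfrac{\alpha}{\kappa}\int_{\mathbb{R}} k(u,v)\tfrac{\mathcal{M}_{T_g}(u)}{g(x,u)}\,g(x,v)\,f(x,u)\,\mathrm{d}u$, so that $\tfrac12(B+B^*)$ and $\tfrac12(B-B^*)$ reproduce precisely the first lines of \eqref{eq:symmetric part} and \eqref{eq:antisymmetric part} respectively; the $(1-\alpha)$ gain term is the same computation with $k\equiv1$. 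The loss term $-\tfrac1\kappa f$ is a multiplication operator, hence self-adjoint, and therefore contributes only to $\mathcal{C}$.

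The one genuinely delicate piece is the transport term, which --- unlike in the equilibrium case --- fails to be antisymmetric in $L^2(g^{-1})$ because the NESS weight $g$ depends on $x$. Integrating by parts in $x\in\mathbb{T}$ (no boundary terms) yields
\[
\langle -v\partial_x f_1,f_2\rangle_{g^{-1}}=\langle f_1,v\partial_x f_2\rangle_{g^{-1}}+\int_{\mathbb{T}}\int_{\mathbb{R}} v\, f_1 f_2\,\partial_x(g^{-1})\,\mathrm{d}v\,\mathrm{d}x,
\]
and here I would feed in the stationary identity $v\partial_x g=\mathcal{L}g=\tfrac1\kappa\big(\rho_g(\alpha\mathcal{M}_{T_g}+(1-\alpha)\mathcal{M}_\tau)-g\big)$ satisfied by $g$. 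Since it is $v\partial_x g$ --- not $\partial_x g$ --- that the stationary equation provides, the term $v\partial_x(g^{-1})=-g^{-2}\,v\partial_x g$ becomes a pure multiplication operator with no $1/v$ singularity, and one reads off $(-v\partial_x)^*=v\partial_x-\tfrac1\kappa\big(\tfrac{\rho_g(\alpha\mathcal{M}_{T_g}+(1-\alpha)\mathcal{M}_\tau)}{g}-1\big)$. Its antisymmetric part is $-v\partial_x$ together with half of that multiplication factor, which is exactly the last term of \eqref{eq:antisymmetric part}, while its symmetric part, added to the $-\tfrac1\kappa f$ coming from the loss term, collapses to $-\tfrac1{2\kappa}\big(1+\tfrac{\rho_g(\alpha\mathcal{M}_{T_g}+(1-\alpha)\mathcal{M}_\tau)}{g}\big)f$, the last term of \eqref{eq:symmetric part}. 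Summing the three contributions gives exactly $\mathcal{C}$ and $\mathcal{T}$, which are symmetric resp. antisymmetric by construction; equivalently, one can verify directly that the reduced kernels $k(u,v)\big(\tfrac{\mathcal{M}_{T_g}(v)}{g(x,v)}\pm\tfrac{\mathcal{M}_{T_g}(u)}{g(x,u)}\big)$ are symmetric resp. antisymmetric in $(u,v)$, the multiplication parts being trivially symmetric and the transport-plus-multiplication block antisymmetric precisely thanks to the stationary identity just used. The main obstacle is thus localised in the transport term, and the single idea needed is to invoke the PDE that $g$ solves in order to trade $\partial_x g$ for multiplication operators; everything else is bookkeeping.
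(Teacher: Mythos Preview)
Your proof is correct and follows essentially the same route as the paper: compute the formal adjoint $\mathfrak{L}^*$ in $L^2(g^{-1})$, handle the transport term by integrating by parts and invoking the stationary equation $v\partial_x g=\tfrac1\kappa\big(\rho_g(\alpha\mathcal{M}_{T_g}+(1-\alpha)\mathcal{M}_\tau)-g\big)$ to turn $v\partial_x(g^{-1})$ into a multiplication operator, and then set $\mathcal{C}=\tfrac12(\mathfrak{L}+\mathfrak{L}^*)$, $\mathcal{T}=\tfrac12(\mathfrak{L}-\mathfrak{L}^*)$. Your treatment is in fact a bit more explicit than the paper's on the gain terms (the Fubini step) and on how the loss term combines with the symmetric part of the transport to produce the final multiplication factor in~\eqref{eq:symmetric part}.
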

\begin{proof}
We stress here that we denote by $\mathfrak{L}$ the whole operator, together with the transport part so that 
\begin{align*}
&\mathfrak{L}  h =   
-v \partial_x h + \frac{1}{\kappa}
\Bigg( \alpha \mathcal{M}_{T_g(x)} \int_{-\infty}^{\infty} \left(1 + \frac{uv}{T_g} + \frac{1}{2} \left[ \frac{u^2}{T_g} -1 \right]\left[ \frac{v^2}{T_g} -1 \right] \right) h(x, u) \mathrm{d} u
\\ &  \qquad\qquad\qquad\qquad\qquad\qquad\qquad\qquad\qquad
+ (1-\alpha) \mathcal{M}_{\tau(x)}\int_{-\infty}^{\infty} h(x,u) \mathrm{d}u - h \Bigg).
\end{align*}
Calculating the adjoint of this operator $\mathfrak{L}^*$ in $L^2(g^{-1})$ we find that it is
\begin{equation}\label{eq:adjoint_linearised}
\begin{split}
\mathfrak{L}^*h &= v\partial_x h \\  &+ \frac{1}{\kappa} \left[ \alpha  g(x,v)  \int_{-\infty}^{\infty} \frac{\mathcal{M}_{T_g}(u)}{g(x,u)}\left( 1 + \frac{uv}{T_g} + \frac{1}{2} \left[ \frac{u^2}{T_g} -1 \right] \left[ \frac{v^2}{T_g} -1 \right] \right) h(x,u) \mathrm{d}u \right.\\
& \quad \left.+ (1-\alpha) \int_{-\infty}^{\infty} \frac{\mathcal{M}_{\tau(x)}(u)}{g(x,u)} h(x,u) \mathrm{d}u g(x,v)\right]\\
& \quad - \frac{\rho_g(x)}{g(x,v)}\left( \frac{\alpha}{\kappa} \mathcal{M}_{T_g(x)}(v) + \frac{(1-\alpha)}{\kappa} \mathcal{M}_{\tau(x)}(v) \right) h(x,v).
\end{split}
\end{equation}
Indeed the adjoint of the transport part gives, using that $g$ is NESS:
\begin{align*}
(-v\partial_x)^*h(x,v) &= v\partial_xh(x,v) -v\frac{\partial_x g(x,v)}{g(x,v)}h(x,v) \\ 
&=v\partial_xh(x,v) - g(x,v)^{-1}\rho_g(x)(\alpha \mathcal{M}_{T_g(x)}(v)  + (1-\alpha) \mathcal{M}_{\tau(x)}(v))h(x,v) -h(x,v),
\end{align*}
which accounts for the first and the last term in \eqref{eq:adjoint_linearised}.
Then we use this to get symmetric and antisymmetric parts since $\mathcal{C} = \frac{\mathfrak{L}+\mathfrak{L}^*}{2}$ and $\mathcal{T} =\frac{\mathfrak{L}-\mathfrak{L}^*}{2} $.
\end{proof}
Typically in $L^2$-hypocoercivity, see \cite{DMS15}, the strategy is to prove that the collisional symmetric operator in $L^2(g^{-1})$ satisfies a  \emph{microscopic coercivity} property on the orthogonal complement of its null space and that the transport antisymmetric operator satisfies a \emph{macroscopic coercivity} property exactly on the null space of the collisional operator. Then after introducing a modified entropy functional, equivalent to $L^2$ norm, under additional general assumptions on the decomposing operators, one can control its dissipation and conclude exponential relaxation of the semigroup to the steady state. 

This general framework in $L^2$ to estimate relaxation rates for a general class of kinetic equations does not apply in our case as it is, because our steady state does not have an explicit formula. In \cite{DMS15} it is required that the global equilibrium lies in the intersection of the null spaces of the two decomposing operators, i.e. the transport and the collision operators. This is not the case in our out-of-equilibrium setting. 

To our knowledge there are two articles so far dealing with such circumstances, \cite{EmFrCl17} where the authors treat the problem perturbatively and \cite{CalvRaoulSchm15} where the transport and collision operators  are redefined. 

Our approach is mainly inspired by the latter: in particular we split the operator differently rather than in \cite{DMS15}, into a symmetric and to a skew symmetric part as is shown in Lemma \ref{lem:splitting sym antisym}. Then we show a microscopic coercivity inequality for the symmetric part and a macroscopic coercivity for the antisymmetric part. 

Before we move on with proving such properties, let us stress that now one can check with explicit computations that $$\mathcal{C} g = \mathcal{T} g=0,$$ where $\mathcal{C}, \mathcal{T}$ are computed explicitly in Lemma \ref{lem:splitting sym antisym}. 
 So with this decomposition the steady state belongs in the intersection of the kernels of the decomposed part of the operator.

We denote by $\Pi$ the orthogonal projection to $\operatorname{Ker}(\mathcal{C})$ which is given by 
$$\Pi f = \rho_f \frac{g}{\rho_g}, \text{ where } \rho_f (x) = \int_{-\infty}^{\infty} f(x,v) \mathrm{d}v. $$
The space that we work in is 
$$\mathcal{H} = \Big\{ f \in L^2 \left( \frac{\mathrm{d}x \mathrm{d}v}{g} \right): \iint_{\mathbb{T}\times \mathbb{R}} f(x,v)\mathrm{d}x \mathrm{d}v=0  \Big\} $$
induced by the scalar product
\begin{equation} \label{eq:scalar prod}
 \langle f_1, f_2 \rangle= \langle f_1, f_2 \rangle_{L_{x,v}^2(g^{-1})} = \iint f_1(x,u)f_2(x,u)/g(x,u) \mathrm{d}u \mathrm{d}x.
 \end{equation} 


\subsection{Microscopic Coercivity} \label{subsec: micro coerc}
 This subsection is devoted to the coercivity property of the symmetric operator on the orthogonal of the kernel of $\mathcal{C}$.
\begin{prop}[Microscopic coercivity] \label{prop:micro coercivity}
With the above notation, we have that the operator $\mathcal{C}$ satisfies 
$$ -\langle \mathcal{C}h ,h \rangle \geq \lambda_m \|(I-\Pi) h \|^2$$
for some positive constant $\lambda_m$. 
\end{prop}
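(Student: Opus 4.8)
The plan is to compute $-\langle \mathcal{C}h, h\rangle$ directly from the explicit formula \eqref{eq:symmetric part} for $\mathcal{C}$ and to recognize it, up to the factor $\tfrac{1}{2\kappa}$, as a Dirichlet-type form that is manifestly nonnegative and vanishes exactly on $\operatorname{Ker}(\mathcal{C})$. Writing $h = \Pi h + (I-\Pi)h$ and using $\mathcal{C}g = 0$ (hence $\mathcal{C}(\Pi h) = 0$ since $\Pi h$ is a multiple of $g$ in $v$ at each $x$), we have $\langle \mathcal{C}h, h\rangle = \langle \mathcal{C}(I-\Pi)h, (I-\Pi)h\rangle$, so it suffices to bound $-\langle \mathcal{C}\phi,\phi\rangle \geq \lambda_m\|\phi\|^2$ for $\phi = (I-\Pi)h$, i.e. for $\phi$ with $\rho_\phi(x) = 0$ for all $x$.

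First I would split $\mathcal{C} = \alpha\, \mathcal{C}_{T_g} + (1-\alpha)\,\mathcal{C}_\tau$ according to the two Maxwellian contributions (absorbing the $-\tfrac{1}{2\kappa}f$ multiplier term appropriately into each piece so that each $\mathcal{C}_\bullet$ is itself symmetric with the right kernel). For the linear-thermostat piece $\mathcal{C}_\tau$: this is essentially the standard BGK relaxation operator, and a computation analogous to the classical DMS microscopic coercivity shows $-\langle \mathcal{C}_\tau\phi,\phi\rangle = \tfrac{1}{2\kappa}\|\phi\|^2$ on $\{\rho_\phi = 0\}$, because $\mathcal{C}_\tau\phi = -\tfrac{1}{2\kappa}(1 + \rho_g\mathcal{M}_\tau/g)\phi + (\text{rank-one in }v)$ and the rank-one part pairs to zero when $\rho_\phi = 0$; more carefully one writes it as a symmetrized quadratic form $-\langle \mathcal{C}_\tau\phi,\phi\rangle = \tfrac{1}{4\kappa}\iint \mathcal{M}_\tau(v)\mathcal{M}_\tau(u)\,g(x,v)^{-1}g(x,u)^{-1}\,(\phi(x,u)g(x,v) - \phi(x,v)g(x,u))^2/(\cdots)\,\mathrm{d}u\,\mathrm{d}v\,\mathrm{d}x$ type expression, nonnegative, vanishing only when $\phi/g$ is independent of $v$, which combined with $\rho_\phi = 0$ forces $\phi = 0$; quantitatively one extracts a constant depending on the lower bound of $\rho_g$, the bounds on $\tau$, $T_g$, and $\kappa$ from Lemmas \ref{lem: bounds_density}, \ref{lem: bounds_press} and Proposition \ref{prop. bounds on 3 and 4 moments}.

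The main obstacle is the nonlinear piece $\alpha\,\mathcal{C}_{T_g}$, whose integral kernel contains the full Maxwellian perturbation factor $1 + uv/T_g + \tfrac12(u^2/T_g - 1)(v^2/T_g - 1)$; this is an indefinite operator and in principle can produce growing modes in the span of $\{g, vg, v^2 g\}$, exactly the difficulty flagged in the introduction. Here the strategy is perturbative in $\alpha$: on $\{\rho_\phi = 0\}$ the ``1'' in the kernel drops out, so only the momentum mode $uv/T_g$ and the energy mode $(u^2/T_g-1)(v^2/T_g-1)$ survive, and these couple $\phi$ to its first and second normalized moments; using the bounds on $d_3, d_4$ (skewness and kurtosis) from Proposition \ref{prop. bounds on 3 and 4 moments} one bounds $|\langle \mathcal{C}_{T_g}\phi, \phi\rangle| \leq C(\underline{T},\bar{T},\bar{\tau},\kappa)\|\phi\|^2$ with a constant that is $\mathcal{O}(1)$ in $\alpha$. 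Then
\[
-\langle \mathcal{C}h, h\rangle \;\geq\; \Big( \tfrac{(1-\alpha)}{2\kappa} c_0 - \alpha\, C\Big)\|(I-\Pi)h\|^2,
\]
and choosing $\alpha$ small enough (as already required by Theorems \ref{thm:existence} and \ref{thm:stability}) makes the bracket a positive constant $\lambda_m$, which completes the proof. I would carry out the $\mathcal{C}_\tau$ estimate first to fix the good constant $c_0$, then the $\mathcal{C}_{T_g}$ estimate to get $C$, then combine.
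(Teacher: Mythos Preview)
Your proposal is correct and follows the same overall strategy as the paper: split $\langle \mathcal{C}h,h\rangle$ into the $(1-\alpha)$ thermostat piece (which yields the coercive term) and the $\alpha$ nonlinear piece (which is bounded in size), then combine perturbatively in small $\alpha$. Your preliminary reduction to $\phi=(I-\Pi)h$ via $\mathcal{C}\Pi h=0$ is a clean simplification the paper does not make explicit; your computation of the thermostat contribution coincides with the paper's calculation of $\mathcal{J}$, which yields exactly $-\tfrac{1}{2}\int(1+\rho_g\mathcal{M}_\tau/g)(h^\perp)^2/g\le -\tfrac12\|(I-\Pi)h\|^2$ (the Dirichlet-form formulation you mention is unnecessary).

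The one place where the paper's execution differs meaningfully from your sketch is the nonlinear estimate. You claim $|\langle \mathcal{C}_{T_g}\phi,\phi\rangle|\le C\|\phi\|^2$ directly via Cauchy--Schwarz and the bounds on $d_3,d_4$; the paper instead builds an $L^2_v(g^{-1})$-orthonormal frame $p_1g,p_2g,p_3g$ by Gram--Schmidt on $\{g,vg,v^2g\}$ (this is where $d_3,d_4$ enter, through the normalization $Z=2+d_4-d_3^2$), expands $h=ap_1g+\beta p_2g+\gamma p_3g+\tilde h$, and computes the quadratic form explicitly. The crucial technical point is that the cross terms involving $\langle \tilde h,\,v^k\mathcal{M}_{T_g}\rangle$ are not bounded by $\|\tilde h\|$ alone but by $\big(\int\tfrac{\tilde h^2}{g^2}\rho_g\mathcal{M}_{T_g}\big)^{1/2}$; these are then absorbed, via Young's inequality, into the negative multiplication term $-\tfrac12\int\rho_g\tfrac{\mathcal{M}_{T_g}}{g}\tfrac{h^2}{g}$. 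This absorption avoids needing any pointwise lower bound on $g$ of the form $g\gtrsim\mathcal{M}_{T_g}$, which a naive Cauchy--Schwarz on your side would implicitly require. If you carry out your estimate with the same absorption trick you will succeed; the paper's orthogonal-basis bookkeeping additionally yields the sharper conclusion $\mathcal{I}\le\varphi(d_3,d_4)\|h^\perp\|^2$ with $\varphi(0,0)=0$, though for the stated proposition any $\mathcal{O}(1)$ bound uniform in $\alpha$ suffices.
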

\begin{proof} 
In this proof we will write  $$\langle f_1, f_2 \rangle  = \langle f_1, f_2 \rangle_{L^2(g^{-1})} = \int f_1(u)f_2(u)/g(x,u) \mathrm{d}u$$ which we note is a function of $x$.

We start by computing $\langle \mathcal{C} h, h \rangle_{L^2(g^{-1})}$. In order to do this let us rewrite
\begin{align*}
 \mathcal{C} h 
 &= \frac{\alpha}{2\kappa}  \mathcal{M}_{T_g}
  \left(\langle h, g \rangle + \frac{v}{\sqrt{T_g}} \left\langle h, \frac{u}{\sqrt{T_g}} g \right\rangle + \frac{1}{\sqrt{2}} \left[ \frac{v^2}{T_g} -1 \right] \left\langle h , \frac{1}{\sqrt{2}}\left[ \frac{u^2}{T_g} -1 \right] g \right\rangle \right)\\
& \quad + 
\frac{\alpha}{2\kappa} g(x,v) \left( \langle h, \mathcal{M}_{T_g} \rangle + \frac{v}{\sqrt{T_g}} \left\langle h , \frac{u}{\sqrt{T_g}}\mathcal{M}_{T_g} \right\rangle + \frac{1}{\sqrt{2}} \left[ \frac{v^2}{T_g} -1 \right] 
\left\langle h, \frac{1}{\sqrt{2}} \left[ \frac{u^2}{T_g} -1 \right] \mathcal{M}_{T_g} \right\rangle \right)\\
& \quad + 
\frac{1-\alpha}{2 \kappa} \left( g(x,v)  \langle h, \mathcal{M}_\tau \rangle + \mathcal{M}_\tau \langle h, g \rangle \right) \\
& \quad - \frac{1}{2 \kappa}\left( 1 + \rho_g \frac{\alpha \mathcal{M}_{T_g} + (1-\alpha) \mathcal{M}_\tau}{g(x,v)} \right) h(x,v).
\end{align*}
Using this we write
\begin{align*}
\langle \mathcal{C} h, h \rangle &= \frac{\alpha}{\kappa} \Bigg( \left\langle h, \mathcal{M}_{T_g} \right\rangle \left\langle h, g \right\rangle + \left\langle h, \frac{u}{\sqrt{T_g}} \mathcal{M}_{T_g} \right\rangle \left\langle h, \frac{u}{\sqrt{T_g}} g \right\rangle \\ &
\qquad\qquad\qquad+ \left\langle h, \frac{1}{\sqrt{2}} \left[\frac{u^2}{T_g} -1\right] \mathcal{M}_{T_g} \right\rangle \left\langle h, \frac{1}{\sqrt{2}} \left[ \frac{u^2}{T_g} -1 \right] g \right\rangle 
\Bigg) + \frac{1-\alpha}{\kappa} \langle h, \mathcal{M}_\tau\rangle \langle h, g \rangle \\
& \qquad\qquad\qquad\qquad\qquad  - \frac{1}{2 \kappa} \int_{-\infty}^{\infty} \left( 1 + \rho_g \frac{\alpha \mathcal{M}_{T_g} + (1-\alpha) \mathcal{M}_\tau}{g(x,v)} \right) \frac{h(x,v)^2}{g(x,v)} \mathrm{d}v.
\end{align*}
We split this into
\[
 \langle  \mathcal{C}  h , h \rangle = \frac{1}{\kappa}\left[ \alpha  \mathcal{I} + (1-\alpha) \mathcal{J} \right]
  \] 
 where
\begin{align*} 
\mathcal{I} &= \left\langle h, \mathcal{M}_{T_g} \right\rangle \left\langle h, g \right\rangle + \left\langle h, \frac{u}{\sqrt{T_g}} \mathcal{M}_{T_g} \right\rangle \left\langle h, \frac{u}{\sqrt{T_g}} g \right\rangle + \left\langle h, \frac{1}{\sqrt{2}} \left[\frac{u^2}{T_g} -1\right] \mathcal{M}_{T_g} \right\rangle \left\langle h, \frac{1}{\sqrt{2}} \left[ \frac{u^2}{T_g} -1 \right] g \right\rangle \\ 
& -
 \frac{1}{2} \int_{-\infty}^{\infty} \left( 1 + \rho_g(x) \frac{\mathcal{M}_{T_g}}{g} \right) \frac{h(x,u)^2}{g(x,u)}  \mathrm{d}u
\end{align*} while 
\[  \mathcal{J} = \langle h, \mathcal{M}_{\tau} \rangle \langle h, g \rangle - 
\frac{1}{2} \int_{-\infty}^{\infty} \left(1+ \rho_g(x) \frac{\mathcal{M}_{\tau(x)} }{g} \right) \frac{h(x,u)^2}{g(x,u)} \mathrm{d}u
\]
and we are going to bound them separately. 

Motivated by this, we consider as a function of $v$ (for a fixed $x \in \mathbb{T}$) the functions:
\[p_1(v)g:=  \frac{g}{\sqrt{\rho_g}},\ p_2(v)g :=\frac{vg}{\sqrt{\rho_g T_g}},\ p_3(v)g := \frac{\left( \frac{v^2}{T_g} - \frac{d_3(x) v}{\sqrt{T_g}} -1 \right) g}{\sqrt{\rho_g(2+d_4 -d_3^2)}}.
\] 
We observe that these are orthogonal functions in $L_v^2(g^{-1})$.  

Then we decompose $h$ as the sum of $h = \hat{h} + \tilde{h}$ where $\tilde{h}$ is orthogonal in $L_v^2(g^{-1})$ to the space spanned by $p_1(v)g, p_2(v)g, p_3(v)g$, while $\hat{h}$ belongs in the space that these quantities span. 
We collect the above observations in the following lemma. 
\begin{lemma}\label{lem: orthonormal}
For fixed $x \in \mathbb{T}$, we consider the mappings $v \mapsto p_i(v) g(x,v) $ for $i=1,2,3$  where
\[p_1(v)g:=  \frac{g}{\sqrt{\rho_g}},\ p_2(v)g :=\frac{vg}{\sqrt{\rho_g T_g}},\ p_3(v)g := \frac{\left( \frac{v^2}{T_g} - \frac{d_3(x) v}{\sqrt{T_g}} -1 \right) g}{\sqrt{\rho_g(2+d_4 -d_3^2)}}.\]  These are orthogonal functions in $L^2_v(g^{-1})$ of norm $1$.
\end{lemma}
\begin{proof}
This is just a computation.
\begin{align*}
\int \left( \frac{v^2}{T_g} - \frac{d_3 v}{\sqrt{T_g}} - 1 \right)^2 g\mathrm{d}v &= \frac{\rho_g T_g^2 (3+d_4)}{T_g} - \frac{2\rho_g d_3^2T_g^{3/2}}{T_g^{3/2}} + \frac{\rho_g d_3^2 T_g}{T_g} -2 \frac{\rho_g T_g}{T_g} + \rho_g  \\ & = \rho_g \left( 2+d_4 -d_3^2\right).
\end{align*}
\end{proof}

We continue by writing $h =\big[  a p_1 g + \beta p_2 g + \gamma p_3 g \big] + \tilde{h}$ for some constants $a,\beta,\gamma$ and where  $\tilde{h}$ is orthogonal to the space spanned by $p_1g, p_2g, p_3 g$ in $L^2_v(g^{-1})$. In the following lemma we collect the explicit values of each term appearing  in $\mathcal{I}$, $ \mathcal{J}$. One can check these by explicit computations. 
\begin{lemma}
Decomposing the solution of \eqref{eq:linearised BGK} as $h = a p_1 g + \beta p_2 g + \gamma p_3 g + \tilde{h}$, for some constants $a,\beta,\gamma$. Then 
\[ \langle h, g \rangle = a\sqrt{\rho_g},
 \quad \left\langle h, \frac{u}{\sqrt{T_g}} g \right\rangle  =\beta \sqrt{\rho_g} ,
 \quad \left\langle h, \frac{1}{\sqrt{2}}\left[ \frac{u^2}{T_g} -1 \right] g \right\rangle  = \gamma \sqrt{ \frac{\rho_g (2+d_4 -d_3^2)}{2}} + \beta d_3 \sqrt{\rho_g} \]
and
\[ \left\langle h, \mathcal{M}_{T_g} \right\rangle  =\frac{a}{\sqrt{\rho_g}} + \left\langle \tilde{h}, \mathcal{M}_{T_g} \right\rangle , 
\quad \left\langle h, \frac{u}{\sqrt{T_g}}\right\rangle  = \frac{\beta}{\sqrt{\rho_g}} - \frac{\gamma d_3}{\sqrt{\rho_g(2+d_4 - d_3^2)}} + \left\langle \tilde{h}, \frac{u}{\sqrt{T_g}} \mathcal{M}_{T_g} \right\rangle    \]
\end{lemma}

From now on we will denote by $Z=Z(x) := 2+d_4(x)-d_3(x)^2$ and let us note that we are able to bound this from below and above uniformly in $x$, thanks to the Proposition \ref{prop. bounds on 3 and 4 moments} with constants depending on $\alpha, \underline{\tau}, \bar{\tau}$. 
Inserting these values into $\mathcal{I}$, for its first three terms we have 
\begin{align*}
&\left\langle  h, \mathcal{M}_{T_g} \right\rangle \left\langle  h, g \right\rangle +
 \left\langle  h, \frac{u}{\sqrt{T_g}} \mathcal{M}_{T_g} \right\rangle \left\langle  h, \frac{u}{\sqrt{T_g}} g \right\rangle 
 +\left\langle  h, \frac{1}{\sqrt{2}} \left[\frac{u^2}{T_g} -1\right] \mathcal{M}_{T_g} \right\rangle \left\langle  h, \frac{1}{\sqrt{2}} \left[ \frac{u^2}{T_g} -1 \right] g \right\rangle \\
&= a^2 + \beta^2 +\gamma^2 + \\ & \qquad\quad+
 \sqrt{\rho_g} \left[ a \left\langle \tilde{h}, \mathcal{M}_{T_g} \right\rangle  +
  \beta \left\langle \tilde{h}, \frac{v}{\sqrt{T_g}} \mathcal{M}_{T_g} \right\rangle + 
  \frac{\gamma \sqrt{Z} + \beta d_3}{\sqrt{2}} \left\langle \tilde{h}, \frac{1}{\sqrt{2}} \left[ \frac{v^2}{T_g} - 1 \right] \mathcal{M}_{T_g} \right\rangle \right].
\end{align*}

Moving to the fourth term in $\mathcal{I}$,we first note that we have
\begin{equation}\label{eq: micros coerc est1}
\int \frac{h^2}{g} = a^2 + \beta^2 +\gamma^2 + \int \frac{\tilde{h}^2}{g}
\end{equation}
and therefore 
\begin{align*}
\int \frac{h^2}{g^2} \rho_g \mathcal{M}_{T_g} &=\int \frac{\hat{h}^2}{g^2} \rho_g \mathcal{M}_{T_g} + \int \frac{\tilde{h}^2}{g^2} \rho_g \mathcal{M}_{T_g} = 
  a^2 + \beta^2 +  \frac{\gamma^2(2+d_3^2)}{Z} - \frac{2\beta \gamma}{\sqrt{Z}} d_3 \\
&  + \sqrt{\rho_g} 
\left[
 2a \langle \tilde{h}, \mathcal{M}_{T_g} \rangle
  +2 \left( \beta -\frac{ \gamma d_3}{\sqrt{Z}} \right) \left\langle \tilde{h}, \frac{v}{\sqrt{T_g}} \mathcal{M}_{T_g} \right\rangle + 
  2 \gamma \sqrt{\frac{2}{Z}} 
  \left\langle \tilde{h}, \frac{1}{\sqrt{2}} \left[ \frac{v^2}{T_g} -1 \right] \mathcal{M}_{T_g} 
  \right\rangle \right]\\
& +  \int_{-\infty}^{\infty}  \frac{\tilde{h}(x,v)^2}{g(x,v)^2}\rho_g(x) \mathcal{M}_{T_g}(v) \mathrm{d}v.
\end{align*}

Putting this together we get
\begin{equation} \label{eq:est1 on I}
\begin{split}
\mathcal{I}& \leq 
a^2+ \beta^2+\gamma^2 + a \sqrt{\rho_g} \left\langle \tilde{h}, \mathcal{M}_{T_g} \right\rangle  + 2\beta \sqrt{\rho_g}\left\langle \tilde{h}, \frac{v}{\sqrt{T_g}} \mathcal{M}_{T_g} \right\rangle \\ 
&+   \left\langle \tilde{h},  \left[ \frac{v^2}{T_g} -1 \right] \mathcal{M}_{T_g} 
  \right\rangle \sqrt{\frac{\rho_g}{2}}\left( \gamma \sqrt{\frac{Z}{2}} + \frac{\beta d_3}{\sqrt{2}}  \right) 
   - \frac{a^2}{2}-\frac{\beta^2}{2} - \frac{\gamma^2(2+d_3^2)}{2Z} \\ &- \frac{\beta\gamma}{\sqrt{Z}} d_3 - \sqrt{\rho_g}\left( a  \left\langle \tilde{h}, \mathcal{M}_{T_g} \right\rangle +
    \left( \beta - \frac{\gamma d_3 }{\sqrt{Z}}  \right)  \left\langle \tilde{h}, \frac{v}{\sqrt{T_g}} \mathcal{M}_{T_g} \right\rangle     
    + \gamma \sqrt{\frac{2}{Z}} \left\langle \tilde{h}, \frac{1}{\sqrt{2}} \left[ \frac{v^2}{T_g} -1 \right] \mathcal{M}_{T_g} 
  \right\rangle
      \right) \\& 
     - \frac{1}{2} \int_{-\infty}^{\infty}  \frac{\tilde{h}(x,v)^2}{g(x,v)^2}\rho_g(x) \mathcal{M}_{T_g}(v) \mathrm{d}v
       -\frac{a^2}{2} - \frac{\beta^2}{2} - \frac{\gamma^2}{2} - \frac{1}{2}\int_{-\infty}^{\infty} 
       \frac{\tilde{h}(x,v)^2}{g(x,v)} \mathrm{d}v
\\ &\leq
 \frac{\gamma^2}{2} \left( 1- \frac{2+d_3^2}{2+d_4 - d_3^2} \right) + 
 \frac{\beta \gamma}{\sqrt{Z}} |d_3|\\
& \quad + \sqrt{\rho_g} \left[  \frac{\gamma d_3}{\sqrt{Z}} \left\langle \tilde{h}, \frac{v}{\sqrt{T_g}} \mathcal{M}_{T_g} \right\rangle + \left(\frac{\gamma}{\sqrt{2}} \left( \sqrt{\frac{Z}{2}} - \sqrt{ \frac{2}{Z}} \right) + \frac{\beta d_3}{\sqrt{2}}\right)  \left\langle \tilde{h},\left[ \frac{v^2}{T_g}-1 \right] \mathcal{M}_{T_g} \right\rangle \right] \\
& \quad - \frac{1}{2} \int \frac{\tilde{h}^2}{g} \left( 1+ \rho_g \frac{\mathcal{M}_{T_g}}{g} \right).
\end{split}
\end{equation}
Now regarding the terms involving $\tilde{h}$ we first estimate by Cauchy-Schwartz and then use that $\int_{-\infty}^{\infty} v^2\mathcal{M}_{T_g}(v) \mathrm{d}v =T_g$: 
\begin{equation} \label{eq:est2 on I}
\begin{split}
\sqrt{\rho_g(x)} \left\langle \tilde{h}, \frac{v}{\sqrt{T_g}} \mathcal{M}_{T_g} \right\rangle &= 
\sqrt{\rho_g(x)} \int_{-\infty}^{\infty} \tilde{h}(x,v) \frac{v}{\sqrt{T_g}} \frac{\mathcal{M}_{T_g}(v)}{g(x,v)} \mathrm{d}v \\
&\leq \left( \int_{-\infty}^{\infty} \frac{\tilde{h}(x,v)^2}{g(x,v)^2} \rho_g(x) \mathcal{M}_{T_g}(v) \mathrm{d}v \right)^{1/2} 
\left( \int_{-\infty}^{\infty} \frac{v^2}{T_g} \mathcal{M}_{T_g} (v) \mathrm{d}v \right)^{1/2} 
\\ & =\left( \int_{-\infty}^{\infty} \frac{\tilde{h}^2}{g^2} \rho_g(x) \mathcal{M}_{T_g} (v) \mathrm{d}v \right)^{1/2}, 
\end{split}
\end{equation}
and similarly
\begin{equation}\label{eq:est3 on I}
\begin{split}
\sqrt{\rho_g(x)} \left\langle \tilde{h}, \frac{1}{\sqrt{2}}\left[ \frac{v^2}{T_g} -1 \right] \mathcal{M}_{T_g}  \right\rangle  \leq \left(\int_{-\infty}^{\infty} \frac{\tilde{h}^2}{g^2} \rho_g(x) \mathcal{M}_{T_g}(v) \mathrm{d}v \right)^{1/2}. 
\end{split}
\end{equation}
The first line in the right-hand side of  \eqref{eq:est1 on I} becomes $ \frac{\gamma^2}{2} \frac{d_4-2d_3^2 }{2+d_4 -d_3^2}  + \frac{\beta\gamma d_3}{\sqrt{Z}}$. We also gather the other terms together combining them with the estimates in \eqref{eq:est2 on I}, \eqref{eq:est3 on I} and apply Young's inequality to get 
\begin{align*}
\mathcal{I} & \leq \gamma^2 \frac{d_4 - 2d_3^2}{2+d_4 - d_3^2} + \frac{\beta\gamma d_3}{\sqrt{Z}} + \frac{1}{4} \left( \frac{d_3 |\gamma|}{\sqrt{Z}} + \left| \gamma \left( \sqrt{\frac{Z}{2}} - \sqrt{\frac{2}{Z}} \right) + \frac{\beta d_3}{\sqrt{2}} \right| \right)^2 \\
& \leq \varphi(d_3,d_4) (\beta^2 +\gamma^2) \leq \varphi(d_3,d_4) \int_{-\infty}^{\infty} \frac{h^\perp(x,v)^2}{g(x,v)}\mathrm{d}v, 
\end{align*}
where $h^\perp:=(I-\Pi)h = h - \left\langle h, \frac{g}{\sqrt{\rho_g}} \right\rangle \frac{g}{\sqrt{\rho_g}} = 
 h - \left\langle h, p_1 g \right\rangle  \frac{g}{\sqrt{\rho_g}}$.
  And $\varphi(d_3, d_4)$ is a continuous function of $d_3,d_4$ which goes to $0$ as $d_3, d_4 \rightarrow 0$. The last inequality above follows from the equation \eqref{eq: micros coerc est1} as we can write $h = h^\perp +a p_1g$ which implies that $ a^2 + \beta^2 +\gamma^2 + \int \frac{\tilde{h}^2}{g} =  \int \frac{h^2}{g} =  \int \frac{ (h^{\perp})^2 }{g} + a^2$, cf. Lemma \ref{lem: orthonormal}. Then $\beta^2 +\gamma^2 =   \int \frac{ (h^{\perp})^2 }{g}  - \int \frac{h^2}{g}$, so that the claim follows.

Now we move onto $\mathcal{J}$ which is simpler as here we only want to split $h$ into its parts that are parallel and perpendicular to $g$. We have
\begin{align*}
\mathcal{J} &= a^2 + a \sqrt{\rho_g} \langle h^\perp, \mathcal{M}_\tau \rangle - a^2 - a \frac{1}{\sqrt{\rho_g}} \int \left(1+ \rho_g \frac{\mathcal{M}_\tau}{g} \right) h^\perp - \frac{1}{2} \int \left(1+ \rho_g \frac{\mathcal{M}_\tau}{g} \right) \frac{(h^\perp)^2}{g}\\
&= - \frac{1}{2} \int \left( 1+ \rho_g \frac{\mathcal{M}_\tau}{g} \right) \frac{(h^\perp)^2}{g}.
\end{align*}

Therefore
\[ \langle Ch, h \rangle \leq \frac{1}{\kappa} \left( \alpha \varphi(d_3,d_4) - \frac{1-\alpha}{2} \right) \int \frac{(h^\perp)^2}{g}. \]

Now $d_3, d_4$ change with $\alpha$, however from Prop. \ref{prop. bounds on 3 and 4 moments} we see that we do not have singularities in terms of $\alpha$ in the upper and lower bounds of $d_3,d_4$. Thus we can bound the moments uniformly over all $\alpha \in [0,1]$. This implies that there is a finite constant $M$ so that $\varphi(d_3,d_4)\leq M(\kappa, \bar{T}, \underline{T}, \bar{\tau}, \underline{\tau})<\infty$ for $M$ independent of $\alpha$. Then 
\[ \langle \mathcal{C} h, h \rangle \leq \frac{1}{\kappa} \left( \alpha M(\kappa, \bar{T}, \underline{T}, \bar{\tau}, \underline{\tau})- \frac{1-\alpha}{2} \right) \int \frac{(h^\perp)^2}{g},\]
which yields microscopic coercivity as long as $\alpha> \frac{1}{2  M(\kappa, \bar{T}, \underline{T}, \bar{\tau}, \underline{\tau})-1}$. Since we can make this bound $ M(\kappa, \bar{T}, \underline{T}, \bar{\tau}, \underline{\tau})$ large, we ensure that there is a positive constant $\lambda_m$ so the statement of the proposition holds true. 
\end{proof} 

\subsection{Macroscopic coercivity} \label{subsec: macro coerc}
We remind that if $\Pi$ is the projection onto the kernel of $\mathcal{C}$ then $\Pi h = \frac{\rho_h}{\rho_g}g$. We proceed by providing a macroscopic coercivity inequality for the antisymmetric part of the linearised operator on the null space of $\mathcal{C}$. 
\begin{prop} \label{prop:macro coercivity}
With the above notation and when the condition \eqref{eq:constr_LB on P posit} on $\alpha$ holds, for the antisymmetric operator there exists a positive constant $\lambda_M$ so that 
$$ \| \mathcal{T}\Pi h\| \geq  \lambda_M \| \Pi h\| $$
for all $h \in \mathcal{H} \cap \mathcal{D}(\mathcal{T}\Pi)$.
\end{prop}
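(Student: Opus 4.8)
The plan is to compute $\mathcal{T}\Pi h$ in closed form, which turns it into a pure transport term, and then to reduce the desired inequality to a weighted Poincar\'e inequality on $\mathbb{T}$. First I would write $\Pi h = \sigma g$ with $\sigma(x):=\rho_h(x)/\rho_g(x)$ and plug $f=\sigma g$ into the formula \eqref{eq:antisymmetric part} for $\mathcal{T}$. Using the moment identities recorded in the text, namely $\int v g\,\mathrm{d}v=0$, $\int v^2 g\,\mathrm{d}v=P_g=\rho_g T_g$, $\int\mathcal{M}_{T_g}\,\mathrm{d}v=\int\mathcal{M}_{\tau}\,\mathrm{d}v=1$, $\int v\mathcal{M}_{T_g}\,\mathrm{d}v=0$ and $\int v^2\mathcal{M}_{T_g}\,\mathrm{d}v=T_g$, every moment of the factor $1+\tfrac{uv}{T_g}+\tfrac12[\tfrac{u^2}{T_g}-1][\tfrac{v^2}{T_g}-1]$ against $g(x,\cdot)$ collapses to $\rho_g$ and against $\mathcal{M}_{T_g}$ collapses to $1$. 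A direct bookkeeping then shows that the three collisional terms of $\mathcal{T}(\sigma g)$ add up to $\tfrac{\sigma}{\kappa}\big(\rho_g(\alpha\mathcal{M}_{T_g}+(1-\alpha)\mathcal{M}_\tau)-g\big)$, which by the stationary equation \eqref{eq:LBGK} for $g$ equals $\sigma\,v\partial_x g$. Combined with the transport term $-v\partial_x(\sigma g)=-vg\,\partial_x\sigma-\sigma v\partial_x g$, this gives the clean identity
\[ \mathcal{T}\Pi h = -\,v g\,\partial_x\sigma, \qquad \sigma=\rho_h/\rho_g. \]

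Next I would evaluate both norms. Since $P_g$ is independent of $x$ (shown just before Lemma \ref{lem: bounds_press}) and strictly positive under \eqref{eq:constr_LB on P posit},
\[ \|\mathcal{T}\Pi h\|^2 = \iint v^2 g\,(\partial_x\sigma)^2\,\mathrm{d}v\,\mathrm{d}x = P_g\int_{\mathbb{T}}(\partial_x\sigma)^2\,\mathrm{d}x, \qquad \|\Pi h\|^2 = \int_{\mathbb{T}}\sigma^2\rho_g\,\mathrm{d}x. \]
So the proposition is equivalent to a Poincar\'e-type estimate $\int_{\mathbb{T}}\sigma^2\rho_g\,\mathrm{d}x \lesssim \int_{\mathbb{T}}(\partial_x\sigma)^2\,\mathrm{d}x$ with constant independent of $\sigma$. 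To obtain it I would use that $h\in\mathcal{H}$ forces $\int_{\mathbb{T}}\sigma\rho_g\,\mathrm{d}x=\int_{\mathbb{T}}\rho_h\,\mathrm{d}x=\iint_{\mathbb{T}\times\mathbb{R}}h\,\mathrm{d}v\,\mathrm{d}x=0$, i.e. $\sigma$ has zero $\rho_g$-weighted mean. Because Lemma \ref{lem: bounds_density} gives uniform bounds $0<\underline\rho\le\rho_g(x)\le\bar\rho$ on the torus, this weighted mean-zero condition together with the ordinary Wirtinger inequality $\|\sigma-\bar\sigma\|_{L^2(\mathbb{T})}\le C_W\|\partial_x\sigma\|_{L^2(\mathbb{T})}$ (with $\bar\sigma$ the ordinary mean of $\sigma$) forces $|\bar\sigma|\lesssim\|\sigma-\bar\sigma\|_{L^2(\mathbb{T})}$, hence $\|\sigma\|_{L^2(\mathbb{T})}\le C_P\|\partial_x\sigma\|_{L^2(\mathbb{T})}$ for a constant $C_P$ depending only on $C_W,\underline\rho,\bar\rho,|\mathbb{T}|$. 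Then $\int_{\mathbb{T}}\sigma^2\rho_g\,\mathrm{d}x\le\bar\rho\,C_P^2\int_{\mathbb{T}}(\partial_x\sigma)^2\,\mathrm{d}x=(\bar\rho C_P^2/P_g)\|\mathcal{T}\Pi h\|^2$, which is the claim with $\lambda_M=\sqrt{P_g/(\bar\rho C_P^2)}>0$.

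The step requiring genuine care is the first one: checking that the collisional part of $\mathcal{T}$, applied to an element $\sigma g$ of $\operatorname{Ker}\mathcal{C}$, reconstructs exactly $\sigma\,Lg$, so that it cancels the $\sigma v\partial_x g$ produced by differentiating $\sigma g$ and leaves only $-vg\,\partial_x\sigma$. This is where one uses both the collapse of the weighted moments and the stationary equation; once it is done, the remainder is the standard macroscopic coercivity mechanism, the sole new feature being that the relevant Poincar\'e inequality must be taken with respect to the non-explicit weight $\rho_g$, which is absorbed by the uniform density bounds already proved. (I expect no difficulty from domain issues: $h\in\mathcal{D}(\mathcal{T}\Pi)$ is exactly what makes $\sigma\in H^1(\mathbb{T})$ and all the integrations by parts above legitimate.)
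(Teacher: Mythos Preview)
Your proposal is correct and follows essentially the same route as the paper: compute $\mathcal{T}\Pi h$ explicitly as a pure transport term $vg\,\partial_x(\rho_h/\rho_g)$ (your cancellation argument via $\mathcal{T}g=0$ and the stationary equation is exactly what the paper invokes in one line), then reduce to a Poincar\'e inequality on $\mathbb{T}$ using the uniform density bounds. Two small differences worth noting: you exploit the constancy of $P_g$ to pull it out of $\|\mathcal{T}\Pi h\|^2$ directly, whereas the paper lower-bounds $T_g$ and keeps the $\rho_g$ weight; and your handling of the mean-zero condition is actually cleaner than the paper's, which asserts $\int_{\mathbb{T}}(\rho_h/\rho_g)\,\mathrm{d}x=0$ as ``equivalent'' to $\int_{\mathbb{T}}\rho_h\,\mathrm{d}x=0$ --- your route through the Wirtinger inequality and the bound on $|\bar\sigma|$ via the weighted mean is the correct way to close that step.
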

\begin{proof} 
We first remind, cf Lemma \ref{lem:splitting sym antisym}, that the antisymmetric part $\mathcal{T}$ is given by
\begin{align*}
\mathcal{T} h &= v \partial_x h + \frac{\alpha}{2\kappa} \int \left( \mathcal{M}_{T_g}(v) - \frac{\mathcal{M}_{T_g}(u)g(x,v)}{g(x,u)} \right) \left( 1+ \frac{uv}{T_g} + \frac{1}{2} \left( \frac{u^2}{T_g}-1\right)\left( \frac{v^2}{T_g} -1 \right) \right)h(x,u) \mathrm{d}u \\
& \quad + \frac{1-\alpha}{2\kappa} \int \left( \mathcal{M}_\tau(v) - \frac{\mathcal{M}_\tau (u) g(x,v)}{g(x,u)} \right) h(x,u) \mathrm{d}u \\
& \quad - \frac{1}{2 \kappa} \left( h - \frac{\rho_g}{g} \left( \alpha \mathcal{M}_{T_g}(v) + (1-\alpha) \mathcal{M}_\tau(v) \right) h \right).
\end{align*}
From this we can explicitly compute, since $\mathcal{T}g=0$ that
\[ \mathcal{T} \Pi h = v \partial_x\left(\frac{\rho_h}{\rho_g} \right) g. \] 
Consequently with respect to the inner product defined in \eqref{eq:scalar prod},
\begin{equation} 
\begin{split}
\| \mathcal{T} \Pi h\|^2 &= \iint_{\mathbb{T} \times\mathbb{R}} v^2 g(x,v) \left(\partial_x \left(\frac{\rho_h}{\rho_g} \right) \right)^2 \mathrm{d}v \mathrm{d}x 
= \int_{\mathbb{T}}  \rho_g(x) T_g(x) \left(\partial_x \left( \frac{\rho_h}{\rho_g} \right) \right)^2 \mathrm{d}x\\ 
& \geq \frac{1}{4} \left[ \left[ \alpha \frac{\underline{T}^{1/2}}{2\pi} + (1-\alpha)\frac{\underline{\tau}^{1/2}}{2\pi}  \right]  
- \frac{1}{\kappa}  \left[ \frac{\alpha}{\kappa \underline{T}^{1/4}} + \frac{1-\alpha}{\kappa \underline{\tau}^{1/4}} \right]^2 \right]   \left[ \frac{\alpha}{\kappa \underline{T}^{1/4}} + \frac{1-\alpha}{\kappa \underline{\tau}^{1/4}} \right]^{-2} \times \\ & 
\qquad\qquad\qquad\qquad\qquad\qquad\qquad\qquad\qquad\qquad
\times  \int_{\mathbb{T}}  \rho_g(x)\left(\partial_x \left( \frac{\rho_h}{\rho_g} \right) \right)^2 \mathrm{d}x
 \end{split} 
 \end{equation}
 where we applied the lower bound on the temperature from Lemma \ref{lem:bounds on temperature}. 
Therefore we conclude the macroscopic coercivity provided that $\rho_g(x) \mathrm{d}x$ satisfies a Poincar\'{e}  inequality: Indeed then 
 $$ \| \mathcal{T} \Pi h\|^2 \geq \lambda_M
   \int_{\mathbb{T}}  \rho_g(x) \left( \frac{\rho_h}{\rho_g} \right)^2(x)  \mathrm{d}x  =  \lambda_M
   \iint_{\mathbb{T}\times\mathbb{R}}  g(x,v)^{-1}|\Pi h|^2 \mathrm{d}x\mathrm{d}v =  \lambda_M \|\Pi h \|^2$$
 where $\lambda_M = \lambda_M(\alpha, \kappa, \bar{T}, \underline{T}, \bar{\tau}, \underline{\tau} )$. 
 Now a  Poincar\'{e} inequality is true since our space is the torus and $\rho_g$ is upper and lower bounded uniformly in $x\in \mathbb{T}$: For $h \in \mathcal{H}$, i.e. $\int_{\mathbb{T}}\left(  \frac{\rho_h}{\rho_g} \right) (x) \mathrm{d}x=0$ (this is equivalent by the upper and lower bound on the density with $\int_{\mathbb{T}}\rho_h \mathrm{d}x=0$) it holds 
 $$\int_{\mathbb{T}}\left(\partial_x \left(\frac{\rho_h}{\rho_g} \right) \right)^2  \mathrm{d}x \geq \int_{\mathbb{T}} \left(\frac{\rho_h}{\rho_g} \right)^2  \mathrm{d}x. $$
 Hence macroscopic coercivity holds with the explicit constant 
 \begin{align*} 
 \lambda_M = & \frac{1}{4} \left[ \left[ \alpha \frac{\underline{T}^{1/2}}{2\pi} + (1-\alpha)\frac{\underline{\tau}^{1/2}}{2\pi}  \right]  
- \frac{1}{\kappa}  \left[ \frac{\alpha}{\kappa \underline{T}^{1/4}} + \frac{1-\alpha}{\kappa \underline{\tau}^{1/4}} \right]^2 \right]   \left[ \frac{\alpha}{\kappa \underline{T}^{1/4}} + \frac{1-\alpha}{\kappa \underline{\tau}^{1/4}} \right]^{-2}\times \\ 
& \times \left[ 1 - \Big[ \alpha \underline{T}^{-1/2}  + (1-\alpha) \underline{\tau}^{-1/2} \Big] \right]. 
\end{align*} 
 
\end{proof} 

\subsection{Boundnedness of auxiliary operators} \label{subsec:boundedness auxiliar oper}
From above we can see that 
\begin{equation} \label{eq:Pi T Pi}
\Pi \mathcal{T} \Pi h = 0
\end{equation}
 since $\int ug(x,u) \mathrm{d}u =0$. Indeed $$\Pi \mathcal{T} \Pi h = \int u \partial_x\left(\frac{\rho_h}{\rho_g} \right)(x)g(x,u) \mathrm{d}u \frac{g(x,v)}{\rho_g(x)} = \partial_x\left(\frac{\rho_h}{\rho_g} \right)(x) u_g(x) \frac{g(x,v)}{\rho_g(x)}  =0.$$

 Following the approach in \cite{DMS15}, we define the operator 
\begin{equation} 
A:= (1+(\mathcal{T} \Pi)^* \mathcal{T} \Pi )^{-1}(\mathcal{T} \Pi)^*.
\end{equation}
We will use this operator in order to define the modified entropy for the hypocoercivity. 
The final ingredients we will need is the boundedness of certain operators. 
In this subsection we record some properties of these operators. First thing to notice is that $A=\Pi A$, this is easy to see for example from the relation
 \begin{equation} \label{eq:operat A}
A f  = -\Pi \mathcal{T} f + \Pi \mathcal{T}^2 \Pi A f,\text{ for all } f \in \mathcal{H}
\end{equation} 
 which follows directly from the definition of $A$. Then taking the inner product in \eqref{eq:operat A} with $Af$ we see that \begin{align*} 
  \| Af\|^2 = \langle f, \mathcal{T} \Pi Af  \rangle &- \| \mathcal{T} \Pi A f \|^2  \leq \|(1-\Pi)f \| \| \mathcal{T} \Pi Af \| - \| \mathcal{T} \Pi A f \|^2 \text{ or } \\
 &  \| Af\|^2 +\| \mathcal{T} \Pi A f \|^2   \lesssim \frac{\|(1-\Pi)f \|^2}{2\varepsilon} + \frac{\varepsilon}{2} \|\mathcal{T} \Pi Af \|^2
 \end{align*} 
for $\varepsilon >0$, where we used \eqref{eq:Pi T Pi}. This yields the boundedness of both $A$ and $\mathcal{T} A$ for all $f \in \mathcal{H}$.

In \cite{DMS15} they require a stronger hypothesis regarding the boundedness of auxilliary operators, namely that  
\[ \|A \mathcal{T} (I-\Pi)f\| + \|A \mathcal{C} f\| \lesssim  \|(I-\Pi)f\|. \] 
It is immediate on studying the proof in this paper that it is also sufficient to show that
\[ \|A\mathfrak{L}(I-\Pi)f\| \lesssim \|(I-\Pi)f\|. \]

\begin{lemma} \label{lem: boundedn auxiliary op}
With the above notation the operators 
$$A,\ \mathcal{T} A \text{ and } A \mathfrak{L} $$
are bounded and 
\[ \|A\mathfrak{L}(I-\Pi)f\| \leq C\|(I-\Pi)f\|  \] 
for some constant $C$ depending on $\alpha, \kappa, \bar{T}, \underline{T}, \bar{\tau}, \underline{\tau}$.
\end{lemma}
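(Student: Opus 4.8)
The plan is to reduce the whole statement to a one--dimensional elliptic estimate on the torus, exploiting that the linearised dynamics is ``macroscopically diffusive'' with diffusivity $T_g(x)$. Recall $\Pi h=\tfrac{\rho_h}{\rho_g}g$, and identify $\operatorname{Ran}\Pi$ isometrically with $L^2(\rho_g\,\mathrm{d}x)$ via $\phi\mapsto\phi g$. A direct computation, using $\mathcal{T}g=0$ so that the collisional part of $\mathcal{T}$ annihilates any function of the form $\phi(x)g(x,v)$, gives $\mathcal{T}\Pi(\phi g)=(\partial_x\phi)\,vg$ and, for $w\in\mathcal{H}$, $(\mathcal{T}\Pi)^{*}w=-\tfrac1{\rho_g}(\partial_x m_w)\,g$, where $m_w(x):=\int_{\mathbb{R}}v\,w(x,v)\,\mathrm{d}v$. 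Hence $1+(\mathcal{T}\Pi)^{*}\mathcal{T}\Pi$, restricted to $\operatorname{Ran}\Pi$, equals $\phi\mapsto\mathcal{E}\phi:=\phi-\tfrac1{\rho_g}\partial_x\!\big(\rho_g T_g\,\partial_x\phi\big)$. Here I would use the crucial fact, noted just before Lemma~\ref{lem: bounds_press}, that the pressure $P_g=\rho_g T_g$ is \emph{constant in $x$}: thus $\mathcal{E}\phi=\phi-T_g(x)\,\partial_x^2\phi$ is a genuine uniformly elliptic second--order operator with coefficients merely bounded above and below by Lemmas~\ref{lem: bounds_density} and \ref{lem:bounds on temperature}, and $\langle\mathcal{E}\phi,\phi\rangle_{L^2(\rho_g)}=\int\rho_g\phi^2+P_g\int(\partial_x\phi)^2$. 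By Lax--Milgram $\mathcal{E}^{-1}\colon L^2(\rho_g)\to H^1$ is bounded, and bootstrapping through $\partial_x^2\phi=T_g^{-1}(\phi-\mathcal{E}\phi)$ upgrades this to $\mathcal{E}^{-1}\colon L^2(\rho_g)\to H^2$, with norm depending only on $\underline T,\bar T,\underline\tau,\bar\tau,\kappa$.

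Boundedness of $A$ and $\mathcal{T}A$ is already contained in the discussion preceding the lemma via \eqref{eq:operat A}; the concrete form $Aw=\big(\mathcal{E}^{-1}(-\rho_g^{-1}\partial_x m_w)\big)g$, combined with the standard $H^{-1}\to H^1$ elliptic bound $\|\mathcal{E}^{-1}(-\rho_g^{-1}\partial_x m_w)\|_{H^1}\lesssim\|m_w\|_{L^2(\mathbb{T})}$ and the moment estimate $\|m_w\|_{L^2(\mathbb{T})}\le P_g^{1/2}\|w\|_{L^2(g^{-1})}$ (Cauchy--Schwarz and $\int v^2 g\,\mathrm{d}v=P_g$), re-proves $\|Aw\|\lesssim\|w\|$; and since $\mathcal{T}Aw=\big(\partial_x\mathcal{E}^{-1}(-\rho_g^{-1}\partial_x m_w)\big)vg$ has squared norm $P_g\,\|\partial_x\mathcal{E}^{-1}(\cdot)\|_{L^2}^2$, also $\|\mathcal{T}Aw\|\lesssim\|w\|$.

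The heart of the matter is $\|A\mathfrak{L}(I-\Pi)f\|\le C\|(I-\Pi)f\|$, which I would prove by duality, to sidestep the fact that $m_{\mathfrak{L}w}$ is only an $H^{-1}$ object. For $\psi\in\mathcal{H}$ one has $A^{*}\psi=\mathcal{T}\Pi\big(1+(\mathcal{T}\Pi)^{*}\mathcal{T}\Pi\big)^{-1}\psi=(\partial_x W_\psi)\,vg$, with $W_\psi:=\mathcal{E}^{-1}(\rho_\psi/\rho_g)\in H^2$ and $\|W_\psi\|_{H^2}\lesssim\|\psi\|_{L^2(g^{-1})}$ (Cauchy--Schwarz and $\int g\,\mathrm{d}v=\rho_g$). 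Then $\langle A\mathfrak{L}w,\psi\rangle=\langle\mathfrak{L}w,A^{*}\psi\rangle$ with $w:=(I-\Pi)f$, and using the explicit form of $\mathfrak{L}$ in \eqref{eq:linearised BGK} together with $\int_{\mathbb{R}}v\,\mathcal{M}_{T_g}(v)\,\mathrm{d}v=0$, $\int_{\mathbb{R}}v^2\,\mathcal{M}_{T_g}(v)\,\mathrm{d}v=T_g$ and the vanishing of the odd moments of $\mathcal{M}_{T_g}$, one computes $\int_{\mathbb{R}}v\,(\mathfrak{L}w)(x,v)\,\mathrm{d}v=-\partial_x P_w-\tfrac{1-\alpha}{\kappa}\,m_w$, where $P_w(x):=\int v^2 w\,\mathrm{d}v$. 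Pairing against $A^{*}\psi=(\partial_x W_\psi)vg$ and integrating the transport contribution by parts in $x$ onto $W_\psi$ — legitimate since $W_\psi\in H^2$ — gives
\[
\langle A\mathfrak{L}(I-\Pi)f,\psi\rangle=\int_{\mathbb{T}}(\partial_x^2 W_\psi)\,P_w\,\mathrm{d}x-\frac{1-\alpha}{\kappa}\int_{\mathbb{T}}(\partial_x W_\psi)\,m_w\,\mathrm{d}x.
\]
Finally Cauchy--Schwarz with $\int v^4 g\,\mathrm{d}v=\rho_g T_g^2(3+d_4)$, bounded uniformly in $x$ by Proposition~\ref{prop. bounds on 3 and 4 moments}, and with $\int v^2 g\,\mathrm{d}v=P_g$, yields $\|P_w\|_{L^2(\mathbb{T})}+\|m_w\|_{L^2(\mathbb{T})}\lesssim\|w\|_{L^2(g^{-1})}$, so that $|\langle A\mathfrak{L}(I-\Pi)f,\psi\rangle|\lesssim\|\psi\|\,\|(I-\Pi)f\|$ and the claim follows by taking the supremum over $\psi$.

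The only genuine obstacle is the operator--theoretic bookkeeping: pinning down $A$ and $A^{*}$ through $\mathcal{E}$ and justifying the integrations by parts. The constancy of $P_g$ is what makes this painless — it turns $\mathcal{E}$ into the honest operator $1-T_g\partial_x^2$, so $\mathcal{E}^{-1}$ gains two full derivatives in the plain $L^2$ scale and no regularity of $\rho_g$ or $T_g$ beyond their two--sided bounds is needed. All quantitative input is then carried by the density, pressure, temperature and fourth--moment estimates of Lemmas~\ref{lem: bounds_density}, \ref{lem:bounds on temperature} and Proposition~\ref{prop. bounds on 3 and 4 moments}; in particular the finiteness of $\sup_x\int v^4 g\,\mathrm{d}v$, hence the constant $C$, is where the smallness of $\alpha$ and largeness of $\underline\tau$ re-enter.
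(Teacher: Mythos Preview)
Your argument is correct and is essentially the same as the paper's: both reduce the bound on $A\mathfrak{L}$ to $H^2$ elliptic regularity for the macroscopic operator $\phi\mapsto\phi-T_g\,\partial_x^2\phi$ (the paper writes it as $T_g^{-1}m-\partial_x^2 m=T_g^{-1}n_g$), using the constancy of $P_g=\rho_g T_g$, and then close with the uniform fourth--moment bound from Proposition~\ref{prop. bounds on 3 and 4 moments}. The only cosmetic difference is that you pair $\mathfrak{L}w$ against $A^*\psi=(\partial_x W_\psi)vg$ and compute $\int v\,\mathfrak{L}w\,\mathrm{d}v=-\partial_x P_w-\tfrac{1-\alpha}{\kappa}m_w$, whereas the paper computes $\mathfrak{L}^*A^*f=v^2(\partial_x^2 m)g-\tfrac{1-\alpha}{\kappa}(\partial_x m)vg$ directly and bounds its norm; these are the two sides of the same duality and yield identical terms.
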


\begin{proof}
The operator $A \mathfrak{L}$ is bounded if and only if $\mathfrak{L}^* A^*$ is bounded. We show this is bounded by adapting elliptic regularity style result as in \cite{DMS15} but here with some extra terms. Let us write
\[ \tilde{f} := (I+ (\mathcal{T}\Pi)^*(\mathcal{T} \Pi))^{-1} f \] and define
$m(x) = \frac{\rho_{\tilde{f}}}{\rho_g}(x)$ then

\begin{equation} 
\begin{split}
&\mathfrak{L}^* A^* f = \mathfrak{L}^*\mathcal{T} \Pi \tilde{f}  = \mathfrak{L}^* (v (\partial_x m) g(x,v) )
=  v\partial_x ( v (\partial_x m) g(x,v) )\\ 
& + \frac{\partial_x m}{\kappa} \Bigg( \alpha g(x,v) \int u \mathcal{M}_{T_g}(u) \left( 1 + \frac{uv}{T_g} + \frac{1}{2} \left[ \frac{u^2}{T_g} -1 \right] \left[ \frac{v^2}{T_g} -1 \right] 
\right) \mathrm{d}u \\ & 
\qquad\qquad\qquad\qquad\qquad\qquad\qquad+ (1-\alpha) \int \frac{ \mathcal{M}_{\tau(x)}(u) }{g(x,u)} ug(x,u) \mathrm{d}u g(x,v)
\Bigg) \\ & - 
\frac{\rho_g(x)}{g(x,v)}\left( \frac{\alpha}{\kappa} \mathcal{M}_{T_g(x)}(v) + \frac{(1-\alpha)}{\kappa} \mathcal{M}_{\tau(x)}(v) \right) vg(x,v) ( \partial_xm)\\ 
 &=v^2 (\partial^2_x m(x)) g(x,v) - \frac{1-\alpha}{\kappa} (\partial_x m(x)) v g(x,v), 
 \end{split}  
\end{equation}
where in the last line we used that $g$ is a NESS and the cancellation of the odd Maxwellian moments. 
So taking the $L_{x,v}^2(g^{-1})$ norm we have 
\begin{equation} \label{eq:bound on LA}
 \| \mathfrak{L}^* A^* f \| \leq 
\left\| T_g \sqrt{\rho_g} [\partial_x^2 m] \int v^4 g \mathrm{d} v  \right\|_{L^2(\mathbb{T})} + \frac{(1-\alpha)}{\kappa}  \left\| \sqrt{\rho_g T_g} ( \partial_x m(x)) \right\|_{L^2(\mathbb{T})} \leq C \|m\|_{H^2(\mathbb{T})}, 
\end{equation}
where $C = C(\alpha, \kappa, \bar{T}, \underline{T}, \bar{\tau}, \underline{\tau})$ and for the bounds we used the results from Lemma \ref{lem: bounds_press} and Proposition \ref{prop. bounds on 3 and 4 moments}. 
We furthermore know that 
\[ (I + (\mathcal{T} \Pi)^*(\mathcal{T} \Pi)) \tilde{f} = \tilde{f} + T_g (\partial^2_x m) g = f . \] Integrating this and dividing by $\rho_g$ gives
\[ \frac{1}{T_g(x)}m(x) - (\partial_x^2 m(x)) = \frac{1}{T_g(x)} n_g(x), \] where $n_g(x) = \frac{\rho_f}{\rho_g}(x)$. Then since $T_g$ is bounded above and below, see lemma \ref{lem:bounds on temperature}, $L^2 \to H^2$   elliptic regularity gives us that 
\begin{align*} 
&\| m\|_{H^2(\mathbb{T})} \leq C \left\| \frac{n_g}{T_g}\right\|_{L^2(\mathbb{T})}  \\ & \leq  C \left( \frac{1}{4} \left[ \left[ \alpha \frac{\underline{T}^{1/2}}{2\pi} + (1-\alpha)\frac{\underline{\tau}^{1/2}}{2\pi}  \right]  
- \frac{1}{\kappa}  \left[ \frac{\alpha}{\kappa \underline{T}^{1/4}} + \frac{1-\alpha}{\kappa \underline{\tau}^{1/4}} \right]^2 \right]   \left[ \frac{\alpha}{\kappa \underline{T}^{1/4}} + \frac{1-\alpha}{\kappa \underline{\tau}^{1/4}} \right]^{-2} \right)^{-1}  
\|n_g\|_{L^2(\mathbb{T})}.
\end{align*}

 This in turn yields from \eqref{eq:bound on LA} that
\[ \| \mathfrak{L}^* A^* f \| \leq C(\alpha, \kappa, \bar{T}, \underline{T}, \bar{\tau}, \underline{\tau}) \|n_g\|_{L^2(\mathbb{T})} 
\leq C(\alpha, \kappa, \bar{T}, \underline{T}, \bar{\tau}, \underline{\tau}) \|f\|_{L^2(g^{-1})} \] 
which concludes the lemma.
\end{proof}
\subsection{Proof of Theorem \ref{thm:stability}} 
As we have verified the main assumptions required for the abstract $L^2$ stability theorem - microscopic coercivity of $\mathcal{C}$, macroscopic coercivity of $\mathcal{T}$ and boundedness of certain operators - the stability of our linearised equation follows directly by considering, for $A$ defined in \eqref{eq:operat A}, the modified entropy 
\begin{equation} 
H(f) : = \frac{1}{2} \|f\|^2+ \varepsilon \langle Af,f \rangle
\end{equation}
where $\varepsilon>0$. One then computes the entropy dissipation 
\begin{align*}
D(f) &= -\langle \mathcal{C} f,f\rangle +
 \varepsilon \langle A \mathcal{T}\Pi f,f\rangle  + 
 \varepsilon \langle A \mathcal{C}(I-\Pi) f,f\rangle - 
 \varepsilon \langle \mathcal{T} Af,f\rangle  \\
&= -\langle \mathcal{C} f,f\rangle +
 \varepsilon \langle A \mathcal{T}\Pi f,f\rangle  + 
 \varepsilon \langle A \mathfrak{L}(I-\Pi) f,f\rangle - 
 \varepsilon \langle \mathcal{T} Af,f\rangle 
\end{align*}
The coercivity of the Dirichlet form follows thanks to Propositions \ref{prop:micro coercivity} and \ref{prop:macro coercivity} via which we treat the first two terms: 
$$  -\langle \mathcal{C} f,f\rangle +
 \varepsilon \langle A \mathcal{T}\Pi f,f\rangle \geq \lambda_m \vee \left[ \frac{\varepsilon \lambda_M}{1+\lambda_M}\right] \|f\|^2
 $$ for $\lambda_M = \lambda_M(\alpha, \kappa, \bar{T}, \underline{T}, \bar{\tau}, \underline{\tau} )$ and the same for $\lambda_m$. 
 Then up to sufficiently small $\varepsilon \in (0,1)$, (i) the new perturbed entropy is equivalent to the $L^2$ norm, i.e. $\frac{1}{2} (1-\varepsilon) \|f\|^2 \leq H(f) \leq\frac{1}{2} (1+\varepsilon) \|f\|^2$ and (ii) using also the boundedness of $A\mathfrak{L}$, cf Lemma \ref{lem: boundedn auxiliary op}, we conclude the existence of a positive constant $\lambda = \lambda(\alpha, \kappa, \bar{T}, \underline{T}, \bar{\tau}, \underline{\tau} )$ for which 
 $D(f) \geq \lambda \|f\|^2$. From this, due to the equivalence, we get from Gr\"{o}nwall an exponential relaxation of the semigroup $e^{t\mathfrak{L}}$ in $H(f)$ and therefore in $L^2$ up to prefactors.


\smallsection{Acknowledgements} The authors would like to thank Cl\'{e}ment Mouhot and Laure Saint-Raymond for interesting discussions. JE acknowledges partial support from the Leverhulme Trust, Grant ECF-2021-134 and AM support by a Huawei fellowship at IHES.

\appendix
\section{Linear stability when the temperature variation is small} \label{appendix}
\begin{thm}
Suppose that there exists a $\tau_*$ and an $\epsilon >0$ such that $|\tau(x) - \tau| < \epsilon$ and there exists a steady state $g$ such that $|T_g(x) - \tau_*| \leq C\epsilon$ for some $C$ that can depend on $\tau_*$ but not on $\epsilon$. Then the steady state $g$ is linearly stable provided $\epsilon$ is sufficiently small in terms of $\tau_*$.
\end{thm}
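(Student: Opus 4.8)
The plan is to stay in the space $L^2(g^{-1})$ with exactly the decomposition $\mathfrak{L}=\mathcal{C}+\mathcal{T}$ and the auxiliary operator $A$ of Section~\ref{sec:Linear stability}, and to notice that of the three ingredients of the $L^2$ hypocoercivity argument only \emph{microscopic coercivity} is in doubt in the present regime. Indeed the proofs of macroscopic coercivity (Proposition~\ref{prop:macro coercivity}) and of the boundedness of $A$, $\mathcal{T}A$, $A\mathfrak{L}$ (Lemma~\ref{lem: boundedn auxiliary op}) use nothing about $g$ beyond the facts that $\rho_g,T_g$ are bounded above and below and that the $x$-constant pressure $P_g=\rho_g T_g$ is positive. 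Under the standing hypothesis these hold automatically: $P_g$ is a finite positive constant (finite since $g$ has finite second moment, positive since $g\ge0$ is a nontrivial steady state, and $x$-independent as noted before Lemma~\ref{lem: bounds_press}), and then $\rho_g(x)=P_g/T_g(x)\in[\,P_g/(\tau_*+C\epsilon),\,P_g/(\tau_*-C\epsilon)\,]$ is bounded above and below once $\epsilon<\tau_*/(2C)$. So the whole argument of Section~\ref{sec:Linear stability} closes as soon as microscopic coercivity is recovered for the present $g$.

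Microscopic coercivity is precisely where the proof of Proposition~\ref{prop:micro coercivity} used largeness of $\alpha$: it produces $\langle\mathcal{C}h,h\rangle\le\frac1\kappa\big(\alpha\,\varphi(d_3,d_4)-\tfrac{1-\alpha}{2}\big)\int\frac{(h^\perp)^2}{g}$, where $\varphi$ is continuous with $\varphi(0,0)=0$ and $d_3,d_4$ are the normalised third and fourth moments of $g$ from Definition~\ref{def: 3rd and 4th moment}. Hence it suffices to show $|d_3(x)|+|d_4(x)|\le C_1\epsilon$ uniformly in $x$: then $\varphi(d_3,d_4)\le C_2\epsilon$ and $\langle\mathcal{C}h,h\rangle\le\frac1\kappa\big(\alpha C_2\epsilon-\tfrac{1-\alpha}{2}\big)\int\frac{(h^\perp)^2}{g}\le-\lambda_m\int\frac{(h^\perp)^2}{g}$ with $\lambda_m>0$, provided $\epsilon<\frac{1-\alpha}{2\alpha C_2}$ (and trivially if $\alpha=0$); note that this requires no smallness or largeness of $\alpha$, only of $\epsilon$.

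To obtain $d_3,d_4=\mathcal{O}(\epsilon)$ I would use the representation formula of Section~\ref{sec:Existence and Uniqueness} for the steady state of \eqref{eq:LBGK} (applicable to $g=g^{T_g}$, since a steady state of \eqref{eq:NL_BGK} solves \eqref{eq:LBGK} with $T=T_g$) together with the elementary identity $\int_0^1\frac{e^{-(1-y)/\kappa|v|}}{\kappa|v|\,(1-e^{-1/\kappa|v|})}\,\mathrm{d}y=1$ for every $v\neq0$. Writing $K(v,y)$ for that kernel and $\bar\rho:=P_g/\tau_*$, one has $\rho_g(x)=\bar\rho+\mathcal{O}(\epsilon)$ and, by hypothesis, $T_g(x),\tau(x)=\tau_*+\mathcal{O}(\epsilon)$, whence $\rho_g(x')\big(\alpha\mathcal{M}_{T_g(x')}(v)+(1-\alpha)\mathcal{M}_{\tau(x')}(v)\big)=\bar\rho\,\mathcal{M}_{\tau_*}(v)+\mathcal{O}(\epsilon)\,(1+v^2)\mathcal{M}_{2\tau_*}(v)$ uniformly in $x'$ (the error is a fixed Gaussian times a polynomial, as $\partial_T\mathcal{M}_T$ is of this form). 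Splitting $v>0$, $v<0$ and using that Maxwellians are even, $\int v^3 g\,\mathrm{d}v=\int_0^1\!\!\int_0^\infty v^3 K(v,y)\big[\rho_g(x+y)(\cdots)-\rho_g(x-y)(\cdots)\big](v)\,\mathrm{d}v\,\mathrm{d}y$, and the bracket is $\bar\rho\mathcal{M}_{\tau_*}-\bar\rho\mathcal{M}_{\tau_*}+\mathcal{O}(\epsilon)=\mathcal{O}(\epsilon)$, so (using $\int_0^1 v^3 K\,\mathrm{d}y=v^3$) $\int v^3 g\,\mathrm{d}v=\mathcal{O}(\epsilon)$ and $d_3=(\rho_g T_g^{3/2})^{-1}\int v^3 g\,\mathrm{d}v=\mathcal{O}(\epsilon)$; for the fourth moment the $x\pm y$ contributions add instead of cancelling, $\int v^4 g\,\mathrm{d}v=\int_0^1\!\!\int_0^\infty v^4 K(v,y)\,2\bar\rho\mathcal{M}_{\tau_*}(v)\,\mathrm{d}v\,\mathrm{d}y+\mathcal{O}(\epsilon)=2\bar\rho\int_0^\infty v^4\mathcal{M}_{\tau_*}\,\mathrm{d}v+\mathcal{O}(\epsilon)=3\bar\rho\tau_*^2+\mathcal{O}(\epsilon)$, while $\rho_g T_g^2=\bar\rho\tau_*^2+\mathcal{O}(\epsilon)$, so $d_4=\frac{3\bar\rho\tau_*^2+\mathcal{O}(\epsilon)}{\bar\rho\tau_*^2+\mathcal{O}(\epsilon)}-3=\mathcal{O}(\epsilon)$, all uniform in $x$ with constants depending only on $\tau_*,\kappa$ (and $C$).

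With microscopic coercivity restored, the proof of Theorem~\ref{thm:stability} applies verbatim — the modified entropy $H(f)=\tfrac12\|f\|^2+\varepsilon\langle Af,f\rangle$ is equivalent to $\|\cdot\|_{L^2(g^{-1})}^2$ for small $\varepsilon$ and satisfies $D(f)\ge\lambda\|f\|^2$ — giving the spectral gap of $\mathfrak{L}$ in $L^2(g^{-1})$, valid once $\epsilon$ is small in terms of $\tau_*$ (and of the fixed $\kappa,\alpha$). The step I expect to be the real work is the uniform $\mathcal{O}(\epsilon)$ control of $d_3,d_4$: one must verify that the $\mathcal{O}(\epsilon)$ perturbation of the coefficients $\rho_g,T_g,\tau$ inside the representation formula passes through the Gaussian $v$-integrals against $K(v,y)$ with constants that are integrable in $y$ and independent of $x$ — which is exactly the flavour of estimate carried out in Lemma~\ref{lem: bounds_density}, Lemma~\ref{lem: bounds_press} and Proposition~\ref{prop. bounds on 3 and 4 moments}, here specialised to track the dependence on $\epsilon$ rather than on $\alpha$.
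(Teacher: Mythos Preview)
Your approach is correct but genuinely different from the paper's. The paper does not stay in $L^2(g^{-1})$ at all: it works in $L^2(\mathcal{M}_{\tau_*}^{-1})$, introduces the reference operator $\mathfrak{L}_{\tau_*}$ obtained by replacing both $T_g$ and $\tau$ by the constant $\tau_*$, observes that $\mathfrak{L}_{\tau_*}$ has a spectral gap in a modified norm $\|\cdot\|_*$ equivalent to the Gaussian-weighted $L^2$ by the standard (equilibrium) hypocoercivity argument, and then bounds $\|\mathfrak{L}f-\mathfrak{L}_{\tau_*}f\|_*\le C(\tau_*)\epsilon^2\|f\|_*$ directly from $|\tau-\tau_*|,|T_g-\tau_*|\lesssim\epsilon$; the conclusion follows by a one-line perturbation and Gr\"onwall. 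The only analytic input is the elementary Gaussian estimate $\|(1+v^2)(\mathcal{M}_T-\mathcal{M}_{\tau_*})\|_{L^2_v(\mathcal{M}_{\tau_*}^{-1})}=\mathcal{O}(|T-\tau_*|)$; neither the representation formula nor the moments $d_3,d_4$ are touched. Your route, by contrast, reruns the full machinery of Section~\ref{sec:Linear stability} in $L^2(g^{-1})$, the key new step being $d_3,d_4=\mathcal{O}(\epsilon)$ via the representation formula. This is more work but buys two things: the spectral gap is obtained in the natural space $L^2(g^{-1})$ rather than in $L^2(\mathcal{M}_{\tau_*}^{-1})$, and you get the quantitative side-information that the skewness and kurtosis of $g$ are themselves $\mathcal{O}(\epsilon)$. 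Note your argument needs $\alpha<1$ so that the $(1-\alpha)\mathcal{J}$ term in Proposition~\ref{prop:micro coercivity} provides the coercivity; the paper's perturbation argument hides the $\alpha$-dependence inside the spectral gap $\lambda_*$ of $\mathfrak{L}_{\tau_*}$.
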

\begin{proof}
Let us define $$\mathfrak{L}_{\tau_*} f = - v \partial_x f + \frac{1}{\kappa} \left( \mathcal{M}_{\tau_*} \int_{-\infty}^\infty \left( 1+ \alpha \frac{uv}{\tau_*} + \frac{1}{2} \left( \frac{u^2}{\tau_*} -1 \right) \left( \frac{v^2}{\tau_*} -1 \right) \right) f(x,u) \mathrm{d} u - f \right).$$ Then we notice that using a standard $L^2$-hypocoercivity argument we can find a norm $\| \cdot \|_*$ which is equivalent to the $L^2(\mathcal{M}_{\tau_*}^{-1})$-norm for which $\mathfrak{L}_{\tau_*}$ has a spectral gap. We write
\[ \langle \mathfrak{L}_{\tau_*} f, f \rangle_* \leq - \lambda_* \|f\|_*^2 \] 
where $\lambda_*$ depends on $\tau_*, \kappa$ and the choice of norm. We also notice that 
\begin{align*} |\mathfrak{L} f - \mathfrak{L}_{\tau_*}f | & \leq \frac{1}{\kappa} \left( C(\mathcal{M}_{T_g} - \mathcal{M}_{\tau_*})\Big(\rho_f + \frac{v}{\tau_* - \epsilon} m_f \right. \\
& \quad \left.+ \frac{1+v^2}{(\tau_* - \epsilon)^2} P_f \Big) + C\mathcal{M}_{\tau_*} \left(1+ \frac{1}{\tau_* - \epsilon} \right)(1+v^2) \epsilon + (\mathcal{M}_\tau - \mathcal{M}_{\tau_*}) \rho_f\right). \end{align*}
Therefore,
\[ \| \mathfrak{L} f - \mathfrak{L}_{\tau_*}f\|_{L^2(\mathcal{M}_{\tau_*}^{-1})} \leq C(\tau_*) \left(\epsilon^2 + \|(1+v^2)(\mathcal{M}_{T_g} - \mathcal{M}_{\tau_*})\|_{L_v^2(\mathcal{M}_{\tau_*}^{-1})} +  \|\mathcal{M}_{\tau} - \mathcal{M}_{\tau_*}\|_{L_v^2(\mathcal{M}_{\tau_*}^{-1})}\right) \|f\|. \]
We can show that 
\begin{align*}
 & \int_v (1+v^2)[M_T(v)-M_{T(x)}(v)]^2M_T^{-1} dv= \int_v (1+v^2)\left[1- \frac{M_{T(x)}(v)}{M_T(v)}\right]^2M_T(v) \mathrm{d}v\\ &
 \sim \int_v (1+v^2) M_T(v) \left[ 1- \sqrt{\frac{T}{T(x)}}e^{-\frac{|v|^2}{2}(T(x)^{-1}-T^{-1})} \right]^2 \mathrm{d}v.
\end{align*}
If we write $T(x)= T+ \theta(x)$ with $\operatorname{sup}_x |\theta(x)| \le \varepsilon$, 
\begin{align*}
    \int_v (1+v^2) M_T(v) \left[ 1- \sqrt{\frac{T}{T(x)}}e^{-\frac{|v|^2}{2T}\left(\frac{1}{1-\theta(x)/T}-1  \right)} \right]^2 \mathrm{d}v \sim 
    &\int_v (1+v^2) M_T(v)\left[ 1- \sqrt{\frac{T}{T(x)}} e^{ \frac{|v|^2\theta(x)}{2} + \frac{|v|^2}{2T}} \right]^2 \mathrm{d}v  \\ & \lesssim \frac{\varepsilon^2}{4T^2} \int_v (1+v^2) M_T(v) e^{|v|^2\varepsilon}  \mathrm{d}v. 
\end{align*}
Applying the above for our temperatures $\tau(x), \tau_*, T_g,$ we have 
\[ \| \mathfrak{L} f - \mathfrak{L}_{\tau_*}f\|_{L^2(\mathcal{M}_{\tau_*}^{-1})} \leq C(\tau_*) \epsilon^2  \|f\|. \]
Therefore as our other norm is equivalent we have
\[ \| \mathfrak{L} f - \mathfrak{L}_{\tau_*}f\|_* \leq C'(\tau_*) \epsilon^2 \|f\|. \] Hence, 
\[ \frac{\mathrm{d}}{\mathrm{d}t} \|f\|^2_* \leq -(\lambda_* - C'(\tau_*) \epsilon^2) \|f\|_*^2. \] So indeed Gr\"{o}nwall implies that if $\epsilon$ is small enough we will have a positive spectral gap in the $*$-norm for the operator $\mathfrak{L}$.
\end{proof}

 \bibliographystyle{alpha}
\bibliography{bibliography}

\newcommand{\etalchar}[1]{$^{#1}$}
\begin{thebibliography}{CEHRB18}

\bibitem[AEMN10]{AEMN10}
L.~Arkeryd, R.~Esposito, R.~Marra, and A.~Nouri.
\newblock Stability for {R}ayleigh-{B}enard convective solutions of the
  {B}oltzmann equation.
\newblock {\em Arch. Ration. Mech. Anal.}, 198(1):125--187, 2010.

\bibitem[AEMN12]{AEMN12}
L.~Arkeryd, R.~Esposito, R.~Marra, and A.~Nouri.
\newblock Exponential stability of the solutions to the {B}oltzmann equation
  for the {B}enard problem.
\newblock {\em Kinet. Relat. Models}, 5(4):673--695, 2012.

\bibitem[AN00]{AN00}
Leif Arkeryd and Anne Nouri.
\newblock {$L^1$} solutions to the stationary {B}oltzmann equation in a slab.
\newblock {\em Ann. Fac. Sci. Toulouse Math. (6)}, 9(3):375--413, 2000.

\bibitem[Ark00]{A00}
L.~Arkeryd.
\newblock On the stationary {B}oltzmann equation in {${\bf R}^n$}.
\newblock {\em Internat. Math. Res. Notices}, pages 625--641, 2000.

\bibitem[BGK54]{BGK54}
P.~Bhatnagar, E.~Gross, and M.~Krook.
\newblock A model for collision processes in gases. i. small amplitude
  processes in charged and neutral one-component systems.
\newblock {\em Phys. Rev.}, 94:511--525, May 1954.

\bibitem[BHM17]{EmFrCl17}
Emeric Bouin, Franca Hoffmann, and Cl\'{e}ment Mouhot.
\newblock Exponential decay to equilibrium for a fiber lay-down process on a
  moving conveyor belt.
\newblock {\em SIAM Journal on Mathematical Analysis}, 49(4):3233--3251, 2017.

\bibitem[BLRB00]{BLR00}
F.~Bonetto, J.~L. Lebowitz, and L.~Rey-Bellet.
\newblock Fourier's law: a challenge to theorists.
\newblock In {\em Mathematical physics 2000}, pages 128--150. Imp. Coll. Press,
  London, 2000.

\bibitem[BM22]{BM20}
Simon Becker and Angeliki Menegaki.
\newblock The optimal spectral gap for regular and disordered harmonic networks
  of oscillators.
\newblock {\em Journal of Functional Analysis}, 282(2):109286, 2022.

\bibitem[BMY23]{BM23}
S.~Becker, A.~Menegaki, and J.~Yu.
\newblock Spectral analysis and phase transitions for long-range interactions
  in harmonic chains of oscillators.
\newblock Soon on arXiv, 2023.

\bibitem[BO05]{BO05}
C.~Bernardin and S.~Olla.
\newblock Fourier's law for a microscopic model of heat conduction.
\newblock {\em J. Stat. Phys.}, 121(3-4):271--289, 2005.

\bibitem[Bol03]{Boltzmann03}
L.~Boltzmann.
\newblock {\em Further Studies on the Thermal Equilibrium of Gas Molecules},
  volume~1, pages 262--349.
\newblock Originally published under the title "Weitere Studien {\"u}ber das
  W{\"o}rmegleichgewicht unter Gasmolek{\"u}len", in Sitzungsberichte Akad.
  Wiss., Vienna, part II, 66, 275--370 (1872); reprinted in Boltzmann's
  Wissenschaftliche Abhandlungen, Vol. I, Leipzig, J. A. Barth, 1909, pp.
  316--402, 07 2003.

\bibitem[Car07]{Car07}
P.~Carmona.
\newblock Existence and uniqueness of an invariant measure for a chain of
  oscillators in contact with two heat baths.
\newblock {\em Stochastic Process. Appl.}, 117(8):1076--1092, 2007.

\bibitem[CEHRB18]{CEHRB18}
N.~Cuneo, J.-P. Eckmann, M.~Hairer, and L.~Rey-Bellet.
\newblock Non-equilibrium steady states for networks of oscillators.
\newblock {\em Electron. J. Probab.}, 23:28 pp., 2018.

\bibitem[CEL{\etalchar{+}}18]{CELMM18}
E.~Carlen, R.~Esposito, J.~Lebowitz, R.~Marra, and C.~Mouhot.
\newblock Approach to the steady state in kinetic models with thermal
  reservoirs at different temperatures.
\newblock {\em J. Stat. Phys.}, 172(2):522--543, 2018.

\bibitem[CEL{\etalchar{+}}19]{CELMM19}
E.~Carlen, R.~Esposito, J.~Lebowitz, R.~Marra, and C.~Mouhot.
\newblock Uniqueness of the non-equilibrium steady state for a 1d bgk model in
  kinetic theory.
\newblock {\em Acta Appl Math}, 2019.

\bibitem[CLM15]{CLM15}
E.~Carlen, J.~Lebowitz, and C.~Mouhot.
\newblock Exponential approach to, and properties of, a non-equilibrium steady
  state in a dilute gas.
\newblock {\em Braz. J. Probab. Stat.}, 29(2):372--386, 2015.

\bibitem[CRS15]{CalvRaoulSchm15}
Vincent Calvez, Ga{\"e}l Raoul, and Christian Schmeiser.
\newblock Confinement by biased velocity jumps: aggregation of
  \emph{Escherichia coli}.
\newblock {\em Kinet. Relat. Models}, 8(4):651--666, 2015.

\bibitem[Dha08]{Dhar08}
A.~Dhar.
\newblock Heat transport in low-dimensional systems.
\newblock {\em Advances in Physics}, 57, 08 2008.

\bibitem[DMS15]{DMS15}
J.~Dolbeault, C.~Mouhot, and C.~Schmeiser.
\newblock Hypocoercivity for linear kinetic equations conserving mass.
\newblock {\em Trans. Amer. Math. Soc.}, 367(6):3807--3828, 2015.

\bibitem[EBS]{BFLS20}
A.~Levitt E.~Bernard, M.~Fathi and G.~Stoltz.
\newblock Hypocoercivity with schur complements.
\newblock https://arxiv.org/pdf/2003.00726.pdf. 2020.

\bibitem[EGKM13]{EGKM13}
R.~Esposito, Y.~Guo, C.~Kim, and R.~Marra.
\newblock Non-isothermal boundary in the {B}oltzmann theory and {F}ourier law.
\newblock {\em Comm. Math. Phys.}, 323(1):177--239, 2013.

\bibitem[ELM94]{ELM94}
R.~Esposito, J.~Lebowitz, and R.~Marra.
\newblock Hydrodynamic limit of the stationary {B}oltzmann equation in a slab.
\newblock {\em Comm. Math. Phys.}, 160(1):49--80, 1994.

\bibitem[ELM95]{ELM95}
R.~Esposito, J.~Lebowitz, and R.~Marra.
\newblock The {N}avier-{S}tokes limit of stationary solutions of the nonlinear
  {B}oltzmann equation.
\newblock volume~78, pages 389--412. Springer, Berlin, 1995.
\newblock Papers dedicated to the memory of Lars Onsager.

\bibitem[EM20]{EspositoMarra20}
R.~Esposito and R.~Marra.
\newblock Stationary non equilibrium states in kinetic theory.
\newblock {\em Journal of Statistical Physics}, 180(1):773--809, 2020.

\bibitem[EM21]{EM21}
J.~Evans and A.~Menegaki.
\newblock Existence of a nonequilibrium steady state for the nonlinear {BGK}
  equation on an interval.
\newblock {\em Pure Appl. Anal.}, 3(1):223--252, 2021.

\bibitem[EPRB99]{EPR99b}
J.-P. Eckmann, C.-A. Pillet, and L.~Rey-Bellet.
\newblock Non-equilibrium statistical mechanics of anharmonic chains coupled to
  two heat baths at different temperatures.
\newblock {\em Comm. Math. Phys.}, 201(3):657--697, 1999.

\bibitem[Eva16]{E16}
J.~Evans.
\newblock Non-equilibrium steady states in {K}ac's model coupled to a
  thermostat.
\newblock {\em J. Stat. Phys.}, 164(5):1103--1121, 2016.

\bibitem[FB19]{BF19}
P.~Flandrin and C.~Bernardin, editors.
\newblock {\em Fourier and the {S}cience of {T}oday / {F}ourier et la {S}cience
  d'aujourd'hui}, volume 20, Issue 5.
\newblock Comptes Rendus Physique, 2019.

\bibitem[GKI85]{GoldsteinKipnisIaniro85}
S.~{Goldsterin}, C.~{Kipnis}, and N.~{Ianiro}.
\newblock {Stationary states for a mechanical system with stochastic boundary
  conditions}.
\newblock {\em Journal of Statistical Physics}, 41(5-6):915--939, December
  1985.

\bibitem[GMM17]{GMM17}
M.~P. Gualdani, S.~Mischler, and C.~Mouhot.
\newblock Factorization of non-symmetric operators and exponential
  {$H$}-theorem.
\newblock {\em M\'{e}m. Soc. Math. Fr. (N.S.)}, (153):137, 2017.

\bibitem[Gui70]{Guiraud70}
J.~P. Guiraud.
\newblock Probl{\`e}me aux limites int{\'e}rieur pour l'{\'e}quation de
  {Boltzmann} lin{\'e}aire.
\newblock {\em Journal de M{\'e}canique}, 9:443--490, 1970.

\bibitem[Gui72]{Guiraud72}
Jean-Pierre Guiraud.
\newblock Probl{\`e}me aux limites interieur pour l'{\'e}quation de boltzmann
  en regime stationnaire, faiblement non lin{\'e}aire.
\newblock {\em J. M{\'e}c., Paris}, (11):183--231, 1972.

\bibitem[H\'06]{Herau06}
F.~H\'{e}rau.
\newblock Hypocoercivity and exponential time decay for the linear
  inhomogeneous relaxation {B}oltzmann equation.
\newblock {\em Asymptot. Anal.}, 46(3-4):349--359, 2006.

\bibitem[Hai09]{Hair09}
M.~Hairer.
\newblock How hot can a heat bath get?
\newblock {\em Comm. Math. Phys.}, 292(1):131--177, 2009.

\bibitem[HM09]{HM09}
M.~Hairer and J.~C. Mattingly.
\newblock Slow energy dissipation in anharmonic oscillator chains.
\newblock {\em Comm. Pure Appl. Math.}, 62(8):999--1032, 2009.

\bibitem[Lep16]{Lep16}
S.~Lepri.
\newblock {\em Thermal Transport in Low Dimensions: From Statistical Physics to
  Nanoscale Heat Transfer}, volume 921.
\newblock 01 2016.

\bibitem[Max67]{Maxwell1867}
J.~Maxwell.
\newblock On the dynamical theory of gases.
\newblock {\em Philosophical Transactions of the Royal Society of London},
  157:49--88, 1867.

\bibitem[Men20]{Me20}
A.~Menegaki.
\newblock Quantitative {R}ates of {C}onvergence to {N}on-equilibrium {S}teady
  {S}tate for a {W}eakly {A}nharmonic {C}hain of {O}scillators.
\newblock {\em J. Stat. Phys.}, 181(1):53--94, 2020.

\bibitem[RBT02]{RBT02}
L.~Rey-Bellet and L.~E. Thomas.
\newblock Exponential convergence to non-equilibrium stationary states in
  classical statistical mechanics.
\newblock {\em Comm. Math. Phys.}, 225(2):305--329, 2002.

\bibitem[RLL67]{RLL67}
Z.~Rieder, J.~L. Lebowitz, and E.~Lieb.
\newblock Properties of a harmonic crystal in a stationary nonequilibrium
  state.
\newblock {\em Journal of Mathematical Physics}, 8(5):1073--1078, 1967.

\bibitem[Uka92]{U92}
S.~Ukai.
\newblock Stationary solutions of the {BGK} model equation on a finite interval
  with large boundary data.
\newblock In {\em Proceedings of the {F}ourth {I}nternational {W}orkshop on
  {M}athematical {A}spects of {F}luid and {P}lasma {D}ynamics ({K}yoto, 1991)},
  pages 487--500, 1992.

\bibitem[Vil09]{Villani09}
C.~Villani.
\newblock Hypocoercivity.
\newblock {\em Mem. Amer. Math. Soc.}, 202(950):iv+141, 2009.

\end{thebibliography}

\end{document}